\theoremstyle{plain} 
\newtheorem{theorem}{Theorem}[section] 
\newtheorem{corollary}[theorem]{Corollary} 
\newtheorem{proposition}[theorem]{Proposition}
\newtheorem{lemma}[theorem]{Lemma}
\theoremstyle{definition} 
\newtheorem{definition}[theorem]{Definition}
\theoremstyle{remark} 
\newtheorem{notation}[theorem]{Notation}
\newtheorem{remark}[theorem]{Remark}
\newtheorem{porism}[theorem]{Porism}
\newtheorem{fact}[theorem]{Fact}
\numberwithin{equation}{section}
\newcommand{\Cyl}[1]{\,\,\,\left\ldbrack{#1}\right\rdbrack}
\newcommand{\seq}[1]{{\left\langle{#1}\right\rangle}}
\newcommand{\rest}[1]{\! \upharpoonright_{#1}} 
\newcommand{\conc}{\hat{\,\,}}
\newcommand{\andd}{\,\,\,\&\,\,\,}
\DeclareMathOperator{\dom}{dom}
\DeclareMathOperator{\range}{range}
\DeclareMathOperator{\id}{id}
\DeclareMathOperator{\cf}{cf}
\newcommand{\w}{\omega}
\newcommand{\s}{\sigma}
\renewcommand{\le}{\leqslant}
\renewcommand{\ge}{\geqslant}
\renewcommand{\preceq}{\preccurlyeq}
\newcommand{\vphi}{\varphi}
\newcommand\force{\Vdash}
\newcommand \compembed{\lessdot}
\newcommand \elementary{\prec}
\newcommand \Powerset{{\mathcal P}}
\newcommand{\meet}{\wedge}
\newcommand{\join}{\vee}
\newcommand \uu{\mathbf u}
\newcommand \PP{\mathbb P}
\newcommand \QQ{\mathbb Q}
\newcommand \RR{\mathbb R}
\newcommand \CC{\mathbb C}
\newcommand \BB{\mathbb B}
\newcommand \SSS{\mathbb S}
\newcommand \R{\mathbb R}
\DeclareMathOperator{\Ult}{Ult}
\newcommand{\PPP}{\mathfrak P}
\newcommand{\Cohen}{\CC}
\newcommand{\starr}{\!*}
\newcommand{\Shelah}{\texttt{\textup{Sh}}}
\DeclareMathOperator{\supp}{supp}
\newcommand{\funnyDiamond}{\hat\diamond}
\newcommand{\iplus}{\ltimes}
\title{Models of Cohen measurability}
\author{Noam Greenberg} 
\address{Department of Mathematics, Victoria University of Wellington, Wellington, New Zealand}
\email{greenberg@msor.vuw.ac.nz}
\urladdr{\url{http://homepages.mcs.vuw.ac.nz/~greenberg/}}
\author{Saharon Shelah}
\email{shelah@math.huji.ac.il}
\address{The Hebrew University of Jerusalem \\ Einstein Institute of
Mathematics \\ Edmond J. Safra Campus, Givat Ram \\ Jerusalem 91904,
Israel}
\address{Department of Mathematics \\ Hill Center-Busch Campus \\ Rutgers,
The State University of New Jersey \\ 110 Frelinghuysen Road
\\ Piscataway, NJ 08854-8019 USA}
\thanks{Greenberg was supported by a Rutherford Discovery Fellowship from the Royal Society of New Zealand, and by the Turing Centenary research project "Mind,
Mechanism and Mathematics", funded by the John Templeton Foundation. Shelah would like to thank the Israel Science Foundation for
partial support of this research (Grant no. 1053/11). 
Publication 1039 on his list.}
\begin{document}

\maketitle

\begin{abstract}
	We show that in contrast with the Cohen version of Solovay's model, it is consistent for the continuum to be Cohen-measurable and for every function to be continuous on a non-meagre set. 
\end{abstract}

\section{Introduction}

In~\cite{GitikShelah}, Gitik and Shelah answered a question of Fremlin's (\cite[P1]{Fremlin}). They showed that it is possible to construct a model of set theory in which the continuum is real-valued measurable in a way that is different from Solovay's original construction of such a model  (\cite{Solovay}). Solovay used a measurable-length sequence of random reals. Fremlin's general question is, what properties of Solovay's model are artefacts of the construction, and which follow from the fact that the continuum is real-valued measurable. The paper~\cite{FGS} extends this line of investigation. It gives yet another construction of a model with the continuum being real-valued measurable, and isolates a measure-theoretic property which differentiates between this model and Solovay's. 

It is natural to ask what happens when measure is replaced by category. The analogue of Solovay's forcing is the addition of a measurable-length sequence (or more) of Cohen reals, equivalently forcing with the open subsets of the product space $2^\kappa$ for $\kappa$ measurable. The analogue of real-valued measurability, which holds in this model is the following. 

\begin{definition}\label{def:RVC}
	A cardinal $\kappa$ is \emph{Cohen measurable} if there is a $\kappa$-complete ideal $I$ on $\kappa$ such that $\Powerset(\kappa)/I$ is isomorphic to a Cohen algebra.
\end{definition}
Here a \emph{Cohen algebra} is the completion of a notion of forcing adding a certain amount of Cohen reals (the finite support product of copies of $2^{<\w}$), equivalently, the Boolean algebra of regular open subsets of a space $2^X$ for some $X$.

\smallskip

One would expect that a modification of the notion of forcing from~\cite{FGS} would yield a model in which the continuum is Cohen measurable. This follows the intuition that category is easier than measure. It turns out however that this is not easily done; that construction heavily relies on the existence of measures on measure algebras. In other words, with category we have fewer tools because we cannot say ``how much more or less meagre'' is one open set compared to another; there is no real number value that can answer such a question. 

In this paper we give a new construction of a model in which the continuum is Cohen measurable. Rather than drawing on~\cite{FGS}, we adopt a technique from~\cite{Sh473}. The simplicity of category compared to measure is reflected in the kind of difference between the new model and the model obtained by adding a sequence of Cohen reals. Whereas the statement in~\cite{FGS} was somewhat ad-hoc, we now obtain the natural property of the continuum, which was first shown to be consistent in~\cite{Sh473}.

\begin{theorem}\label{thm:main}
	Let $\kappa$ be a measurable cardinal such that $2^\kappa = \kappa^+$. Then in a forcing extension, $\kappa = 2^{\aleph_0}$ is Cohen measurable, and every function $f\colon 2^\w \to 2^\w$ is continuous on a non-meagre set. 
\end{theorem}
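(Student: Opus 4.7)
The plan is to build a single forcing notion $\PP$, guided by a normal ultrafilter $U$ on $\kappa$, that simultaneously forces $2^{\aleph_0}=\kappa$ to be Cohen-measurable and forces every $f\colon 2^\w\to 2^\w$ to be continuous on a non-meagre set. Following the technique of~\cite{Sh473} alluded to in the introduction, conditions in $\PP$ should be pairs $(s,T)$ (or trees of such pairs) where $s$ is a finite trunk committing finite information about a $\kappa$-sequence of Cohen reals, and $T$ restricts further extensions so that the splitting at each level of $T$ lies in a $U$-large set. The ordering should factor cleanly into a \emph{pure} part (fixing $s$ and shrinking $T$) and a \emph{trunk} part (extending $s$ inside $T$), analogous to Prikry-style forcings, so that a fusion argument over pure extensions is available while the trunk part still generates a Cohen subalgebra.

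For the Cohen-measurability half, I would use the elementary embedding $j\colon V\to M$ induced by $U$. The $\kappa^+$-chain condition (obtained from $2^\kappa=\kappa^+$) together with the structure of trunks should give that the regular open completion of $\PP$ has weight $\kappa$ and is, after a suitable projection, a Cohen algebra. Using a standard factorisation $j(\PP)\simeq\PP\starr\dot\QQ$ and the generic-ultrapower recipe, one defines in $V[G]$ an ideal $I$ on $\kappa$ by $A\in I$ iff the quotient forcing $\dot\QQ^G$ forces $\kappa\notin j(A)$. Routine arguments about generic ultrapowers should then yield that $I$ is $\kappa$-complete and that $\Powerset(\kappa)/I$ is isomorphic to the completion of $\dot\QQ^G$, which by design of $\PP$ is a Cohen algebra; this verifies Definition~\ref{def:RVC} for $\kappa$ in $V[G]$.

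For the continuity property, the main tool is fusion. Given a $\PP$-name $\dot f$ for a function $2^\w\to 2^\w$ and a condition $(s,T)$, I would recursively construct a fusion sequence $(s_n,T_n)$ of pure extensions in which at stage $n$ the $U$-large splitting of $T_n$ is thinned so that $\dot f$ is decided up to precision $n$ on all branches of $T_n$ of sufficient length. The limit condition then forces $\dot f$ to be continuous on the closed set $C\subseteq 2^\w$ coded by the fusion tree; non-meagreness of $C$ in $V[G]$ should follow because the trunks contribute generic Cohen reals meeting every meagre Borel code from the ground model, while $\kappa$-completeness of $I$ (combined with a reflection argument for the Borel codes added by $\PP$) rules out covering $C$ by countably many meagre sets computed in $V[G]$.

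The main obstacle, I expect, is reconciling these two halves inside a single forcing. The finite-support product of $\kappa$ many Cohen forcings (the naive ``Cohen analogue of Solovay's model'') already gives Cohen measurability but destroys the continuity-on-a-non-meagre-set property, while the fusion-tree forcings of~\cite{Sh473} give the continuity property but their regular open completions are not obviously Cohen algebras, since the tree structure records ultrafilter information that must be factored out. The delicate point lies in choosing the pure/trunk split so that the trunk part generates a Cohen algebra of weight $\kappa$ while the pure part carries enough fusion strength to handle arbitrary names for functions $2^\w\to 2^\w$; verifying that the ideal $I$ defined above is genuinely $\kappa$-complete, together with checking that the resulting quotient $\Powerset(\kappa)/I$ lands on the nose in a Cohen algebra (rather than some incomparable ccc complete Boolean algebra), is where I expect most of the technical work to lie.
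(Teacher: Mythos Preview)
Your proposal takes a fundamentally different route from the paper's, and it contains a genuine gap at its core.

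The paper does not build a single Prikry-style tree forcing. It proceeds in two stages. First, a preparatory forcing produces a model $W$ in which $\kappa$ remains measurable, there is a stationary $S\subseteq S^\kappa_{\aleph_1}$ on which a weak diamond principle $\funnyDiamond(S)$ holds, and, crucially, for some normal ultrafilter embedding $j$ the set $j(S)$ reflects nowhere in $(\kappa,j(\kappa)]$. Second, in $W$, one forces with a finite-support iteration $\bar\PP$ of length $\kappa$ which is literally Cohen at every coordinate outside $S$; at coordinates $\delta\in S$ one uses a carefully chosen instance of Shelah's forcing from~\cite{Sh473}, selected via the $\funnyDiamond(S)$-sequence so as to handle a guessed name for a function $2^\w\to 2^\w$. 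Cohen measurability of $\kappa$ in $W^{\PP_\kappa}$ then reduces, via a ``restricted memory'' analysis of the iteration, to showing that $j(\PP_\kappa)/\PP_\kappa$ is a Cohen algebra, and this in turn follows from the non-reflection of $j(S)$ in $(\kappa,j(\kappa)]$ arranged by the preparation.

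Your approach has two concrete problems. First, there is no argument --- and I do not see one --- that for a tree forcing with $U$-large splitting the quotient $j(\PP)/\PP$ is a Cohen algebra. Prikry-type forcings carry ultrafilter data that has no reason to factor out cleanly, and you yourself flag this as ``where most of the technical work lies'' without supplying any of it. Second, the fusion you sketch produces a closed set $C$ on which $f$ is continuous, but ground-model-coded closed sets are typically made meagre by the very Cohen reals your trunk part is supposed to add; the non-meagreness must survive into the final model, and nothing in your outline explains why it would. The paper solves both issues structurally: the iteration is Cohen off the thin set $S$, so the quotient above $\kappa$ is Cohen once $j(S)\cap(\kappa,j(\kappa)]$ is non-reflecting; and non-meagreness of the set produced by Shelah's forcing at stage $\delta$ is preserved to stage $\kappa$ because the remaining tail over any $\uu$-straight set is again a Cohen extension (Lemma~\ref{lem:meagreness_preserved}).
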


In contrast, in~\cite{Sh473}, Shelah showed that in a model obtained by adding Cohen reals, some function from $2^\w$ to $2^\w$ will not be continuous on any non-meagre set.

\subsection{Proof of Theorem~\ref{thm:main}}
%
%
%
%
%
%
The general idea is to use a finite-support forcing iteration $\bar \PP= \seq{\PP_\alpha,\QQ_\alpha}$ of length $\kappa$ (where $\kappa$ is a measurable cardinal) which is ``mostly Cohen''. We will specify a stationary subset $S$ of $S^\kappa_{\aleph_1} = \{ \alpha<\kappa\,:\, \cf(\alpha)  = \aleph_1\}$. This will be the set of locations $\alpha$ at which we can choose $\QQ_\alpha$ to be a notion of forcing other than Cohen forcing. The intention is to use a carefully chosen variant of Shelah's notion of forcing from~\cite{Sh473}, to add instances of continuity on a non-meagre set. Exactly how we choose them will be determined using a diamond-like sequence on $S$. (We will not be able to ensure the full diamond on~$S$, so we will define a weaker combinatorial notion $\funnyDiamond(S)$ (Definition~\ref{def:funnyDiamond}) which we suse instead.) The guessing power of the $\funnyDiamond(S)$-sequence will be sufficient to guess, for each function $F\colon 2^\w\to 2^\w$ in $V^{\PP_\kappa}$, sufficiently much about $F$, so that at some point $\alpha \in S$, $\QQ_\alpha$ will add both the definition $\Psi$ of a continuous function, and a non-meagre set $A$ on which $F$ will equal $\Psi$. The fact that the iteration $\bar \PP$ is mostly Cohen will be also used to show that the non-meagreness of $A$ is preserved from step $\alpha+1$ all the way up to step $\kappa$. Further, $S$ will be made sufficiently sparse, so that the deviations on $S$ from Cohen forcing do not aggregate too badly to prevent us from making $\kappa$ Cohen measurable. One aspect of this is that $\QQ_\alpha$ will be determined by only few of the~$\QQ_\beta$ ($\beta<\alpha$). We will call these ``iterations with restricted memory''. 

The proper definition of what we do at steps $\alpha\in S$ actually relies on a structural analysis of what we have done up to that stage. For that reason we define for each ordinal $\delta$, classes $\PPP_\delta(S)$ of forcing iterations of length $\delta$ which \emph{could} be the one chosen up to stage $\delta$. Once we develop the general theory of these iterations, we can then use the $\funnyDiamond(S)$ sequence and give non-circular instructions at each step, how to choose the next~$\QQ_\delta$. In the construction of $\bar \PP$ we will only use the fixed stationary set $S\subset \kappa$ and naturally use only ordinals $\delta\le \kappa$. However in the verification that in $V^{\PP_\kappa}$, $\kappa$ is Cohen measurable, we also need to consider the extension of $\bar \PP$ by an elementary embedding $j$ witnessing the measurability of $\kappa$. In particular we will need to consider $\PPP_{j(\kappa)}(j(S))$, where in our ambient universe $j(\kappa)$ will not even be inaccessible and $j(S)$ will not be a stationary subset of $j(\kappa)$. Hence we will give a general definition of the classes $\PPP_\delta(S)$, for any ordinal $\delta$ and any subset $S\subseteq S^\delta_{\aleph_1}$ (Definition~\ref{def:S-iteration}). The restriction $S\subset \delta$ does not conflict with the plan for the recursive definition of the eventual $\PP$; for $S\subset \kappa$ and $\delta<\kappa$, $\PPP_\delta(S)$ will only depend on $S\cap \delta$. 

\medskip

We will show the following. For~\eqref{item:Cohen} below, we say that a set $S$ \emph{reflects no-where} at a set of ordinals $A$ if for all limit $\gamma\in A$ of uncountable cofinality, $S\cap \gamma$ is not stationary in $\gamma$. We note that if $S\subseteq S^{\delta}_{\aleph_1}$ and $\gamma\le \delta$ has countable cofinality then there is an $\w$-sequence cofinal in $\gamma$ and disjoint from $S$, so ``non-reflection'' at $\gamma$ is automatic.

\begin{proposition}\label{prop:main} Let $\delta$ be an ordinal and let $S\subseteq S^\delta_{\aleph_1}$. 
	\begin{enumerate}
	 	\item \label{item:absoluteness}
		Suppose that $V\subseteq W$ are transitive models of set theory, and that $\aleph_1^V = \aleph_1^W$. Then membership in $\PPP_\delta(S)$ is upward absolute between $V$ and $W$: $\left(\PPP_\delta(S)\right)^V\subseteq \left(\PPP_\delta(S)\right)^W$. 
		\item \label{item:size_of_forcing}
		If $\delta$ is an inaccessible cardinal, then for all $\bar \PP\in \PPP_\delta(S)$:
		\begin{enumerate}
			\item \label{itemm:size_of_forcing}
			$\PP_{\delta}\subset H_\delta$. 
			\item \label{itemm:size_of_continuum}
			In $V^{\PP_\delta}$, $\delta = 2^{\aleph_0}$. 
		\end{enumerate} 
		\item \label{item:continuity}
		If $\delta$ is an inaccessible cardinal	and $\funnyDiamond(S)$ holds, then there is some $\bar \PP\in \PPP_\delta(S)$ such that in $V^{\PP_\delta}$, every function $f\colon 2^\w\to 2^\w$ is continuous on a non-meagre set. 
		\item \label{item:Cohen}
		If $\alpha<\delta$, $\alpha\notin S$ and $S$ reflects no-where in the interval $(\alpha,\delta]$, Then for all $\bar \PP\in \PPP_\delta(S)$, $\PP_\delta/\PP_\alpha$ is equivalent to a Cohen algebra. 
		\item \label{item:preparation}
		If $\kappa$ is a measurable cardinal and $2^\kappa = \kappa^+$, then there is a forcing extension $W$ of $V$, preserving the measurability of $\kappa$, in which there is a stationary subset $S\subseteq S^\kappa_{\aleph_1}$ and a normal ultrafilter embedding $j\colon W\to N$ with critical point $\kappa$ such that:
		\begin{enumerate}
			\item $\funnyDiamond(S)$ holds; and
			\item \label{itemm:no_reflection_home} in $W$, $j(S)$ reflects no-where in the interval $(\kappa,j(\kappa)]$. 
		\end{enumerate}
	\end{enumerate}
\end{proposition}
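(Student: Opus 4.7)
The plan is to address the five parts in the order (1), (2), (4), (3), (5), since the later parts build on the earlier structural facts. I expect part (4)---showing that the tail $\PP_\delta/\PP_\alpha$ is a Cohen algebra whenever $\alpha\notin S$ and $S$ reflects no-where in $(\alpha,\delta]$---to be the main obstacle, since it is this requirement that forces the definition of $\PPP_\delta(S)$, and in particular of the allowed $\QQ_\alpha$ for $\alpha\in S$, to be sufficiently restrictive.

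For (1), I would proceed by induction on $\delta$ following the recursive definition of $\PPP_\delta(S)$. The stipulations at each stage (finite-support iteration, the permitted form of $\QQ_\alpha$, the restricted-memory constraint) are first-order over the previously constructed $\PP_\beta$'s, and the internal characterisations of Cohen forcing and of the Sh473-style variant used at $\alpha\in S$ transfer upward provided $\aleph_1$ is preserved. For (2), inaccessibility of $\delta$ together with $|\QQ_\alpha|<\delta$ at every $\alpha<\delta$ and the finite-support nature of the iteration gives $|\PP_\delta|=\delta$ and hence $\PP_\delta\subset H_\delta$; the equality $2^{\aleph_0}=\delta$ in $V^{\PP_\delta}$ then follows from cofinally many Cohen steps yielding $2^{\aleph_0}\ge\delta$ and a nice-name count yielding the reverse inequality.

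For (4), I would induct on $\delta$. The successor case $\delta=\beta+1$ with $\beta\notin S$ is immediate, since we are adjoining Cohen forcing to a Cohen algebra; for $\delta=\beta+1$ with $\beta\in S$, the restricted-memory assumption on $\QQ_\beta$ should allow $\PP_\beta/\PP_\alpha$ to be decomposed into a small piece carrying the finitely many $\QQ_\gamma$'s that $\QQ_\beta$ depends on together with a pure Cohen piece, and the Sh473-style variant should be designed so that its combination with the small piece is still a Cohen algebra. At a limit stage $\delta$ of countable cofinality, pick a cofinal $\w$-sequence through $\delta$ disjoint from $S$ (possible since $S\subseteq S^\delta_{\aleph_1}$) with initial point $\alpha$, and apply the inductive hypothesis to each sub-interval; finite-support direct limits of Cohen algebras are Cohen. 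At a limit stage of uncountable cofinality, the assumed non-reflection of $S$ in $(\alpha,\delta]$ provides a club $C\subseteq\delta$ above $\alpha$ disjoint from $S$, reducing us to the previous situation.

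For (3), using $\funnyDiamond(S)$ as a bookkeeping and guessing device, at each $\alpha\in S$ the diamond sequence offers a candidate pair consisting of a name for a function $F\colon 2^\w\to 2^\w$ and of a continuous partial approximation; we let $\QQ_\alpha$ be the Sh473-style forcing that adds a non-meagre $A$ and a continuous $\Psi$ with $F\rest A=\Psi\rest A$. The guessing strength of $\funnyDiamond(S)$ is designed precisely so that every such $F$ is guessed correctly at some $\alpha\in S$; preservation of non-meagreness of $A$ from stage $\alpha+1$ to the final model follows from (4) applied at $\alpha+1$ (whose tail is a Cohen algebra) together with the standard fact that Cohen forcing preserves non-meagre sets. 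For (5), I would use a Laver-style preparation to make the measurability of $\kappa$ indestructible under $\kappa$-directed-closed forcing, and then perform a $\kappa$-directed-closed iteration that shoots a non-reflecting stationary $S\subseteq S^\kappa_{\aleph_1}$ and adds a $\funnyDiamond(S)$-sequence; lifting the embedding through this preparation yields the desired $j\colon W\to N$, and non-reflection of $j(S)$ in $(\kappa,j(\kappa)]$ in $W$ follows from elementarity below $j(\kappa)$ together with the agreement between $W$ and $N$ on clubs and stationary sets in the relevant range, which is itself a consequence of the closure of $N$ under $\kappa$-sequences secured by the Laver preparation.
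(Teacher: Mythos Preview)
Your outline for (1), (2) and the broad shape of (4) matches the paper. Two corrections on (4): the memory $\uu_\beta$ of $\QQ_\beta$ for $\beta\in S$ has size $\le\aleph_1$, not finite; and the paper proves a strictly stronger statement (Proposition~\ref{prop:main_Cohen_propagation}), namely that $\PP_v/\PP_u$ is Cohen whenever $u\preceq_\uu v$ are $\uu$-closed and $S$ does not reflect at limit points of $v\setminus u$. This generality, not just the case $u=\alpha$, is what is actually used in (3).

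\textbf{Part (3): two real gaps.} First, you cannot invoke (4) at $\alpha+1$: its hypothesis requires $S$ to reflect nowhere in $(\alpha+1,\kappa]$, and nothing prevents $S$ from reflecting at inaccessibles below $\kappa$. So $\PP_\kappa/\PP_{\alpha+1}$ is in general \emph{not} a Cohen algebra. Second, ``Cohen forcing preserves non-meagre sets'' is false---one Cohen real makes the ground-model reals meagre. The paper's preservation argument (Lemmas~\ref{lem:one-step_meagreness}--\ref{lem:meagreness_preserved}) runs differently: the non-meagre $A$ of size $\aleph_1$ lives in $V^{\PP_v}$ for a $\uu$-\emph{straight} $v$; if some $p\in\PP_\kappa$ forced $A$ meagre, an elementary-submodel argument captures $p$ and the witnessing nowhere-dense trees in $\PP_w$ for a $\uu$-closed $w\supseteq v$ with $|w\setminus v|\le\aleph_1$, and it is the general version of (4) that makes $\PP_w/\PP_v$ Cohen. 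Separately, $\funnyDiamond(S)$ cannot guess the global $\PP_\kappa$-name for $F$; what it guesses is, after a pigeonhole/homogenisation step, a sequence of local data $(w_i,j_i,\eta_i,\ldots)$ encoding $F(\rho_{\alpha_i})$ for cofinally many Cohen coordinates $\alpha_i$ via fixed dense embeddings into a common Cohen algebra (Lemma~\ref{lem:main_step}).

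\textbf{Part (5): wrong mechanism.} Laver preparation produces indestructibility of \emph{supercompactness}; there is no analogous one-shot preparation for a merely measurable $\kappa$, and the paper's $\QQ_\kappa$ adding $S$ with its $\funnyDiamond$-sequence is only strategically closed on Easton blocks, not $\kappa$-directed closed. More seriously, non-reflection of $j(S)$ in $(\kappa,j(\kappa)]$ in $W$ does \emph{not} follow from elementarity plus $\kappa$-closure of $N$. By elementarity $j(S)$ is stationary in $j(\kappa)$ in $N$, so $N$ has no club of $j(\kappa)$ disjoint from $j(S)$; and for $\gamma\in(\kappa,j(\kappa))$, $W$ and $N$ need not agree on clubs of $\gamma$ since $\cf^W(\gamma)$ can exceed $\kappa$. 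The paper instead defines, for each inaccessible $\lambda<\kappa$, a club-shooting poset $\SSS_\lambda$ (over $\QQ_\kappa$) that kills reflection of $S$ in $(\lambda,\kappa]$ and is ${<}\lambda^{\iplus}$-distributive; averaging over the normal measure yields $\SSS\in N$ which is ${<}\kappa^+$-distributive in $V^{\QQ_\kappa}$, and $W=V^{\QQ_\kappa\starr\SSS}$. The clubs witnessing non-reflection of $j(S)$ in $(\kappa,j(\kappa)]$ are then added generically, not inherited from $N$.
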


Note that \eqref{itemm:no_reflection_home} means that for $\delta\in (\kappa,j(\kappa)]$ of uncountable cofinality, there is, in $W$, a club of $\delta$ disjoint from $S$. Such a club will often not exist in $N$. 

\medskip

Theorem~\ref{thm:main} is then proved as follows. Obtain a forcing extension $W$ given by~\eqref{item:preparation} of the proposition, and work in $W$. Pick an iteration $\bar \PP\in \PPP_\kappa(S)$ given by~\eqref{item:continuity}. The desired model is $W^{\PP_\kappa}$. By~\eqref{itemm:size_of_continuum}, in $W^{\PP_\kappa}$, $\kappa = 2^{\aleph_0}$; and by~\eqref{item:continuity}, in $W^{\PP_\kappa}$, every function $f\colon 2^\w\to 2^\w$ is continuous on a non-meagre set. 

Since $\PP_\kappa\subset H_\kappa$ \eqref{itemm:size_of_forcing}, $j\rest{\PP_\kappa}$ is the identity on $\PP_\kappa$ and the iteration $j(\bar \PP)$ is an extension of the iteration $\bar \PP$, so for $\alpha\le j(\kappa)$ we write $\PP_\alpha$ for $j(\PP)_\alpha$; we note that $j(\PP_\kappa) = \PP_{j(\kappa)}$. We can conclude that $\PP_\kappa\compembed \PP_{j(\kappa)}$. Now in $N$, $j(\bar \PP)\in \PPP_{j(\kappa)}(j(S))$, so by~\eqref{item:absoluteness}, $j(\bar \PP)\in \PPP_{j(\kappa)}(j(S))$ in $W$ as well. Since $\kappa$ is regular in~$W$ and $j(S)\subseteq S^{j(\kappa)}_{\aleph_1}$, $\kappa\notin j(S)$. Since~$j(S)$ reflects no-where in the interval $(\kappa,j(\kappa)]$, by applying~\eqref{item:Cohen} in~$W$ to~$\kappa$, $j(\kappa)$, $j(S)$ and $j(\bar \PP)$, we see that in $W^{\PP_{\kappa}}$, $\PP_{j(\kappa)}/\PP_\kappa$ is equivalent to a Cohen algebra. However, as is well known (but see Proposition~\ref{prop:jP_is_what_we_need} for completeness), because $j$ is a normal ultrafilter embedding, in $W^{\PP_\kappa}$ there is a $\kappa$-complete ideal~$I$ such that $\Powerset(\kappa)/I$ is isomorphic to the completion of $\PP_{j(\kappa)}/\PP_\kappa$, and hence to a Cohen algebra. Thus in $W^{\PP_\kappa}$, $\kappa$ is Cohen measurable. This completes the proof of Theorem~\ref{thm:main}.

\subsection{Structure of the paper}

In Section~\ref{sec:preliminaries} we settle notation, give basic definitions and recall some facts about forcing iterations, equivalence to Cohen algebras, and the forcing from~\cite{Sh473}, which as we mentioned will be one of the important ingredients of this paper. 

In Section~\ref{sec:restricted_iterations} we define a broad class of forcing iterations, from which elements of the collections $\PPP_\delta(S)$ will be taken. These are the iterations with ``restricted memory''. 

In Section~\ref{sec:S-iterations} we define the classes $\PPP_\delta(S)$. We easily observe that~\eqref{item:absoluteness} and~\eqref{item:size_of_forcing} of Proposition~\ref{prop:main} hold. In this section we also prove~\eqref{item:Cohen}.

In Section~\ref{sec:toward_measurability} we deviate a little from the proof of Theorem~\ref{thm:main}. Apart from giving a proof of the isomorphism of $j(\PP)/\PP$ and $\Powerset(\kappa)/I$ (Proposition~\ref{prop:jP_is_what_we_need}), which is mostly given for completeness, we show how to obtain the conclusion of Theorem~\ref{thm:main} if we are willing to start with a supercompact cardinal, equipped with a suitable stationary subset. In this case we do not need the preparation forcing which gives us~\eqref{item:preparation}. 

In Section~\ref{sec:continuity} we Define $\funnyDiamond(S)$ and prove~\eqref{item:continuity} of Proposition~\ref{prop:main}. 

In Section~\ref{sec:consistency} we prove~\eqref{item:preparation} of Proposition~\ref{prop:main}.

\section{Preliminaries}	 \label{sec:preliminaries}

We fix some notation and recall some basics.

\subsection{Complete embeddings}

Let $\PP\subseteq \QQ$ be partial orderings. A \emph{restriction} of~$\QQ$ to~$\PP$ is a function $i\colon \QQ\to \PP$ such that: (1) $i$ is order-preserving; (2) for all $q\in \QQ$, $q\le i(q)$; (3)~$i\rest{\PP} = \id_{\PP}$; and (4) for all $q\in \QQ$, every $p\le i(q)$ in $\PP$ is compatible with~$q$ in~$\QQ$. 

Note that if there is a restriction of~$\QQ$ to~$\PP$ then for all $p,q\in \PP$, $p\perp_\PP q$ if and only if $p\perp_\QQ p$; and every dense set $D\subseteq \PP$ is pre-dense in $\QQ$. In this paper we write $\PP\compembed \QQ$ if there is a restriction from $\QQ$ to $\PP$. This is equivalent to the usual notion in case $\PP$ and $\QQ$ are complete Boolean algebras and $\PP$ is a sub-algebra of $\QQ$.

If $\PP\subseteq \QQ$ then we let $\QQ/\PP$ be the $\PP$-name for the sub-ordering of $\QQ$ on $\QQ/G_\PP = \left\{ q\in \QQ  \,:\, \text{for all }p\in G_\PP,\, p\not\perp_{\QQ} q \right\}$. For $q\in \QQ$ and $p\in \PP$, $p\force q\in \QQ/\PP$ if and only if every $r\le p$ in $\PP$ is compatible with $q$ in $\QQ$. Thus, if $i$ is a restriction of $\QQ$ to $\PP$ then for all $q\in \QQ$, $i(q)\force_\PP q\in \QQ/\PP$. This of course holds for $p = i(q)$.  Also note that $p\force_\PP q\notin \QQ/\PP$ if and only if $p\perp_\QQ q$. Further, for $p\in\PP$ and $q\in \QQ$, if $p\force_\PP q\in \QQ/\PP$ then $p\force_\PP i(q)\in G_\PP$ (every $p'\le p$ is compatible with $q$ in $\QQ$, and so (by applying~$i$) compatible with $i(q)$ in $\PP$.) Thus $i(q)$ is essentially the weakest condition forcing that $q\in \QQ/\PP$ (only lack of separativity could cause it to not technically be the greatest such condition). Thus, if $G\subset \PP$ is generic, then for all $q\in \QQ$, $q\in \QQ/G$ if and only if $i(q)\in G$.

\begin{fact}\label{fact:generic_compatibility_in_quotient}
	Let $i\colon \QQ\to \PP$ be a restriction. If $D\subseteq \QQ$ is dense, then in $V^{\PP}$, $D\cap (\QQ/\PP)$ is dense in $\QQ/\PP$. In particular, for every $g\in G_\PP$ and every $q\in \QQ/\PP = \QQ/G_\PP$ there is some $\bar q\le q,g$ in $\QQ/\PP$. 
\end{fact}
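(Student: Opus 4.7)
The plan is to reduce the ``in particular'' clause to the main density statement, and to prove the main density statement by showing directly that a certain natural subset of $\PP$ is dense below $i(q)$, using all four clauses of the definition of a restriction.

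For the main claim, fix $q\in\QQ$ and consider
$$E_q = \{\, i(q') \,:\, q'\in D \text{ and } q'\le q\,\} \subseteq \PP.$$
I would first show $E_q$ is dense below $i(q)$ in $\PP$. Given $p_0\le i(q)$ in $\PP$, clause~(4) of the definition of a restriction (every $\PP$-extension of $i(q)$ is $\QQ$-compatible with $q$) yields $r\in\QQ$ with $r\le p_0$ and $r\le q$; density of $D$ in $\QQ$ then gives $q'\in D$ with $q'\le r$, hence $q'\le q$ and $q'\le p_0$. Combining clause~(1) (order-preservation) with clause~(3) ($i\rest\PP = \id_\PP$), $i(q')\le i(p_0) = p_0$, so $i(q')\in E_q$ lies below $p_0$. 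Now pass to a generic $G\subseteq\PP$. As noted in the discussion preceding the fact, the equivalence $q'\in \QQ/G \Iff i(q')\in G$ holds; since $q\in \QQ/G$ says exactly $i(q)\in G$, the density of $E_q$ below $i(q)$ together with genericity provides $q'\in D$ with $q'\le q$ and $i(q')\in G$, i.e.\ $q'\in D\cap(\QQ/G)$ below $q$. This proves that $D\cap(\QQ/\PP)$ is dense in $\QQ/\PP$.

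For the ``in particular'' clause, given $g\in G_\PP$ and $q\in\QQ/G_\PP$, I would apply the main claim to
$$D = \{\, q''\in\QQ \,:\, q''\le g\,\}\cup \{\, q''\in\QQ \,:\, q''\perp_\QQ g\,\},$$
which is manifestly dense in $\QQ$: any $q^*\in\QQ$ either has a common extension with $g$ (landing in the first piece) or is already in the second. Choose $\bar q\le q$ in $D\cap(\QQ/G_\PP)$. By definition of $\QQ/G_\PP$, every element of $G_\PP$ is $\QQ$-compatible with $\bar q$; in particular $g$ is compatible with $\bar q$, so $\bar q$ cannot lie in the second piece of $D$. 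Hence $\bar q\le g$, and altogether $\bar q\le q,g$ as required. The only computation requiring real care is the chain $i(q')\le i(p_0)=p_0$ inside $\PP$; the rest is a direct unpacking of definitions plus a standard genericity argument, so I do not anticipate any substantive obstacle.
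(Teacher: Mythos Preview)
Your argument is correct. The paper states this as a \emph{Fact} without proof, so there is no ``paper's own proof'' to compare against; your proposal simply supplies the routine verification the authors chose to omit. Both parts are handled cleanly: the density of $E_q$ below $i(q)$ uses exactly the right clauses of the restriction definition, and the reduction of the ``in particular'' clause to the main statement via the dense set $D = \{q'' \le g\}\cup\{q'' \perp_\QQ g\}$ is standard and sound (note $g\in\PP$ is a ground-model object, so $D$ is legitimately a dense set in $V$).
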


\begin{fact} \label{fact:3_in_a_row}
	Suppose that $\PP\compembed \QQ \compembed \RR$; let $i$ be a restriction of $\QQ$ to $\PP$ and $j$ be a restriction of $\RR$ to $\QQ$. Then $i\circ j$ is a restriction of $\RR$ to $\PP$. In $V^{\PP}$, $\QQ/\PP \compembed \RR/\PP$. 
\end{fact}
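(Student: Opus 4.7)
The first claim reduces to verifying the four axioms in the definition of a restriction. Order-preservation, the identity condition on $\PP$ (noting $j\rest{\PP} = \id_\PP$ since $\PP \subseteq \QQ$), and $r \leq (i \circ j)(r)$ are all immediate from the analogous properties of $i$ and $j$ (using $\QQ \subseteq \RR$ to combine inequalities). For axiom~(4), suppose $p \leq_\PP i(j(r))$; applying~(4) for $i$ to this inequality yields $q \in \QQ$ with $q \leq p$ and $q \leq j(r)$ in $\QQ$, and then applying~(4) for $j$ to $q \leq j(r)$ yields $r' \in \RR$ with $r' \leq q$ and $r' \leq r$ in $\RR$, so $r'$ witnesses the compatibility of $p$ and $r$ in $\RR$.

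For the second claim, the natural candidate in $V^\PP$ is $\bar j \colon \RR/\PP \to \QQ/\PP$ defined by $\bar j(r) = j(r)$. To see $\bar j$ is well-defined, observe that if $r' \leq r, p$ in $\RR$ witnesses compatibility of $r \in \RR/\PP$ with $p \in G_\PP$, then $j(r') \leq j(r)$ and $j(r') \leq j(p) = p$ in $\QQ$ witness the compatibility of $j(r)$ with $p$; the same lifting shows $\QQ/\PP \subseteq \RR/\PP$. Axioms~(1)--(3) for $\bar j$ transfer straightforwardly from the corresponding properties of $j$.

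The main obstacle is axiom~(4) for $\bar j$: given $q \in \QQ/\PP$ and $r \in \RR/\PP$ with $q \leq j(r)$ in $\QQ$, one must produce a common extension of $q$ and $r$ \emph{within} $\RR/\PP$, not merely in $\RR$. My plan is to temporarily pass to a larger model. Working in $V[G_\PP]$, let $H_\QQ$ be $\QQ/\PP$-generic below $q$; the standard two-step lemma identifies the upward closure $G_\QQ \subseteq \QQ$ as a $V$-generic filter for $\QQ$ with $G_\PP \subseteq G_\QQ$ and $q \in G_\QQ$. Since $q \leq j(r)$, the filter $G_\QQ$ contains $j(r)$, so the biconditional stated just before Fact~\ref{fact:generic_compatibility_in_quotient}, applied to $\QQ \compembed \RR$, gives $r \in \RR/G_\QQ$. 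Applying Fact~\ref{fact:generic_compatibility_in_quotient} for $\QQ \compembed \RR$ to $q \in G_\QQ$ and $r \in \RR/G_\QQ$, one obtains $r^* \in \RR/G_\QQ$ with $r^* \leq r, q$; since $G_\PP \subseteq G_\QQ$ forces $\RR/G_\QQ \subseteq \RR/G_\PP$, one concludes $r^* \in \RR/\PP$. Finally, the statement ``$r^* \in \RR$, $r^* \leq r$, $r^* \leq q$, $r^* \in \RR/\PP$'' is absolute between $V[G_\PP]$ and $V[G_\PP][H_\QQ]$, so the witness $r^*$ already lives in $V[G_\PP]$, completing the verification.
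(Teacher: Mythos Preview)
The paper states Fact~\ref{fact:3_in_a_row} without proof, so there is nothing to compare your argument against. Your verification of the first claim (that $i\circ j$ is a restriction) is correct and routine.

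Your argument for the second claim is also correct, but the detour through a further generic extension $V[G_\PP][H_\QQ]$ and the absoluteness step is heavier than necessary. A more direct route: in $V$, the set
\[
E \;=\; \{\,r'\in\RR : r'\le q \text{ and } r'\le r\,\}\;\cup\;\{\,r'\in\RR : r'\perp_\RR q\,\}
\]
is dense in $\RR$ (given any $r''$ compatible with $q$, pass below both; the resulting condition lies below $j(r)$, so axiom~(4) for $j$ gives compatibility with $r$). By Fact~\ref{fact:generic_compatibility_in_quotient} applied to the restriction $i\circ j$ of $\RR$ to $\PP$, the set $E\cap(\RR/G_\PP)$ is dense in $\RR/G_\PP$. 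Since $q\in\QQ/G_\PP\subseteq\RR/G_\PP$, there is $r^*\in E\cap(\RR/G_\PP)$ with $r^*\le q$; such $r^*$ cannot be incompatible with $q$, so $r^*\le q,r$ as required. This avoids introducing $H_\QQ$ altogether. Your approach is not wrong---the absoluteness reasoning is sound because the witness $r^*$ lies in $\RR\in V$ and the conditions on it are $V[G_\PP]$-definable---but the density argument is the cleaner way to see it.
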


For the following fact, recall that a map $i\colon \QQ\to \PP$ is called a \emph{dense homomorphism} if it preserves order and incompatibility, and its range is a dense subset of $\PP$. If there is such a map, then $\PP$ and $\QQ$ are forcing-equivalent.  

\begin{fact}\label{fact:dense_then_complete}
	Let $\PP\subseteq \QQ$, and suppose that $i\colon \QQ\to \PP$ is a dense homomorphism. Suppose that~$i$ is an idempotent: $i\rest \PP = \id_\PP$. Then~$i$ is a restriction of~$\QQ$ to~$\PP$.
\end{fact}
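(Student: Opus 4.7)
The plan is to verify the four clauses in the definition of ``restriction'' one by one, using separativity of the orders where needed (a standing assumption in forcing contexts). Clauses~(1) and~(3) are immediate: $i$ is order-preserving as part of being a dense homomorphism, and the hypothesis that $i$ is idempotent on $\PP$ is exactly the equation $i \rest \PP = \id_\PP$.

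First I would establish clause~(2), namely $q \le i(q)$ for every $q\in\QQ$. By separativity, it suffices to rule out any $q^*\le q$ in $\QQ$ with $q^* \perp_\QQ i(q)$. If such a $q^*$ existed, then applying $i$ (which preserves both order and incompatibility) would yield $i(q^*) \le i(q)$ in $\PP$ and simultaneously $i(q^*) \perp_\PP i(i(q)) = i(q)$, using idempotence; but $i(q^*) \le i(q)$ already witnesses compatibility of $i(q^*)$ and $i(q)$ in $\PP$, a contradiction.

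With clause~(2) in hand I would prove clause~(4): given $q\in\QQ$ and $p\le i(q)$ in $\PP$, we want $p$ compatible with $q$ in $\QQ$. Since $\range(i)$ is dense in $\PP$, pick $q'\in\QQ$ with $i(q')\le p$; then $i(q')\le i(q)$ in $\PP$, so $i(q')$ and $i(q)$ are compatible in $\PP$. Because $i$ preserves incompatibility (equivalently, its contrapositive: if $i(q'),i(q)$ are compatible then $q',q$ are compatible), we obtain $r\in\QQ$ with $r\le q'$ and $r\le q$. Applying clause~(2) to $q'$ gives $q'\le i(q')\le p$, hence $r\le p$ and $r\le q$, so $r$ witnesses compatibility of $p$ and $q$ in $\QQ$.

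The main subtle point is clause~(2); once it is proved, clause~(4) is a short density-plus-incompatibility-preservation argument. The only thing to be careful about is the use of separativity in ruling out the hypothetical $q^*$ in step~(2), which is the standard convention for the posets considered here.
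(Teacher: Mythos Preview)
Your argument is correct; the paper states this as a Fact without proof, so there is nothing to compare against. Two remarks are worth making.

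First, clause~(4) can be obtained directly, without invoking density or clause~(2). Since $p\in\PP$, idempotence gives $i(p)=p\le i(q)$, so $i(p)$ and $i(q)$ are compatible in~$\PP$; as $i$ preserves incompatibility, $p$ and $q$ are compatible in~$\QQ$. Your detour through a $q'$ with $i(q')\le p$ works but is unnecessary (and note that density is automatic here anyway: $i\rest\PP=\id_\PP$ already forces $\range(i)=\PP$).

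Second, your appeal to separativity for clause~(2) is not merely a convenience but is genuinely required. Without it the statement fails: take $\QQ=\{1,a,b,c\}$ with $c\le a,b\le 1$ and $a,b$ incomparable, $\PP=\{1,a\}$, and set $i(b)=i(c)=a$. Then $i$ is an order- and incompatibility-preserving idempotent onto~$\PP$, yet $b\nleq i(b)=a$. So you are right to flag separativity explicitly; the paper elsewhere acknowledges that it does not assume separativity in general, so the Fact should be read with this caveat.
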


\begin{fact}\label{fact:dense_then_isomorphic_quotient}
	Let $\PP\subseteq \QQ$, and suppose that $i\colon \QQ\to \PP$ is dense. If $\QQ\compembed \RR$ then in $V^\QQ = V^\PP$, $\RR/\QQ = \RR/\PP$. 
\end{fact}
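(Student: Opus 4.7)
The plan is to establish literal equality of the subsets $\RR/G_{\QQ}$ and $\RR/G_{\PP}$ of $\RR$ in the common extension $V^{\PP}=V^{\QQ}$; since both carry the order inherited from $\RR$, this will give equality as sub-orderings. Reading ``$i$ is dense'' as ``$i$ is an idempotent dense homomorphism'' (so $i\rest{\PP}=\id_{\PP}$), Fact~\ref{fact:dense_then_complete} gives $\PP\compembed\QQ$, whence $G_{\PP}=G_{\QQ}\cap\PP$ and $V^{\PP}=V^{\QQ}$. Combined with $\QQ\compembed\RR$ via the restriction $j$, Fact~\ref{fact:3_in_a_row} yields $\PP\compembed\RR$ via $i\circ j$. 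The easy inclusion $\RR/G_{\QQ}\subseteq \RR/G_{\PP}$ is immediate from $G_{\PP}\subseteq G_{\QQ}$.

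The reverse inclusion I would prove by contradiction: take $r\in\RR/G_{\PP}$ and suppose $r\perp_{\RR}q$ for some $q\in G_{\QQ}$. From $q\le i(q)$ and the upward closure of $G_{\QQ}$ one gets $i(q)\in G_{\QQ}\cap\PP=G_{\PP}$, so $r$ is compatible with $i(q)$ in $\RR$; the key step is then to promote this compatibility (with $i(q)\in\PP$) to compatibility with $q\in\QQ$, and then transport it back to $\RR$. Pick $r'\le r, i(q)$ in $\RR$ and apply $j$ to get $j(r')\le j(i(q))=i(q)$ in $\QQ$ (since $i(q)\in\PP\subseteq\QQ$ and $j$ fixes $\QQ$). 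Incompatibility preservation and idempotence of $i$ rule out $j(r')\perp_{\QQ}q$: otherwise one would have $i(j(r'))\perp_{\PP}i(q)$, while order preservation and idempotence give $i(j(r'))\le i(i(q))=i(q)$, a contradiction. Hence a common extension $q'\le j(r'), q$ exists in $\QQ$; the restriction property of $j$ (any $q'\le j(r')$ in $\QQ$ is compatible with $r'$ in $\RR$) yields a common extension $r''\le r', q'$ in $\RR$, and $r''\le r, q$ contradicts $r\perp_{\RR}q$.

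The main obstacle is precisely this lift: promoting compatibility of $r$ with $i(q)$ at the $\PP$-level (automatic from $r\in\RR/G_{\PP}$) to compatibility of $r$ with $q$ itself at the $\RR$-level. Traversing the three layers $\PP\compembed\QQ\compembed\RR$ requires simultaneously exploiting the order-preservation, incompatibility-preservation, and idempotence of $i$, together with the defining restriction property of $j$; this is the only delicate point of the proof.
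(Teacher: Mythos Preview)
Your proof is correct. The paper states this as a Fact without proof, so there is nothing to compare against; your argument supplies exactly the kind of verification the reader would need, correctly interpreting ``$i$ is dense'' as ``idempotent dense homomorphism'' (consistent with Fact~2.3 and with the application in Corollary~3.7) and chaining the restriction properties of $i$ and $j$ in the natural way.
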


\subsection{Embeddings into a Cohen algebra}

For a set $X$, we let $C(X)$ be the finite support product, indexed by $X$, of one-dimensional Cohen forcing $C = (2^{<\w},\preceq)$. We let $\Cohen(X)$ be the completion of $C_X$ (the complete Boolean algebra of which $C_X$ is a dense subset). For disjoint sets $X,Y$ we write $\Cohen(X,Y)$ for $\Cohen(X\cup Y)$. We let $\Cohen = \Cohen(1)$. 

We say that a partial ordering (a notion of forcing) $\PP$ is \emph{equivalent to a Cohen algebra} if there is a dense embedding of $\PP$ into $\Cohen(X)$ for some $X$. We write $\PP\sim~\Cohen(X)$. 

If $X\subseteq Y$ then there is a natural embedding of $C(X)$ into $C(Y)$, which is complete. It induces a complete embedding of $\Cohen(X)$ into $\Cohen(Y)$. 

We will make use of the following Lemma. It is surely known; we include a proof for completeness. 

\begin{lemma}\label{lem:Cohen_extensions}
	Let $\PP\compembed \QQ$, and suppose that $\PP\sim \Cohen(X)$. Let $Y$ be a set. The following are equivalent:
	\begin{enumerate}
			\item $\force_\PP \QQ/\PP\sim\Cohen(Y)$;
			\item Every dense embedding of $\PP$ into $\Cohen(X)$ can be extended to a dense embedding of $\QQ$ into $\Cohen(X,Y)$. 
			\item There is a dense embedding of $\PP$ into $\Cohen(X)$ which can be extended to a dense embedding of $\QQ$ into $\Cohen(X,Y)$.
	\end{enumerate}
\end{lemma}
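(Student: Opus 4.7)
The plan is to establish $(2)\Rightarrow(3)\Rightarrow(1)\Rightarrow(2)$. The first is trivial. Throughout I pass to Boolean completions: let $\BB_\PP,\BB_\QQ$ denote the completions of $\PP,\QQ$. Since $\PP\compembed\QQ$, $\BB_\PP$ is a complete subalgebra of $\BB_\QQ$, and there is the standard two-step decomposition $\BB_\QQ\cong\BB_\PP\starr(\BB_\QQ/\BB_\PP)$.

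For $(3)\Rightarrow(1)$: any dense embedding $g\colon\QQ\to\Cohen(X,Y)$ extending $e\colon\PP\to\Cohen(X)$ extends uniquely to an isomorphism $\bar g\colon\BB_\QQ\to\Cohen(X,Y)$ of complete Boolean algebras, extending the unique isomorphism $\bar e\colon\BB_\PP\to\Cohen(X)$ induced by $e$. Hence in $V^\PP=V^{\Cohen(X)}$, $\bar g$ induces an isomorphism $\BB_\QQ/\BB_\PP\cong\Cohen(X,Y)/\Cohen(X)$, and $\QQ/\PP$ densely embeds into the former. It remains to verify the standard fact that $\Cohen(X,Y)/\Cohen(X)\sim\Cohen(Y)$ in $V^{\Cohen(X)}$: on the dense subset $C(X,Y)$ of finite partial functions, the map $p\mapsto p\rest Y$ induces, modulo the generic on $\Cohen(X)$, a dense homomorphism onto $C(Y)\subseteq\Cohen(Y)$.

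For $(1)\Rightarrow(2)$: fix $e\colon\PP\to\Cohen(X)$, extended uniquely to an isomorphism $\bar e\colon\BB_\PP\to\Cohen(X)$. Using $(1)$, pick a $\PP$-name $\dot h$ forced to be a dense embedding of $\QQ/\PP$ into $\Cohen(Y)$; it extends in $V^\PP$ to an isomorphism $H\colon\BB_\QQ/\BB_\PP\to\Cohen(Y)$. Present $\Cohen(X,Y)$ as $\Cohen(X)\starr\Cohen(Y)$ at the Boolean-algebra level, and define
\[
\bar g(b)\;=\;\bigl(\bar e(\pi(b)),\,H(\dot b)\bigr),
\]
where $\pi\colon\BB_\QQ\to\BB_\PP$ is the Boolean projection and $\dot b$ is the canonical $\BB_\PP$-name for the image of $b$ in $\BB_\QQ/\BB_\PP$. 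For $p\in\BB_\PP$ we have $\pi(p)=p$ and $\dot p=\mathbf 1$, so $\bar g(p)=\bar e(p)$; thus $\bar g$ is an isomorphism of complete Boolean algebras extending $\bar e$. Set $g=\bar g\rest\QQ$.

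The main obstacle is verifying that $g$ is a dense embedding of $\QQ$ into $\Cohen(X,Y)$ extending $e$. Extension of $e$ is immediate from $\bar g\rest\BB_\PP=\bar e$; order- and incompatibility-preservation are inherited from the Boolean isomorphism $\bar g$; density of $\range g$ follows from density of $\QQ$ in $\BB_\QQ$ together with $\bar g$ being a bijection onto $\Cohen(X,Y)$. The only genuinely non-routine work is the identification $\BB_\QQ\cong\BB_\PP\starr(\BB_\QQ/\BB_\PP)$ together with the compatible identification of $\Cohen(Y)^{V^\PP}$ with a $\Cohen(X)$-name for the ground-model Cohen algebra, so that $H$ takes values where needed for the iteration; both are standard Boolean-algebraic forcing manipulations and present no essential difficulty.
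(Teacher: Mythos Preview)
Your argument is correct, but it takes a genuinely different route from the paper's. You work entirely at the level of complete Boolean algebras: you pass to completions, use the canonical two-step decomposition $\BB_\QQ\cong\BB_\PP\starr(\BB_\QQ/\BB_\PP)$, transport the forced isomorphism $H$ along $\bar e$, and then identify $\Cohen(X)\starr\Cohen(Y)^{V^{\Cohen(X)}}$ with $\Cohen(X,Y)$. The paper, by contrast, never passes to completions. It works concretely inside the product space $2^{\omega\cdot X}\times 2^{\omega\cdot Y}$: for $(1)\Rightarrow(2)$ it builds the extension explicitly as $k(q)=\bigcup_{p\le q\rest\PP} i(p)\times U(p,q)$, where $U(p,q)$ is the union of clopen sets $D$ with $p\force D\subseteq^* j(q)$, and then checks order, incompatibility, density, and $k\rest\PP=i$ by hand using clopen rectangles; for $(3)\Rightarrow(1)$ it slices the extended embedding at the generic point $\bar r^G\in 2^{\omega\cdot X}$ to recover the quotient embedding.

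Your approach is cleaner and more conceptual, and it makes transparent why the result is essentially a tautology once one accepts the standard iterated-forcing identifications; its cost is that the ``standard manipulations'' you invoke (the two-step decomposition at the Boolean level, and the identification of $\Cohen(Y)^{V^\PP}$ with a name for the ground-model $\Cohen(Y)$ inside $\Cohen(X,Y)$) are exactly the content being proved, so a careful reader may want those spelled out. The paper's approach is more self-contained and hands-on, at the price of several paragraphs of point-set verification.
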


We will be imprecise and write ``$\QQ/\PP$ is (equivalent to) a Cohen algebra'' when we mean that for some $Y\in V$, 
\[ \force_\PP ``\QQ/G_\PP\text{ is equivalent to }\Cohen(Y)". \]

\begin{proof}
	In this proof, for neatness, let $\R = 2^\w$ be Cantor space. 
	
	Assume (1), we show (2). Let $j$ be a $\PP$-name for a dense embedding of $\QQ/\PP$ into $\Cohen(Y)$. 
	Let $q\mapsto q\rest{\PP}$ be a restriction map. For $q\in \QQ$ and $p\le q\rest{\PP}$, let 
	\[ U(p,q) = \bigcup D \Cyl{D\subseteq \R^Y \text{ is clopen, and }p\force_\PP D\subseteq^* j(q)},\] 	
	where we think of the elements of $\Cohen(Y)$ as regular open subsets of $\R^Y$, and $A\subseteq^* B$ means that $A\setminus B$ is meagre.

	 Let $i\colon \PP\to\Cohen(X)$ be a dense embedding. For $q\in \QQ$, let 
	\[ k(q) = \bigcup_{p\le q\rest{\PP}} i(p)\times U(p,q) .\]
	
	Let $q_0,q_1\in \QQ$. Say $q_1 \le q_0$. Then $q_1\rest{\PP}\le q_0\rest{\PP}$, and for all $p\le q_1\rest{\PP}$, $U(p,q_1)\subseteq U(p,q_0)$. Hence $k(q_1) \subseteq k(q_0)$. 
	
	Suppose that $k(q_0)$ and $k(q_1)$ are compatible; let $E\subseteq \R^X$ and $D\subseteq \R^Y$ be clopen such that $E\times D \subseteq k(q_0) \cap k(q_1)$. First find $p_0\le q_0\rest{\PP}$ such that $E\cap i(p_0)\ne \emptyset$ and $D\cap U(p_0,q_0)\ne \emptyset$. Find $E_0$ and $D_0$ clopen such that $E_0\times D_0 \subseteq (E\times D)\cap (i(p_0)\times U(p_0,q_0))$. Find $p_1\le q_1\rest{\PP}$ such that $E_0\cap i(p_1)\ne \emptyset$ and $D_0\cap U(p_1,q_1)\ne \emptyset$. So $p_0$ and $p_1$ are compatible in $\PP$; let $p\in \PP$ extend both; $p$ forces that $j(q_0)$ and $j(q_1)$ are compatible, so $p$ forces that $q_0$ and $q_1$ are compatible in $\QQ/G_\PP$, and so $q_0$ and $q_1$ are compatible. 
	
	Let $E\times D$ be clopen. There is some $p\in \PP$ such that $i(p) \subseteq^* E$. There is some $\bar p\le_\PP p$ and some $q\in \QQ$ such that $\bar p\force q\in \QQ/G_\PP \andd j(q)\subseteq^* D$. In particular, $\bar p\not\perp_\QQ q$; let $\bar q\le_\QQ \bar p, q$. Then $\bar q\rest{\PP}\le \bar p\rest{\PP} = \bar p$. For any $r\le \bar q\rest {\PP}$ we have $i(r)\subseteq^* E$ and $r\force_\PP j(\bar q)\subseteq^* D$, so $U(r,\bar q)\subseteq^* D$. It follows that $k(\bar q)\subseteq^* E\times D$.

Let $p\in \PP$. Then $p$ forces that $p$ is compatible with every $q\in \QQ/G_\PP$. It follows that $U(p,p) = \R^Y$. For let $D\subseteq \R^Y$ be clopen. In $\PP$, densely below $p$ we can find $p'$ for which we can find some $q'\in \QQ$ with $p'\force q'\in \QQ/G_\PP$ and $p'\force j(q')\subseteq^* D$. Since $p'$ also forces that $p$ and $q'$ are compatible, it forces that $j(p)\cap D$ is nonempty. Hence $p$ forces that $j(p)$ is dense, i.e.\ that $j(p) =^* \R^Y$. Hence $k(p) = i(p)\times \R^Y$.

\

Assume (3), we show (1). Let $i\colon \PP\to \Cohen(X)$ be a dense embedding, and let $k\colon \QQ\to \Cohen(X,Y)$ extend $i$. Let $G$ be $\PP$-generic, and let $\bar r^G$ be the Cohen generic sequence in $\R^X$. In $V[G]$, for $q\in \QQ/G$ let $j(q)$ be the section $k(q)_{\bar r^G}$ which is an open subset of $\R^Y$. 

Let $q_0,q_1\in \QQ/G$. If $q_1\le_\QQ q_0$ then $k(q_1)\subseteq k(q_0)$ (we use maximal representatives, i.e., regular open sets) and so $j(q_0)\subseteq j(q_1)$. 

We show that if $p\force q\in \QQ/G$ and $p\force D\subseteq j(q)$ (for some clopen $D$) then $i(p)\times D\subseteq^* k(q)$. Suppose not; find some clopen $E$ and $C$ such that $E\times C \subseteq i(p)\times D$ but $E\times C$ is disjoint from $k(q)$. Find some $\bar p\le p$ such that $i(\bar p)\subseteq E$. Since $\bar p$ forces that $\bar r^G\in E$, it forces that $C$ is disjoint from $j(q)$, which is impossible.

Suppose that $j(q_0)$ and $j(q_1)$ are compatible; let $p\in G$ force that $D\subseteq j(q_0)\cap j(q_1)$ for some nonempty clopen subset $D$ of $\R^Y$, and that $q_0,q_1\in \QQ/G$. Then $i(p)\times D\subseteq k(q_0)\cap k(q_1)$. For densely many $\bar p\le p$ (in $\PP$) there is some $q\le_\QQ q_0, q_1$ such that $\bar p \force q\in \QQ/G$. For let $p'\le p$. Let $q\le_{\QQ}q_0,q_1$ such that $k(q)\cap (i(p')\times D)$ is nonempty. Let $\bar p\le p'$ such that $i(\bar p)\times D'\subseteq k(q)$ for some nonempty clopen $D'\subseteq D$. Then $\bar p\force q\in \QQ/G$.  

Let $D\subseteq \R^Y$ be clopen. Given $p\in \PP$, find some $q\in \QQ$ such that $k(q) \subseteq i(p)\times D$. Find some $\bar p\le p$ and some $D'\subseteq D$ such that $i(\bar p)\times D'\subseteq k(q)$. So $\bar p\force q\in \QQ/G$ and $j(q)\subseteq D$. 
\end{proof}


\subsection{A restricted form for iterations}

All partial orderings have a greatest element, usually denoted by $1$. 

\medskip

We restrict ourselves to two-step iterations of the following form: $\PP$ is a partial ordering, $(R,\le)$ is some partial ordering, and $S$ is a $\PP$-name for a non-empty upward-closed subset of $R$ (in particular, $\force_\PP 1_R\in S$); we assume that as a name, $S\subseteq \PP\times R$. We then let 
$\PP\starr S$ be the collection of pairs $(p,s)\in \PP\times R$ such that $p\force s\in S$, ordered as a sub-ordering of $\PP\times R$. We note that if $\PP\compembed \QQ$ (with restriction $i$), then $\QQ/\PP$ is a $\PP$-name for an upward-closed subset of $\QQ$. The map $q\mapsto (i(q),q)$ is a dense embedding of $\QQ$ into $\PP\starr (\QQ/\PP)$, so these notions of forcing are equivalent.

\begin{fact}\label{fact:two_step_trivial}
	Suppose that $\PP\compembed \QQ$ with $i\colon \QQ\to \PP$ a restriction. Let $S\subseteq \QQ\times R$ be a $\QQ$-name for an upward-closed subset of a partial ordering $R$. Then the map $(q,s)\mapsto i(q)$ is a restriction of $\QQ\starr S$ to $\PP$.
\end{fact}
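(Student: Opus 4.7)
The plan is to verify the four defining conditions of a restriction (as set out at the start of Section~\ref{sec:preliminaries}) directly. First I would identify $\PP$ as a suborder of $\QQ\starr S$ via the map $p\mapsto (p,1_R)$; this is well-defined because $\force_\QQ 1_R\in S$ (as $S$ is a nonempty upward-closed $\QQ$-name), and it is order-preserving since the order on $\QQ\starr S$ is inherited componentwise from $\QQ\times R$. With this identification fixed, set $i'((q,s)) = i(q)$; the task is to show $i'$ is a restriction of $\QQ\starr S$ to $\PP$.

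Three of the four conditions are immediate. Order-preservation of $i'$ reduces at once to that of $i$ (the second coordinate plays no role). For the inequality $(q,s) \le i'((q,s)) = (i(q),1_R)$ in $\QQ\starr S$, one uses that $q\le i(q)$ in $\QQ$ (since $i$ is a restriction) together with the trivial $s\le 1_R$. And $i'\rest{\PP}=\id_\PP$ because $i'((p,1_R)) = i(p) = p$ for $p\in \PP$, using $i\rest{\PP}=\id_\PP$.

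The only substantive point is the compatibility clause: given $(q,s)\in \QQ\starr S$ and $p\le i(q)$ in $\PP$, I must exhibit a common extension of $(p,1_R)$ and $(q,s)$ in $\QQ\starr S$. Because $i$ restricts $\QQ$ to $\PP$, $p$ is compatible with $q$ in $\QQ$; pick $\bar q\le p,q$ in $\QQ$. Since $q\force_\QQ s\in S$ and $\bar q\le q$, also $\bar q\force s\in S$, so $(\bar q,s)\in \QQ\starr S$. As $\bar q\le p,q$ and $s\le 1_R$, the pair $(\bar q,s)$ lies below both $(q,s)$ and $(p,1_R)$ in $\QQ\starr S$, completing the verification.

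I do not anticipate any genuine obstacle: the statement is a routine lifting of the restriction structure from $\QQ$ to the two-step iteration $\QQ\starr S$, and uses only that $S$ is a nonempty upward-closed $\QQ$-name (so that $1_R$ is always in $S$) together with the compatibility clause of the hypothesis on $i$.
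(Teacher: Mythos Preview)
Your verification is correct and complete: each of the four restriction axioms is checked, and the only nontrivial one (compatibility) is handled exactly as it should be, by pulling a common extension $\bar q\le p,q$ out of~$\QQ$ and pairing it with~$s$. The paper itself states this as a Fact without proof, so there is no alternative argument to compare against; your write-up is precisely the routine verification the paper leaves to the reader.
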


And so $\PP\compembed \QQ\starr S$. Here we identify $\PP$ with its image in $\QQ\starr S$ under the map $p\mapsto (p,1)$. In particular, of course, $\QQ\compembed \QQ\starr S$.

\medskip

For the following note that if $\PP\compembed \QQ$ and $S\subseteq \PP\times R$ is a~$\PP$-name for a subset of~$R$, then~$S$ is also a~$\QQ$-name for a subset of $R$. 

\begin{fact}\label{fact:two-step}
	Suppose that $\PP\compembed \QQ$ with $i\colon \QQ\to \PP$ a restriction. Let $S\subseteq \PP\times R$ be a $\PP$-name for an upward-closed subset of a partial ordering $R$. Then:
	\begin{enumerate}
		\item For all $q\in \QQ$ and $s\in R$, $q\force_{\QQ} s\in S$ if and only if $i(q)\force_{\PP} s\in S$. In particular, for $p\in \PP$, $p\force_\PP s\in S$ if and only if $p\force_\QQ s\in S$. 
		\item $S$ is also a $\QQ$-name for an upward-closed subset of $R$. 
		\item The map $(q,s)\mapsto (i(q),s)$ is a restriction of $\QQ\starr S$ to $\PP\starr S$.
	\end{enumerate}
\end{fact}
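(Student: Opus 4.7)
The plan is to prove the three items in order, using earlier clauses as lemmas. I start with (1), and within it first the special case $p\in\PP$, namely that $\force_\PP$ and $\force_\QQ$ agree on the formula $s\in S$ when the forcing condition already lies in $\PP$. For this I observe that if $H$ is $\QQ$-generic over $V$, then $H\cap\PP$ is $\PP$-generic: dense subsets of $\PP$ are predense in $\QQ$ (as noted earlier in the section from the existence of a restriction), and the filter property of $H$ together with $q\le i(q)$ is what is needed to convert ``predense hit'' into ``hit''. Since $S\subseteq\PP\times R$, the interpretation $S^H$ as a $\QQ$-name uses only coordinates in $H\cap\PP$, so $S^H=S^{H\cap\PP}$. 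Because every $\PP$-generic arises as $H\cap\PP$ for some $\QQ$-generic $H$ (force further with $\QQ/\PP$), this gives $p\force_\PP s\in S \Iff p\force_\QQ s\in S$ for $p\in\PP$.

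Now for the main clause of (1). The direction $i(q)\force_\PP s\in S\Then q\force_\QQ s\in S$ is immediate from the above: if $H$ is $\QQ$-generic with $q\in H$, then $q\le i(q)$ puts $i(q)$ in the filter $H$, hence in $H\cap\PP$, so $s\in S^{H\cap\PP}=S^H$. For the converse I argue by contradiction: if $q\force_\QQ s\in S$ but some $p\le i(q)$ in $\PP$ forces $s\notin S$, then restriction axiom~(4) for $i$ produces $\bar q\le p,q$ in $\QQ$; by the $\PP$-vs-$\QQ$ equivalence just established applied to $p$, we have $p\force_\QQ s\notin S$, so $\bar q\force_\QQ s\notin S$, contradicting $\bar q\le q$.

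Item (2) then follows almost for free: $S\subseteq\PP\times R\subseteq\QQ\times R$ is syntactically a $\QQ$-name, and the identity $S^H=S^{H\cap\PP}$ of the preceding paragraph shows its interpretation as a $\QQ$-name is upward-closed in $R$ because its interpretation as a $\PP$-name is. For (3), item (1) tells us precisely that $(q,s)\in\QQ\starr S$ iff $(i(q),s)\in\PP\starr S$, so the map is well defined into $\PP\starr S$; order-preservation, the inequality $(q,s)\le(i(q),s)$, and the identity of the map on $\PP\starr S$ are componentwise trivial from the analogous properties of $i$. The only nontrivial axiom is (4): given $(\bar p,\bar s)\le(i(q),s)$ in $\PP\starr S$, axiom~(4) for $i$ yields $\bar q\le\bar p,q$ in $\QQ$, and then $(\bar q,\bar s)$ is a legitimate condition in $\QQ\starr S$ by (1) (since $\bar p\force_\PP\bar s\in S$ passes to $\bar q\force_\QQ\bar s\in S$) and obviously extends both $(q,s)$ and $(\bar p,\bar s)$.

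The only delicate point in the whole argument is the matching of interpretations of a $\PP$-name inside a $\QQ$-extension with its interpretation via the induced $\PP$-generic, and showing that $H\cap\PP$ really is $\PP$-generic when $H$ is $\QQ$-generic; once this is made rigorous, all subsequent steps are routine bookkeeping about the product order on $\PP\starr S$ and $\QQ\starr S$, and in particular the verification of (3) reduces to reusing (1) together with the fact that $i$ itself satisfies the four restriction axioms.
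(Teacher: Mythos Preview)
The paper states this result as a \emph{Fact} in the preliminaries and gives no proof at all, so there is nothing to compare against. Your argument is correct and is the standard one: the key observation is that for $\QQ$-generic $H$ the set $H\cap\PP$ is $\PP$-generic (using that dense subsets of $\PP$ are predense in $\QQ$, together with the fact that $i$ is an order-preserving idempotent with $q\le i(q)$ to get directedness of $H\cap\PP$), and that the interpretations of the $\PP$-name $S$ via $H$ and via $H\cap\PP$ coincide. Everything else, as you say, is routine bookkeeping from the four restriction axioms for $i$.

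One small remark: in your verification of axiom~(4) for the map $(q,s)\mapsto(i(q),s)$, you could alternatively avoid invoking the special case of (1) again by noting directly that $\bar q\le\bar p$ implies $i(\bar q)\le i(\bar p)=\bar p$, and then applying the full clause~(1) to $\bar q$ (since $i(\bar q)\le\bar p$ and $\bar p\force_\PP\bar s\in S$ gives $i(\bar q)\force_\PP\bar s\in S$, hence $\bar q\force_\QQ\bar s\in S$). But what you wrote is equally valid.
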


\subsection{Forcing continuity on a non-meagre set}

We fix notation for the notion of forcing from~\cite{Sh473}. In full generality, let $\PP$ be a notion of forcing, and let $\bar \eta = \seq{\eta_i}_{i<\w_1}$ and $\bar \zeta = \seq{\zeta_i}_{i<\w_1}$ be two sequences of $\PP$-names for reals (in this paper, elements of Cantor space $2^\w$). We let $\Shelah(\bar \eta,\bar \zeta)$ be the $\PP$-name for the notion of forcing which adds the definition of a continuous function which makes the map $\eta_{i}\mapsto \zeta_{i}$ continuous on a non-meagre set. Technically, the conditions in $\PP*\Shelah(\bar \eta,\bar \zeta)$ will be pairs $(p,a,\Psi)$, where:
\begin{itemize}
	\item $p\in \PP$;
	\item $a$ is a finite subset of $\w_1$;
	\item $\Psi$ is a finite, (strict) order-preserving map from $2^{<\w}$ to $2^{<\w}$, and:
	\begin{enumerate}
		\item $p$ forces that every element of $\dom \Psi$ is an initial segment of $\eta_i$ for some $i\in a$; and
		\item If $i\in a$, $(\s,\tau)\in \Psi$ and $p\force_\PP \s\prec \eta_i$ then $p\force_\PP \tau\prec \zeta_i$. 
	\end{enumerate}
\end{itemize}

A condition $(q,b,\Phi)$ extends a condition $(p,a,\Psi)$ if $q$ extends $p$ in $\PP$, $a\subseteq b$, and $\Psi\subseteq \Phi$. Note that this is an example of a two-step iteration which obeys the restrictions above: $\Shelah(\bar \eta,\bar \zeta)$ is a $\PP$-name for an upward closed subset of $R$ where $R$ consists of pairs $(a,\Psi)$ where $a$ is a finite subset of $\w_1$ and $\Psi$ is a finite, (strict) order-preserving map from $2^{<\w}$ to $2^{<\w}$, ordered by $\subseteq$ on both coordinates. Note that $R\subset H(\w_1)$ and $|R| = \aleph_1$. 

\medskip

Shelah's notion of forcing starts with $\PP = \Cohen(\w_1)$. Letting $\eta_i$ be the name for the Cohen real added by $\Cohen(\{2i\})$ and $\zeta_i$ be the name for the Cohen real added by $\Cohen(\{2i+1\})$, Shelah uses $\Cohen(\w_1)\starr \Shelah(\bar \eta,\bar \zeta)$. We denote this simply by $\Cohen(\w_1)\starr \Shelah$. 

\begin{proposition}[\cite{Sh473}]\label{prop:Shelah_properties} \
	\begin{enumerate}
		\item In $V^{\Cohen(\w_1)\starr \Shelah}$ there is a non-meagre subset of $\{\eta_{i}\,:\, i<\w_1\}$ on which the map $\eta_{i}\mapsto \zeta_{i}$ is continuous.
		\item For all $i<\w_1$, $\Cohen(2i)\compembed \left(\Cohen(\w_1)\starr \Shelah\right) $ and $\left(\Cohen(\w_1)\starr \Shelah\right)/\Cohen(2i)$ is equivalent to a Cohen algebra (of dimension $\aleph_1$). 
	\end{enumerate}
\end{proposition}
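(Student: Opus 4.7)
The plan is to handle the two parts separately, relying on the combinatorial structure of $\Shelah(\bar\eta,\bar\zeta)$.

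For part~(1), let $G\starr H$ be $(\Cohen(\w_1)\starr\Shelah)$-generic over $V$, and form $\Psi^*=\bigcup\bigl\{\Psi:(p,a,\Psi)\in G\starr H\bigr\}$. Standard density arguments show: (a)~for each $i<\w_1$ the set of conditions with $i\in a$ is dense, and (b)~for each $\s\in 2^{<\w}$, either some extension of $\s$ appears in $\dom\Psi^*$ or $\s$ is eventually forced incomparable with $\dom\Psi$. Thus $\Psi^*$ is a strict order-preserving map from a pruned subtree $T\subseteq 2^{<\w}$ to $2^{<\w}$, inducing a continuous function $F\colon[T]\to 2^\w$. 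The defining clause on conditions then forces $F(\eta_i)=\zeta_i$ whenever $\eta_i\in[T]$. What remains is non-meagreness of $\{\eta_i:i<\w_1\}\cap[T]$: the plan is to show that every meagre $M\in V[G\starr H]$ is coded in an intermediate extension over which cofinally many $\eta_i$ remain Cohen-generic, and hence avoid $M$; this uses the c.c.c.\ of the iteration together with the ``product-Cohen'' character of the $\eta_i$'s.

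For part~(2), the complete embedding $\Cohen(2i)\compembed\Cohen(\w_1)\starr\Shelah$ is immediate by composing the natural $\Cohen(2i)\compembed\Cohen(\w_1)$ with the embedding $\Cohen(\w_1)\compembed\Cohen(\w_1)\starr\Shelah$ of Fact~\ref{fact:two_step_trivial}. For the quotient being Cohen, I propose to first establish the stronger statement that $\Cohen(\w_1)\starr\Shelah\sim\Cohen(\w_1)$, and then apply Lemma~\ref{lem:Cohen_extensions} with $\PP=\Cohen(2i)$, $\QQ=\Cohen(\w_1)\starr\Shelah$, $X=2i$, $Y=\w_1\setminus 2i$: using the homogeneity of $\Cohen(\w_1)$, any dense embedding of $\QQ$ into $\Cohen(\w_1)=\Cohen(X,Y)$ can be arranged to restrict to the canonical embedding on $\Cohen(2i)$, and then condition~(2) of that lemma reads off that $\QQ/\PP\sim\Cohen(Y)\sim\Cohen(\w_1)$.

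The main obstacle is establishing c.c.c.\ for $\Cohen(\w_1)\starr\Shelah$ and then identifying it as a Cohen algebra; this is the technical core of~\cite{Sh473}. The c.c.c.\ argument proceeds by $\Delta$-system reduction: given $\aleph_1$ conditions $(p_\alpha,a_\alpha,\Psi_\alpha)$, refine so that the $p_\alpha$'s are pairwise compatible (c.c.c.\ of $\Cohen(\w_1)$), the $a_\alpha$'s form a $\Delta$-system with root $a^*$, and the $\Psi_\alpha$'s have common ``shape'' agreeing on $a^*$; then two conditions must be amalgamated by extending $p_\alpha\meet p_\beta$ in $\Cohen(\w_1)$ so that $\Psi_\alpha\cup\Psi_\beta$ remains strict order-preserving and consistent with the constraints $\Psi(\s)\prec\zeta_i$ for $i\in a_\alpha\cup a_\beta$. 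The delicate step in the amalgamation is controlling how the common extension decides new initial segments of $\eta_i$ for $i\in a_\alpha\triangle a_\beta$, so that no conflict with the order-preservation clauses arises. Once c.c.c.\ is in hand, that the full forcing is Cohen follows from standard characterizations of the $\aleph_1$-dimensional Cohen algebra among c.c.c.\ atomless forcings of cardinality $\aleph_1$ densely generated by a family of mutually Cohen-generic reals.
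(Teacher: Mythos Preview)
The paper does not prove this proposition; it is quoted from~\cite{Sh473} without argument, so there is no in-paper proof to compare against. Your sketch of part~(1) is the right shape and matches the idea in~\cite{Sh473}.

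For part~(2), however, there is a genuine gap in the reduction step. You want to deduce that $(\Cohen(\w_1)\starr\Shelah)/\Cohen(2i)$ is Cohen from the single fact that $\Cohen(\w_1)\starr\Shelah\sim\Cohen(\w_1)$, by invoking ``homogeneity of $\Cohen(\w_1)$'' to straighten an arbitrary dense embedding so that it restricts to the canonical inclusion on $\Cohen(2i)$. Homogeneity of $\Cohen(\w_1)$ only says that its automorphism group acts transitively on nonzero elements; it does \emph{not} say that any two complete embeddings of $\Cohen(2i)$ into $\Cohen(\w_1)$ are conjugate by an automorphism. For that you would need to know that the two embeddings have isomorphic quotients --- which is exactly what you are trying to prove. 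So as written the argument is circular. (There are theorems to the effect that complete subalgebras of Cohen algebras, and their quotients, are again Cohen; if you intend to invoke such a result you should name it and check it covers the uncountable-dimensional case, rather than appeal to homogeneity.)

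The cleaner route, and the one actually taken in~\cite{Sh473}, is to argue directly in $V^{\Cohen(2i)}$: there the quotient is $\Cohen(\w_1\setminus 2i)\starr\Shelah'$ where $\Shelah'$ is the same forcing but with the first $i$ pairs $(\eta_j,\zeta_j)$ already fixed in the ground model, and one shows this is Cohen by the same analysis as for the full forcing. Finally, your closing ``standard characterization'' of $\Cohen(\w_1)$ is too vague to carry weight; you should cite a precise criterion (e.g.\ a density/$\pi$-base characterization of Cohen algebras) and verify its hypotheses for the forcing at hand.
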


\section{Iterations with restricted memory} \label{sec:restricted_iterations}

Let $\bar \PP = \seq{\PP_\alpha,\QQ_\alpha}_{\alpha<\delta}$ be a finite support iteration. For $\alpha\le \delta$, we think of the elements of $\PP_\alpha$ as sequences of length $\alpha$. As above, we suppose that all successor steps are ``$V$-based'', in the sense that for all $\alpha<\delta$ there is some partial ordering $(R_\alpha,\le)$ in $V$ such that $\QQ_\alpha$ is a $\PP_{\alpha}$-name for an upward-closed subset of $R_\alpha$, and the ordering on $\PP_{\alpha+1} = \PP_{\alpha}\starr \QQ_\alpha$ is inherited from the one on $\PP_\alpha\times R_\alpha$. It follows that $\PP_\delta$ is a subset of the finite-support product $\bigoplus_{\alpha<\delta}R_\alpha$ of the $R_\alpha$'s, with the inherited ordering. That is, for $p,q\in \PP_\delta$, $p\le_{\PP_\delta} q$ if and only if for all $\alpha<\delta$, $p(\alpha)\le_{R_\alpha} q(\alpha)$. Below, we will always assume the existence of such ambient orderings $R_\alpha$.

For $u\subseteq \delta$ we let 
\[ \PP_u = \left\{ p\in \PP_\delta  \,:\, \text{for all }\alpha\in \delta\setminus u,\, p(\alpha)=1 \right\},\]

with order inherited from $\PP_\delta$. Technically we should have called this $\PP_{u,\delta}$. However, if $u\subseteq \alpha< \delta$ then $\PP_{u,\alpha}$ and $\PP_{u,\delta}$ are naturally isomorphic by appending a sequence of ones, so we ignore the difference between them. Under this identification there is no conflict between the two meanings of $\PP_{\alpha}$ for $\alpha<\delta$. Note that $\PP_u$ is upward-closed in $\PP_\delta$. If $u\subseteq v\subseteq \delta$ then $\PP_u\subseteq \PP_v$. 

\medskip

Now the main point is that usually, unless $u=\alpha$ is an initial segment of $\delta$, $\PP_u$ will not contain much. For example, if $0\notin u$, but to define each $\QQ_\alpha$ (for $\alpha>0$) we need access to $\QQ_0$, then $\PP_u$ will contain very little, since each condition in $\PP_u$ knows nothing about $\QQ_0$. In the other extreme, if the iteration is actually a product, no~$\QQ_\alpha$ needs any information about any other~$\QQ_\beta$, and in this case, for any $u\subseteq \delta$,~$\PP_u$ is just the product restricted to~$u$, which behaves perfectly nicely; in particular, $\PP_u\compembed \PP_\delta$. As we mentioned in the introduction, we will be using iterations which are not quite products but for which each~$\QQ_\alpha$ needs information from ``not so many''~$\QQ_\beta$ for $\beta<\alpha$, and so for many sets $u\subseteq \delta$ we will have $\PP_u\compembed \PP_\delta$. In other words, each~$\QQ_\beta$ will have ``restricted memory''. A~\emph{memory template} for the iteration~$\bar \PP$ specifies, for each~$\alpha$, which~$\QQ_\beta$ (for~$\beta<\alpha$) are needed to compute~$\QQ_\alpha$. 

\begin{definition}\label{def:memory_template}
A \emph{memory template} (of length $\delta$) is a sequence $\uu = \seq{\uu_\alpha}_{\alpha<\delta}$ such that for all $\alpha<\delta$, 
\begin{itemize}
	\item $\uu_\alpha\subseteq \alpha$; and
	\item if $\beta\in \uu_\alpha$ then $\uu_\beta\subset \uu_\alpha$. 
\end{itemize}
\end{definition}

The second condition is a natural transitivity requirement: if $\QQ_\beta$ is needed to compute $\QQ_\alpha$, and $\QQ_\gamma$ is needed to compute $\QQ_\beta$, then certainly $\QQ_\gamma$ is needed to compute $\QQ_\alpha$. 
Note that if $\uu$ is a memory template of length $\delta$ and $\alpha<\delta$, then $\uu\rest \alpha$ is a memory template of length $\alpha$. 

\begin{definition}\label{def:u-iteration}
Let $\uu$ be a memory template of length $\delta$. A finite support iteration $\bar \PP = \seq{\PP_\alpha,\QQ_\alpha}_{\alpha<\delta}$ is a \emph{$\uu$-iteration} if for all $\alpha<\delta$, $\QQ_\alpha$ is a $\PP_{\uu_\alpha}$-name. (We will show below that in this case, $\PP_{\uu_\alpha}\compembed \PP_\delta$, which means that this definition makes more sense.)
\end{definition}

Here we assume as above that as a name, $\QQ_\alpha$ is a subset of $\PP_{\uu_\alpha}\times R_\alpha$ (where $R_\alpha$ is the ambient partial ordering for $\QQ_\alpha$ mentioned above).

\begin{definition}\label{def:u-closure}
Let $\uu$ be a memory template of length $\delta$. A subset $u$ of $\delta$ is \emph{$\uu$-closed} if for all $\alpha\in u$, $\uu_\alpha\subset u$. 
\end{definition}

So each $\uu_\alpha$ is $\uu$-closed. Note that each $\alpha<\delta$ is $\uu$-closed, indeed if $u\subseteq \delta$ is $\uu$-closed and $\alpha<\delta$ then $u\cap \alpha$ is $\uu$-closed. A subset of $\alpha<\delta$ is $\uu$-closed if and only if it is $\uu\rest \alpha$-closed. 

\medskip

Let $\bar \PP$ be an iteration of length $\delta$, and let $u\subseteq \delta$. For $p\in \PP_\delta$ we define a $\delta$-sequence $p\rest u$ by letting, for $\alpha<\delta$, 
\[ p\rest{u} (\alpha) = \begin{cases}
p(\alpha),\quad & \text{if }\alpha\in u;\\
1,\quad & \text{if }\alpha\notin u.
\end{cases}\]
Note that for all $p\in \PP_u$, $p\rest{u} = p$. In general, $p\rest u$ may not be an element of $\PP_\delta$, again, because erasing part of its head may cause us to lose the evidence for its tail being in $\PP_\delta$.

\begin{lemma}\label{lem:the_restriction_function}
Let $\uu$ be a memory template of length $\delta$, let $\bar \PP$ be a $\uu$-iteration, and let $u\subseteq v\subseteq \delta$ be $\uu$-closed. Then for all $q\in \PP_v$, $q\rest u \in \PP_u$, and the map $q\mapsto q\rest u$ is a restriction of $\PP_v$ to $\PP_u$ (so $\PP_u\compembed \PP_v$).
\end{lemma}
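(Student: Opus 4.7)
The plan is to proceed by induction on $\delta$, with the base case $\delta = 0$ vacuous. For the inductive step, I would split the argument according to whether $\delta$ is a limit or successor ordinal; the successor case does the real work, while the limit case is a routine reduction via finite support.

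For limit $\delta$: every $q \in \PP_v$ has $\supp(q) \subseteq \alpha$ for some $\alpha < \delta$, and then $q \rest u = q \rest (u \cap \alpha)$. Since $u \cap \alpha \subseteq v \cap \alpha$ are $\uu \rest \alpha$-closed in $\alpha$, applying the inductive hypothesis at $\alpha$ places $q \rest u$ in $\PP_{u \cap \alpha} \subseteq \PP_u$ and certifies the four restriction properties up to level $\alpha$. Each such property involves only finitely many conditions at a time, so choosing $\alpha$ large enough to accommodate their supports transfers the conclusion from $\alpha$ to $\delta$.

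For successor $\delta = \gamma + 1$, I would distinguish three subcases according to the status of $\gamma$. If $\gamma \notin v$, then $u, v \subseteq \gamma$ and the claim reduces verbatim to the inductive hypothesis at $\gamma$. If $\gamma \in v \setminus u$, then $u \subseteq \gamma$ and $\PP_u = \PP_{u \cap \gamma}$; viewing $\PP_v = \PP_{v \cap \gamma} \starr \QQ_\gamma$, the desired restriction is the composition, via Fact~\ref{fact:3_in_a_row}, of the forgetful restriction from Fact~\ref{fact:two_step_trivial} with the inductive restriction $\PP_{v \cap \gamma} \to \PP_{u \cap \gamma}$, and a direct calculation shows this composite agrees with $q \mapsto q \rest u$.

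The pivotal subcase is $\gamma \in u$. Here the $\uu$-closure of $u$ forces $\uu_\gamma \subseteq u \cap \gamma$, and the inductive hypothesis at $\gamma$ supplies $\PP_{\uu_\gamma} \compembed \PP_{u \cap \gamma} \compembed \PP_{v \cap \gamma}$. Fact~\ref{fact:two-step} then both legitimizes reading $\QQ_\gamma$ as a $\PP_{u \cap \gamma}$-name — so that $\PP_u = \PP_{u \cap \gamma} \starr \QQ_\gamma$ and $\PP_v = \PP_{v \cap \gamma} \starr \QQ_\gamma$ are meaningful — and, applied to the inductive restriction $i \colon \PP_{v \cap \gamma} \to \PP_{u \cap \gamma}$, yields the restriction $(q, s) \mapsto (i(q), s)$ of $\PP_v$ to $\PP_u$, which under the identification of pairs with $\gamma$-long sequences is exactly $q \mapsto q \rest u$. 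The main obstacle I anticipate is this subcase: everything hinges on the name $\QQ_\gamma$ being expressible over the small forcing $\PP_{u \cap \gamma}$, which is precisely what the transitivity requirement in Definition~\ref{def:memory_template} — equivalently, the $\uu$-closure of $u$ — is designed to deliver. Once that is secured, the remainder is bookkeeping to match the coordinate-wise map $q \mapsto q \rest u$ with the restriction produced by the structural two-step facts.
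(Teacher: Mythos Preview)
Your proposal is correct and follows essentially the same inductive structure as the paper's proof: the limit step via finite support, and the successor step $\delta=\gamma+1$ split into the three cases $\gamma\notin v$, $\gamma\in v\setminus u$, $\gamma\in u$, handled respectively by the inductive hypothesis, Fact~\ref{fact:two_step_trivial}, and Fact~\ref{fact:two-step}. The only cosmetic difference is that in the middle case the paper feeds the inductive restriction $\PP_{v\cap\gamma}\to\PP_u$ directly into Fact~\ref{fact:two_step_trivial} rather than composing via Fact~\ref{fact:3_in_a_row}, but both routes yield the same map $q\mapsto q\rest u$.
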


\begin{proof}
By induction on $\delta$. First, suppose that $\delta$ is a limit ordinal and that the lemma holds for all $\alpha<\delta$. That the lemma holds for $\delta$ follows from the fact that $\bar \PP$ is of finite support, so viewed as sequences of length below $\delta$, we have $\PP_{v} = \bigcup_{\alpha<\delta}\PP_{v\cap \alpha}$, and $\PP_{u} = \bigcup_{\alpha<\delta} \PP_{u\cap \alpha}$; and noting again that for all $\alpha<\delta$, $u\cap \alpha$ and $v\cap \alpha$ are $\uu\rest \alpha$-closed subsets of $\alpha$. 

Next let $\delta$ be any ordinal and suppose that the lemma holds for $\delta$; we show it holds for $\delta+1$. 
Of course the point is that if $w\subseteq \delta+1$ is $\uu$-closed and $\delta\in w$, then $\uu_\delta\subset w$. Then $w\cap \delta$ is $\uu\rest{\delta}$-closed, and by induction and by Fact~\ref{fact:two-step}, $\QQ_\delta\subseteq \PP_{w\cap \delta}\times R_\delta$ is also a $\PP_{w\cap \delta}$-name for an upward-closed subset of $R_\delta$. 

Let $u\subseteq v\subseteq \delta+1$ be $\uu$-closed. There are three cases:
\begin{itemize}
\item If $\delta\notin v$ then the lemma for $u$ and $v$ 
follows from the fact it holds at stage~$\delta$.
\item If $\delta \in v$ but $\delta\notin u$ then the lemma for~$u$ and~$v$ follows from Fact~\ref{fact:two_step_trivial} applied to $\PP = \PP_{u}$ and $\QQ= \PP_{v\cap \delta}$.
\item If $\delta\in u$ then the lemma for $u$ and $v$ follows from Fact~\ref{fact:two-step} applied to $\PP = \PP_{u\cap \delta}$ and $\QQ= \PP_{v\cap \delta}$. \qedhere
\end{itemize}
\end{proof}

For the rest of this section, let $\uu$ be a memory template (of length $\delta$) and let $\bar \PP$ be a $\uu$-iteration.

\begin{porism}\label{por:obvious_identity}
	Let $u\subseteq \delta$ be $\uu$-closed and suppose that $u$ has a greatest element $\alpha$. Then $\PP_u = \PP_{u\cap \alpha}\starr \QQ_\alpha$.
\end{porism}

\medskip

Let $p,q\in \bigoplus_\alpha R_\alpha$ and suppose that for all $\alpha<\delta$, $p(\alpha)$ and $q(\alpha)$ are comparable in $R_\alpha$. Then we can define $p\meet q = \min(p,q) \in \bigoplus R_\alpha$ by taking at every $\alpha$ the smaller of the two values $p(\alpha)$ and $q(\alpha)$;  $p\meet q$ is the greatest lower bound of $p$ and $q$ in $\bigoplus R_\alpha$. An induction on $\delta$ shows that if $p,q\in \PP_\delta$ then $p\meet q\in \PP_\delta$ as well, and so $p\meet q$ is the greatest lower bound of $p$ and $q$ in $\PP_\delta$, similarly in $\PP_u$ for any $\uu$-closed $u$ such that $p,q\in \PP_u$. 

In particular, in the situation above ($u\subseteq v\subseteq \delta$ are $\uu$-closed), if $q\in \PP_v$, $p\in \PP_u$ and $p\le q\rest u$, then $p\meet q\in \PP_v$ is the greatest lower bound of $p$ and $q$ in $\PP_v$.

\medskip

For the next lemma, note that if $u,v\subseteq \delta$ are $\uu$-closed, then $u\cap v$ and $u\cup v$ are also $\uu$-closed. The map from $\PP_{u\cup v}$ to $\PP_u\times \PP_v$ given by $q\mapsto (q\rest u,q\rest v)$ is injective, and preserves both order and non-order. 

\begin{lemma}\label{lem:tensor_product}
Let $u,v\subseteq \delta$ be $\uu$-closed. In $V^{\PP_{u\cap v}}$, the map $q\mapsto (q\rest u,q\rest v)$ is a dense embedding of $\PP_{u\cup v}/\PP_{u\cap v}$ into $(\PP_u/\PP_{u\cap v})\times (\PP_v/\PP_{u\cap v})$. 
\end{lemma}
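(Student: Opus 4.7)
The plan is to break the claim into the three defining properties of a dense embedding---well-definedness on the quotient, order/incompatibility preservation, and density---and reduce them all to a single ``gluing'' construction. Well-definedness is immediate: since $u\cap v\subseteq u,v$, for $q\in \PP_{u\cup v}$ one has $(q\rest u)\rest{u\cap v} = q\rest{u\cap v} = (q\rest v)\rest{u\cap v}$, so by Lemma~\ref{lem:the_restriction_function} and the characterisation of quotients via restrictions, if $q\rest{u\cap v}$ lies in the $\PP_{u\cap v}$-generic $G$ then so do both $q\rest u$ and $q\rest v$. Order preservation in both directions is immediate from the pointwise nature of the order on $\bigoplus R_\alpha$.

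The technical core is a gluing lemma: if $p_u\in \PP_u$ and $p_v\in \PP_v$ satisfy $p_u\rest{u\cap v} = p_v\rest{u\cap v}$, then the sequence $q\in \bigoplus_\alpha R_\alpha$ defined pointwise by $q(\alpha) = p_u(\alpha)$ for $\alpha\in u$, $q(\alpha) = p_v(\alpha)$ for $\alpha\in v\setminus u$, and $q(\alpha) = 1$ otherwise, actually belongs to $\PP_{u\cup v}$, with $q\rest u = p_u$ and $q\rest v = p_v$. I would prove this by induction on $\alpha\in u\cup v$: say $\alpha\in u$; the memory template condition gives $\uu_\alpha\subseteq u\cap \alpha\subseteq (u\cup v)\cap \alpha$, so by Fact~\ref{fact:two-step} the name $\QQ_\alpha$ is also a $\PP_{(u\cup v)\cap \alpha}$-name. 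Since $q\rest{u\cap \alpha} = p_u\rest{u\cap \alpha}$ forces $q(\alpha)=p_u(\alpha)\in \QQ_\alpha$, and $q\rest{(u\cup v)\cap \alpha}\le q\rest{u\cap \alpha}$ pointwise, the same forcing relation carries over. Verifying that this pointwise $q$ actually respects the iteration structure rather than merely the product is the main obstacle, and is precisely what $\uu$-closedness is designed to handle.

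Given the gluing lemma, density and incompatibility preservation both reduce to a uniform meet-and-glue. For density, given $(p_u, p_v)$ in the product quotient, both $p_u\rest{u\cap v}$ and $p_v\rest{u\cap v}$ lie in $G$, so admit a common refinement $r\in G$. The pointwise meets $p_u'=p_u\meet r$ and $p_v' = p_v\meet r$ exist (by pointwise comparability of $p_u,r$ and of $p_v,r$, using the discussion preceding the lemma) and lie in $\PP_u,\PP_v$ respectively, extending $p_u,p_v$ and agreeing with $r$ on $u\cap v$; gluing gives $q\in \PP_{u\cup v}$ with $q\rest{u\cap v} = r\in G$, so $q\in \PP_{u\cup v}/G$ and $(q\rest u,q\rest v)=(p_u',p_v')\le (p_u,p_v)$. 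For incompatibility preservation the symmetric argument applies: if the images of $q_1,q_2\in\PP_{u\cup v}/G$ are compatible in the product quotient via witnesses $p_u\le q_1\rest u, q_2\rest u$ in $\PP_u/G$ and $p_v\le q_1\rest v, q_2\rest v$ in $\PP_v/G$, the same meet-and-glue on $p_u$ and $p_v$ produces a common refinement $q\in \PP_{u\cup v}/G$ of $q_1$ and $q_2$.
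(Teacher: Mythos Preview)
Your proof is correct and follows essentially the same route as the paper. Two minor presentational differences are worth noting. First, your gluing lemma is already a special case of the observation recorded just before this lemma: if $p,q\in\PP_{u\cup v}$ are pointwise comparable then $p\meet q\in\PP_{u\cup v}$. When $p_u\rest{u\cap v}=p_v\rest{u\cap v}$, the conditions $p_u,p_v$ (viewed in $\PP_{u\cup v}$) are pointwise comparable---equal on $u\cap v$, and elsewhere at least one coordinate is $1$---so $p_u\meet p_v$ is precisely your glued $q$, and no separate induction is needed. The paper carries out density via the iterated meet $r=(g\meet p)\meet q$, which is exactly your meet-and-glue. Second, for incompatibility preservation the paper does not repeat the construction but simply invokes density: if the images of $r_0,r_1$ are compatible in the product, density gives some $s\in\PP_{u\cup v}/H$ whose image lies below both images, and since the map reflects order (as you noted), $s\le r_0,r_1$.
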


\begin{proof}
Let $H\subseteq \PP_{u\cap v}$ be generic; work in $V[H]$. Certainly if $q\in \PP_{u\cup v}/H$ then as $q\rest{u\cup v}\in H$ 
and $(q\rest{u})\rest{u\cap v} = q\rest{u\cap v}$ (and similarly for $v$), 
again we have $q\rest u \in \PP_u/H$ and $q\rest v \in \PP_v/H$. So the map $j(q) = (q\rest u,q\rest v)$ is indeed from $\PP_{u\cup v}/H$ to $(\PP_u/H) \times (\PP_v/H)$. 

We show that the range of $j$ is dense. Let $(p,q)\in (\PP_u/H)\times (\PP_v/H)$. Then $p\rest{u\cap v}, q\rest{u\cap v}\in H$; find some $g\in H$ extending both. From $g\le p\rest{u\cap v}$ we conclude that $(g\meet p)\in \PP_u/H$. Now $q\rest u = q\rest{u\cap v}\ge g \ge (g\meet p)$, and so $r = (g\meet p)\meet q\in \PP_v/H$. Then $j(r) = (g\meet p,g\meet q)\le (p,q)$. 

We show that $j$ preserves incompatibility. Let $r_0,r_1\in \PP_{u\cup v}/H$, and suppose that $j(r_0)$ and $j(r_1)$ are compatible in $(\PP_u/H)\times (\PP_v/H)$; let $h \le (j(q_0),j(q_1))$ witness this, and then find some $s\in \PP_{u\cup v}/H$ with $j(s)\le h$. So $s\rest u\le q_0\rest u,q_1\rest u$ and $s\rest v \le q_0\rest v,q_1\rest v$, and we conclude that $s\le q_0,q_1$. 
\end{proof}

Thus, the filters $K\subset \PP_{u\cup v}$ generic over $V$ correspond to the filters $G\subset \PP_u$ and $H\subset \PP_v$ with $G$ generic over $V$, $H$ generic over $V[G]$ and $G\cap \PP_{u\cap v} = H\cap \PP_{u\cap v}$, with $K = \{ g\meet h\,:\, g\in G \andd h\in H\}$. 

\begin{porism} \label{por:compatibility_in_product}
	Let $u,v\subseteq \delta$ be $\uu$-closed and let $H\subseteq \PP_{u\cap v}$ be generic. If $(p,q)\in (\PP_u/H) \times (\PP_v/H)$ then there is some $h\in H$ such that $(h\meet p)\meet q \in \PP_{u\cup v}$. 
\end{porism}

\begin{lemma}\label{lem:tensor_2}
Let $u,v\subseteq \delta$ be $\uu$-closed. In $V^{\PP_u}$, the map $q\mapsto q\rest{v}$ is a dense embedding of $\PP_{u\cup v}/\PP_u$ into $\PP_{v}/\PP_{u\cap v}$. 
\end{lemma}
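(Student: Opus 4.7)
The plan is to verify the three conditions for a dense homomorphism: order-preservation, incompatibility-preservation, and density of the range. Let me first note that the map is well-defined: given $q\in \PP_{u\cup v}/G_u$, by the discussion before Fact~\ref{fact:generic_compatibility_in_quotient} together with Lemma~\ref{lem:the_restriction_function}, membership in $\PP_{u\cup v}/G_u$ is equivalent to $q\rest u \in G_u$; therefore $(q\rest v)\rest{u\cap v} = q\rest{u\cap v} = (q\rest u)\rest{u\cap v}\in G_u \cap \PP_{u\cap v} = H$, so $q\rest v\in \PP_v/H$. Order-preservation is immediate from the coordinate-wise definition of $\le$ on $\PP_{u\cup v}$ and $\PP_v$.

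For density of the range, fix $s\in \PP_v/H$. Then $s\rest{u\cap v}\in H\subseteq G_u$, so by genericity of $G_u$ we may pick some $g\in G_u$ with $g\le s\rest{u\cap v}$ in $\PP_u$. I claim $q := g\meet s$ is a condition in $\PP_{u\cup v}$. Indeed, coordinate-wise: on $u\setminus v$ we have $s(\alpha)=1$, on $v\setminus u$ we have $g(\alpha)=1$, and on $u\cap v$ we have $g(\alpha)\le s(\alpha)$; by the meet discussion preceding Lemma~\ref{lem:tensor_product}, this suffices for $q\in \PP_{u\cup v}$. Moreover, $q\rest u = g\in G_u$, so $q\in \PP_{u\cup v}/G_u$, and $q\rest v = s$. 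Thus $s$ lies in the range, which is stronger than density.

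For incompatibility-preservation, suppose $q_0, q_1\in \PP_{u\cup v}/G_u$ are such that $q_0\rest v$ and $q_1\rest v$ are compatible in $\PP_v/H$, witnessed by some $t\le q_0\rest v, q_1\rest v$ with $t\in \PP_v/H$. Then $t\rest{u\cap v}\in H\subseteq G_u$, and $q_0\rest u, q_1\rest u\in G_u$, so by genericity pick $g\in G_u$ extending all three of $t\rest{u\cap v}, q_0\rest u, q_1\rest u$ in $\PP_u$. The same coordinate-wise check as above shows $g\meet t\in \PP_{u\cup v}$, and it extends both $q_0$ and $q_1$ (on $u$-coordinates via $g\le q_i\rest u$, on $v$-coordinates via $t\le q_i\rest v$). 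Since $(g\meet t)\rest u = g\in G_u$, this common extension lies in $\PP_{u\cup v}/G_u$, showing $q_0$ and $q_1$ are compatible there.

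The main bookkeeping obstacle is confirming that the various meet operations $g\meet s$ and $g\meet t$ actually produce elements of $\PP_{u\cup v}$ rather than merely formal sequences in $\bigoplus R_\alpha$; this is handled uniformly by ensuring comparability at every coordinate by a prior choice of $g\in G_u$ lying below the relevant $(u\cap v)$-restrictions, and then invoking the induction on $\delta$ mentioned just before Lemma~\ref{lem:tensor_product}. Everything else is routine manipulation of the restriction maps from Lemma~\ref{lem:the_restriction_function}.
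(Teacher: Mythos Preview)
Your approach is essentially the paper's: verify well-definedness, order-preservation, density, and incompatibility-preservation directly. Two comments.

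First, in your density paragraph the claim $q\rest v = s$ is false: on coordinates $\alpha\in u\cap v$ you have $q(\alpha)=g(\alpha)$, not $s(\alpha)$, so in general $q\rest v \lneq s$. This is harmless for the lemma as stated (you still get density), but your parenthetical ``which is stronger than density'' is unjustified by your argument. The paper's argument here is both simpler and actually yields surjectivity: since $\PP_v\subseteq \PP_{u\cup v}$, the element $s$ itself satisfies $s\rest u = s\rest{u\cap v}\in H\subseteq G_u$, hence $s\in \PP_{u\cup v}/G_u$ and $s\mapsto s\rest v = s$. No meet needed.

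Second, for incompatibility-preservation the paper routes through Lemma~\ref{lem:tensor_product} and Porism~\ref{por:compatibility_in_product}, whereas you redo the meet computation by hand. Your direct argument is correct and arguably more transparent; the paper's version just avoids repeating the coordinate-wise check already done in the proof of Lemma~\ref{lem:tensor_product}.
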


\begin{proof}
Let $G\subseteq \PP_u$ be generic, and let $H = G\cap \PP_{u\cap v}$. Since $\PP_{u\cup v}/G \subseteq \PP_{u\cup v}/H$, and we noticed that if $q\in \PP_{u\cup v}/H$ then $q\rest v \in \PP_v/H$, the map $i$ defined on $\PP_{u\cup v}/G$ defined by $q\mapsto q\rest v$ is indeed into $\PP_v/H$. It is order-preserving.

The map $i$ is onto $\PP_v/H$. For let $p\in \PP_v/H$; so $p\rest u = p\rest{u\cap v} \in H\subseteq G$, so $p\in \PP_{u\cup v}/G$, and $p = i(p)$. It remains to show that $i$ preserves incompatibility. 

Let $j\colon \PP_{u\cup v}/H\to (\PP_u/H)\times (\PP_v/H)$ be the dense embedding $q\mapsto (q\rest u,q\rest v)$. For $q\in \PP_{u\cup v}/H$, we have $q\rest u\in G$ if and only if $q\in \PP_{u\cup v}/G$ and so $j^{-1}[G\times (\PP_v/H)] = \PP_{u\cup v}/G$. 

Let $r_0,r_1\in \PP_{u\cup v}/G$, and suppose that $i(q_0)$ and $i(q_1)$ are compatible in $\PP_v/H$.  Since $r_0\rest u,r_1\rest u\in G$, it follows that $j(r_0)$ and $j(r_1)$ are compatible in $G\times (\PP_v/H)$; let $(g,p)\in G\times (\PP_v/H)$ extend both $j(r_0)$ and $j(r_1)$. By Porism~\ref{por:compatibility_in_product} there is some $h\in H$ such that $r=(h\meet g)\meet p\in \PP_{u\cup v}$. Since $(h\meet g)\in G$, $r\in \PP_{u\cup v}/G$ extends both $r_0$ and $r_1$. 
\end{proof}

\begin{corollary}\label{cor:below_an_inaccessible_relent}
Let $u\subseteq v\subseteq \delta$ be $\uu$-closed, let $\alpha<\delta$ and suppose that $u\cap [\alpha,\delta) = v\cap [\alpha,\delta)$. Then in $V^{\PP_u}$, $\PP_v/\PP_u$ is equivalent to $\PP_{v\cap \alpha}/\PP_{u\cap \alpha}$. 
\end{corollary}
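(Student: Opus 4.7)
The plan is to derive this corollary as a direct application of Lemma~\ref{lem:tensor_2}, by instantiating it with the right pair of $\uu$-closed sets rather than with $u$ and $v$ themselves. Applying Lemma~\ref{lem:tensor_2} with $u$ kept as $u$ but $v$ replaced by $v\cap \alpha$ is the natural choice, since the hypothesis says that the two sets differ only below $\alpha$.

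First I would verify that $v\cap \alpha$ is $\uu$-closed. This was recorded immediately after Definition~\ref{def:u-closure}: the intersection of any $\uu$-closed set with an ordinal is again $\uu$-closed. Since $v$ is $\uu$-closed, so is $v\cap \alpha$.

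Next I would carry out the little piece of set-theoretic bookkeeping that lets the substitution do its job. From $u\subseteq v$ we get $u\cup(v\cap \alpha)\subseteq v$; for the reverse inclusion, any $\beta\in v$ either lies in $v\cap\alpha$, or else $\beta\in v\cap[\alpha,\delta) = u\cap[\alpha,\delta)\subseteq u$. Hence $v = u\cup(v\cap \alpha)$. Likewise, using $u\subseteq v$, $u\cap(v\cap\alpha) = (u\cap v)\cap \alpha = u\cap \alpha$.

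Now Lemma~\ref{lem:tensor_2}, applied to the $\uu$-closed sets $u$ and $v\cap \alpha$, says that in $V^{\PP_u}$ the map $q\mapsto q\rest{v\cap\alpha}$ is a dense embedding of $\PP_{u\cup(v\cap\alpha)}/\PP_u = \PP_v/\PP_u$ into $\PP_{v\cap\alpha}/\PP_{u\cap(v\cap\alpha)} = \PP_{v\cap\alpha}/\PP_{u\cap\alpha}$. A dense embedding gives forcing equivalence, which is exactly the conclusion. There is no real obstacle: the substantive content was proved in Lemma~\ref{lem:tensor_2}, and what remains is just to check that trimming $v$ down to $v\cap\alpha$ preserves both $\uu$-closedness and the union/intersection identities needed to match the lemma's template.
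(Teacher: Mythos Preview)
Your proof is correct and follows exactly the paper's approach: apply Lemma~\ref{lem:tensor_2} to the pair $(u, v\cap\alpha)$, after checking the set identities $v = u\cup(v\cap\alpha)$ and $u\cap(v\cap\alpha) = u\cap\alpha$. The paper's proof is the one-line remark ``Immediate from Lemma~\ref{lem:tensor_2}, since $v = u\cup (v\cap \alpha)$ and $u\cap \alpha = u \cap (v\cap \alpha)$,'' so you have simply spelled out the same argument in more detail.
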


\begin{proof}
Immediate from Lemma~\ref{lem:tensor_2}, since $v = u\cup (v\cap \alpha)$ and $u\cap \alpha = u \cap (v\cap \alpha)$. 
\end{proof}

\begin{corollary}\label{cor:2-dimensional_completeness}
Let $u\subseteq v\subseteq \delta$ be $\uu$-closed. Let $\beta<\delta$. Then in $V^{\PP_u}$, $\PP_{v\cap\beta}/\PP_{u\cap \beta}\compembed \PP_v/\PP_u$ and in $V^{\PP_{u\cup (v\cap \beta)}}$, 
\[ \frac{\PP_v/\PP_u}{\PP_{v\cap \beta}/\PP_{u\cap \beta}} = \frac{\PP_v}{\PP_{u\cup (v\cap \beta)}}.\]
\end{corollary}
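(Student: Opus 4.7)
The plan is to prove the two assertions in turn by direct calculation with the restriction maps supplied by Lemma~\ref{lem:the_restriction_function}. For the complete embedding, the natural candidate for the restriction of $\PP_v/\PP_u$ to $\PP_{v\cap\beta}/\PP_{u\cap\beta}$ is $q\mapsto q\rest{v\cap\beta}$. For the quotient equality, I will use Lemma~\ref{lem:tensor_product} to describe a generic for $\PP_{u\cup(v\cap\beta)}$ as a ``join'' of generics for $\PP_u$ and $\PP_{v\cap\beta}$ agreeing on $\PP_{u\cap\beta}$, and then compare the two subsets of $\PP_v$ defining each side pointwise. Throughout I rely on two small observations: that for $q\in \PP_{v\cap\beta}$ the entries above $\beta$ are trivial, so $q\rest u = q\rest{u\cap\beta}$; and that $u\cap(v\cap\beta)=u\cap\beta$ because $u\subseteq v$.

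For the first assertion, I will fix a generic $G\subseteq \PP_u$ and set $H = G\cap \PP_{u\cap\beta}$. The first observation above shows $\PP_{v\cap\beta}/H\subseteq \PP_v/G$. Using Lemma~\ref{lem:the_restriction_function} applied to $v\cap\beta\subseteq v$, the assignment $q\mapsto q\rest{v\cap\beta}$ lands in $\PP_{v\cap\beta}$ and is order-preserving, and is the identity on $\PP_{v\cap\beta}/H$; a short calculation $(q\rest{v\cap\beta})\rest{u\cap\beta} = q\rest{u\cap\beta} = (q\rest u)\rest{u\cap\beta}\in H$ shows it maps $\PP_v/G$ into $\PP_{v\cap\beta}/H$. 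The only non-routine item is the compatibility clause: given $q\in \PP_v/G$ and $p\le q\rest{v\cap\beta}$ in $\PP_{v\cap\beta}/H$, I take the witness $r=p\meet q$ (which is defined since $p\le q$ coordinate-wise and lies in $\PP_v$ by the discussion preceding Lemma~\ref{lem:tensor_product}); its restriction to $u$ equals $p\rest{u\cap\beta}\meet q\rest u\in G$, so $r\in \PP_v/G$ extends both $p$ and $q$.

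For the second assertion I will work in $V^{\PP_{u\cup(v\cap\beta)}}$ with generic $K$, inducing generics $G\subseteq \PP_u$, $G'\subseteq \PP_{v\cap\beta}$, and $H\subseteq \PP_{u\cap\beta}$ by intersection. Applying Lemma~\ref{lem:tensor_product} to the $\uu$-closed sets $u$ and $v\cap\beta$ (whose intersection is $u\cap\beta$) identifies $K$ with $\{r\in \PP_{u\cup(v\cap\beta)}\,:\, r\rest u\in G \andd r\rest{v\cap\beta}\in G'\}$. The defining condition for $q\in \PP_v/\PP_{u\cup(v\cap\beta)}$, namely $q\rest{u\cup(v\cap\beta)}\in K$, therefore unpacks (using $(q\rest{u\cup(v\cap\beta)})\rest u = q\rest u$ and similarly for $v\cap\beta$) to the conjunction of $q\rest u\in G$ (i.e., $q\in \PP_v/G$) and $q\rest{v\cap\beta}\in G'$; the latter is exactly the condition that the image of $q$ under the restriction map from the first part lies in $G'$, which is the definition of membership in $(\PP_v/G)/(\PP_{v\cap\beta}/H)$. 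Since the orders on both sides are inherited from $\PP_v$, equality of partial orders follows.

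The main obstacle should be nothing more than careful bookkeeping of which generic lives in which intermediate extension, together with the systematic use of the identifications $u\cap(v\cap\beta)=u\cap\beta$ and $q\rest u = q\rest{u\cap\beta}$ for $q\in \PP_{v\cap\beta}$; all the real content has already been absorbed into Lemmas~\ref{lem:the_restriction_function} and~\ref{lem:tensor_product}.
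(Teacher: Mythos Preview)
Your argument is correct, and the route differs from the paper's. The paper proves the complete embedding indirectly: it invokes Lemma~\ref{lem:tensor_2} to identify $\PP_{v\cap\beta}/\PP_{u\cap\beta}$ with $\PP_{u\cup(v\cap\beta)}/\PP_u$ via a dense embedding, then uses Fact~\ref{fact:3_in_a_row} applied to the chain $\PP_u\compembed\PP_{u\cup(v\cap\beta)}\compembed\PP_v$ to obtain $\PP_{u\cup(v\cap\beta)}/\PP_u\compembed\PP_v/\PP_u$, and composes. For the quotient identity it appeals to Fact~\ref{fact:dense_then_isomorphic_quotient}. You instead verify the restriction axioms for $q\mapsto q\rest{v\cap\beta}$ by hand and, for the quotient, use Lemma~\ref{lem:tensor_product} to describe the generic $K$ explicitly and compare the two membership conditions coordinate by coordinate. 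Your approach is more self-contained (it avoids Lemma~\ref{lem:tensor_2} and Facts~\ref{fact:3_in_a_row} and~\ref{fact:dense_then_isomorphic_quotient}), at the cost of redoing a small compatibility check; the paper's approach is more modular. One small point worth making explicit in your write-up is that $G'=K\cap\PP_{v\cap\beta}$ really is $\PP_{v\cap\beta}/H$-generic over $V[G]$; this follows from the product description of Lemma~\ref{lem:tensor_product}, but should be stated since it is what licenses interpreting the left-hand quotient in $V[K]$.
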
	

\begin{proof}
For neatness, let $\bar v = v\cap \beta$ and $\bar u = u\cap \beta = u\cap \bar v$. First, let $G\subset \PP_u$ be generic and $H = G\cap \PP_{\bar u}$. The previous lemmas show that in $V[G]$, $\PP_{\bar v}/H \compembed \PP_{\bar v\cup u}/G$ and $\PP_{\bar v}/H \sim \PP_{\bar v\cup u}/G$. Since $\PP_u \compembed \PP_{u\cup \bar v}\compembed \PP_v$, Fact~\ref{fact:3_in_a_row} shows that $\PP_{\bar v\cup u}/G\compembed \PP_v/G$, so overall, $\PP_{\bar v}/H \compembed \PP_{v}/G$. But also, Fact~\ref{fact:dense_then_isomorphic_quotient} shows that in $V[G]^{\PP{\bar v\cup u}}$, 
\[ \frac{\PP_v/G}{\PP_{\bar v\cup u}/G} = \frac{\PP_v/G}{\PP_{\bar v}/H}.\]
Let $K\subset \PP_{\bar v\cup \bar u}/G$ be generic over $V[G]$, and let $\bar K = K\cap \PP_{\bar v}$. Then 
\[ \frac{\PP_v/G}{\PP_{\bar v}/H}[K] = \PP_v / \bar K,\]
and
\[ \frac{\PP_v/G}{(\PP_{\bar v\cup u}/G)}[K] = \PP_v/K.\]
So $\PP_v / K = \PP_v /\bar K$, and this is what needs to be shown. 
\end{proof}

Finally, we show:

\begin{lemma}\label{lem:limit_of_quotients}
Suppose that $c\subseteq \delta$ is unbounded in $\delta$. Let $u\subseteq v\subseteq \delta$ be $\uu$-closed. Then in $V^{\PP_u}$, 
\[ \PP_v/\PP_u = \bigcup_{\beta\in c} \PP_{v\cap \beta} / \PP_{u\cap \beta}. \]
\end{lemma}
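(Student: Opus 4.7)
The plan is to fix a $\PP_u$-generic filter $G$ and to verify the equality as sets in $V[G]$, which will suffice since the orderings on both sides are inherited from $\PP_v$. For each $\beta\in c$ write $H_\beta = G\cap \PP_{u\cap\beta}$; since $u\cap\beta \subseteq u$ is $\uu$-closed, Lemma~\ref{lem:the_restriction_function} gives $\PP_{u\cap\beta}\compembed \PP_u$, and hence $H_\beta$ is $\PP_{u\cap\beta}$-generic over $V$. So $\PP_{v\cap\beta}/\PP_{u\cap\beta}$, as interpreted in $V[G]$, is the set $\{q\in \PP_{v\cap\beta} : q \text{ is compatible in } \PP_{v\cap\beta} \text{ with every } p\in H_\beta\}$.

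For the inclusion $\supseteq$, I would invoke Corollary~\ref{cor:2-dimensional_completeness} with the parameters $u$, $v$ and $\beta$, which gives that $\PP_{v\cap\beta}/\PP_{u\cap\beta}$ completely embeds into $\PP_v/\PP_u$ in $V[G]$. Tracing the chain of dense embeddings produced in Lemmas~\ref{lem:tensor_product} and~\ref{lem:tensor_2} that underlie that corollary shows that the embedding is in fact the natural inclusion of conditions: it sends $q\in \PP_{v\cap\beta}$ to itself in $\PP_v$. So any $q$ on the right-hand side automatically sits in $\PP_v/G$.

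For $\subseteq$, let $q\in \PP_v/G$. Since $\bar \PP$ has finite support, $\supp(q)$ is a finite subset of $v$ and hence bounded in $\delta$; using the unboundedness of $c$, pick $\beta\in c$ with $\beta > \sup\supp(q)$, so that $q\in \PP_{v\cap\beta}$. To witness $q\in \PP_{v\cap\beta}/H_\beta$, take an arbitrary $p\in H_\beta\subseteq G$; by hypothesis $p$ and $q$ are compatible in $\PP_v$, with common extension $r\in \PP_v$ say. Since $v\cap\beta$ is $\uu$-closed, Lemma~\ref{lem:the_restriction_function} gives that $r\rest{v\cap\beta}\in \PP_{v\cap\beta}$, and since $p,q\in \PP_{v\cap\beta}$ they equal their own restrictions, so $r\rest{v\cap\beta}\le p,q$ in $\PP_{v\cap\beta}$.

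There is no substantive obstacle; the only piece that needs care is checking that the complete embedding produced by Corollary~\ref{cor:2-dimensional_completeness} really is the identity on conditions, which is why I would unravel the two lemmas feeding into that corollary rather than treat it as a black box. Everything else is bookkeeping using finite support and the restriction maps of Lemma~\ref{lem:the_restriction_function}.
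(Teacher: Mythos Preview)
Your argument is correct and follows the same skeleton as the paper's: fix a generic $G\subset\PP_u$, use finite support to place each $q\in\PP_v$ inside some $\PP_{v\cap\beta}$, and verify membership in the appropriate quotient. The paper, however, avoids both your detour through Corollary~\ref{cor:2-dimensional_completeness} for the inclusion $\supseteq$ and your compatibility-and-restrict argument for $\subseteq$, by using throughout the characterisation $q\in\QQ/G \iff i(q)\in G$ (for $i$ the restriction map) established just before Fact~\ref{fact:generic_compatibility_in_quotient}. With that in hand, both directions become one-liners: for $q\in\PP_{v\cap\beta}$ one has $q\rest u = q\rest{u\cap\beta}$ since $\supp(q)\subset\beta$, so $q\rest{u\cap\beta}\in G_\beta$ iff $q\rest u\in G$. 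This makes the unravelling of Lemmas~\ref{lem:tensor_product} and~\ref{lem:tensor_2} that you flag as the delicate point entirely unnecessary.
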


\begin{proof}
This is only interesting if $\delta$ is a limit ordinal. Let $G\subset \PP_u$ be generic; for $\beta\in c$, let $G_\beta = G\cap \PP_{u\cap \beta}$. We already know that $\PP_{v\cap \beta}/ G_\beta \subseteq \PP_v / G$. Let $p\in \PP_v / G$. Since $\PP_v = \bigcup_{\beta\in c} \PP_{v\cap \beta}$, for some $\beta\in c$ we have $p\in \PP_{v\cap \beta}$. Since $p\rest {u\cap \beta} = (p\rest u)\rest{u\cap \beta}$ and $p\rest \beta\in G$, we have $p\rest{u\cap \beta}\in G_\beta$ and so $p\in \PP_{v\cap \beta}/G_\beta$. 
\end{proof}

\section{$S$-iterations} \label{sec:S-iterations}

Fix an ordinal $\delta$ and a set $S\subseteq S^\delta_{\aleph_1}$. We wish to define the class $\PPP_\delta(S)$ of iterations of length $\delta$ which are ``mostly Cohen'' but on elements of $S$ are allowed to deviate from being precisely Cohen. 
%
%

\begin{definition}\label{def:S-memory_template}
	A {memory template $\uu$ of length $\delta$} is an \emph{$S$-memory template} if:
	\begin{enumerate}
		\item For all $\alpha\in \delta\setminus S$, $\uu_\alpha = \emptyset$;
		\item For all $\alpha$, $|\uu_\alpha|\le \aleph_1$; and
		\item For all $\alpha\in S$,
		every $\beta\in S\cap \alpha$ which is a limit point of $\uu_\alpha$ is an element of~$\uu_\alpha$. 
	\end{enumerate}
\end{definition}

\begin{definition}\label{def:S-iteration}
	Let $\uu$ be an $S$-memory template of length $\delta$. A~$\uu$-iteration $\bar \PP$ is a \emph{$\uu$-quasi-Cohen} iteration if:
	\begin{enumerate}
		\item For all $\alpha\in \delta\setminus S$, $\QQ_\alpha = \Cohen$ is 1-dimensional Cohen forcing;
		\item For all $\alpha<\delta$, $R_\alpha\subset H_{\w_1}$ and $|R_\alpha|\le \aleph_1$ (where recall that $R_\alpha$ is the ambient partial ordering from which $\QQ_\alpha$ is taken as a subset); and
		\item For all $\alpha\in S$, for all $\beta\in \alpha\setminus S$, 
		$\PP_{\uu_\alpha\cup \{\alpha\}} / \PP_{\uu_\alpha\cap \beta} $
		is equivalent to a Cohen algebra. 
	\end{enumerate}
	We let $\PPP_\delta(S)$ denote the set of iterations $\bar \PP$ of length $\delta$ which are $\uu$-quasi-Cohen iterations for some $S$-memory template $\uu$. 
\end{definition}
\medskip

Having defined $\PPP_\delta(S)$, we note that the definition is upward absolute provided $\aleph_1$ does not change; this gives us~\eqref{item:absoluteness} of Proposition~\ref{prop:main}. Let $\bar \PP\in \PPP_\delta(S)$. If $\delta$ is an inaccessible cardinal then the fact that $R_\alpha\subset H_{\w_1}$ for all $\alpha<\delta$ implies that $\PP_\delta\subset H_\delta$. Further, since in this case $|\delta\setminus S| = \delta$, and each $\QQ_\alpha$ for $\alpha\notin S$ adds a Cohen real, in $V^{\PP_\delta}$, $2^{\aleph_0}\ge \delta$. To show that $2^{\aleph_0}\le \delta$ in $V^{\PP_\delta}$ we prove a general lemma which will be useful later as well. 

\begin{lemma}\label{lem:get_it_down_to_aleph_1}
	Let $\delta$ be any ordinal and $S\subseteq S^\delta_{\aleph_1}$. Let $\bar \PP\in \PPP_\delta(S)$, and let $\eta$ be a $\PP_\delta$-name for a real. Then there is some $\uu$-closed set $u\subset \delta$ of size at most $\aleph_1$ such that $\eta$ is a $\PP_v$-name. 
\end{lemma}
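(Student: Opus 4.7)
The strategy has two steps: first use the ccc of $\PP_\delta$ to find a countable $A_0\subseteq \delta$ such that $\eta$ is essentially a $\PP_{A_0}$-name; then close $A_0$ under the memory template to obtain a $\uu$-closed set of size $\aleph_1$.

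I first claim that $\PP_\delta$ is ccc, proved by induction on $\delta$. The only non-trivial case is a successor $\alpha+1$ with $\alpha\in S$. Taking $\beta=0\in \alpha\setminus S$ (which lies in $\alpha\setminus S$ since $\cf(0)\ne \aleph_1$) in Definition~\ref{def:S-iteration}(3), $\PP_{\uu_\alpha\cup\{\alpha\}}/\PP_{\uu_\alpha\cap 0} = \PP_{\uu_\alpha\cup\{\alpha\}}$ is equivalent to a Cohen algebra, and by Porism~\ref{por:obvious_identity} this Cohen algebra is simply $\PP_{\uu_\alpha}\starr \QQ_\alpha$. Applying Lemma~\ref{lem:tensor_product} to the $\uu$-closed sets $\alpha$ and $\uu_\alpha\cup\{\alpha\}$ (whose intersection is $\uu_\alpha$ and whose union is $\alpha+1$), in $V^{\PP_{\uu_\alpha}}$ the forcing $\PP_{\alpha+1}/\PP_{\uu_\alpha}$ embeds densely into $(\PP_\alpha/\PP_{\uu_\alpha})\times \QQ_\alpha$. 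The first factor is ccc by the inductive hypothesis; the second is (a quotient of) a Cohen algebra, hence Knaster; since Knaster times ccc is ccc, $\PP_{\alpha+1}/\PP_{\uu_\alpha}$ is ccc, and together with the ccc of $\PP_{\uu_\alpha}$ this yields ccc of $\PP_{\alpha+1}$. Limit stages use the standard fact that finite-support iterations of ccc forcings are ccc.

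With $\PP_\delta$ ccc in hand, I represent $\eta$ by a nice name in which, for each $n\in \w$, the value $\eta(n)\in 2$ is decided by a countable maximal antichain of $\PP_\delta$. Each such condition has finite support in $\delta$, so the union $A_0$ of all these supports is a countable subset of $\delta$, and $\eta$ is (equivalent to) a $\PP_{A_0}$-name.

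Finally set $u = A_0\cup \bigcup_{\alpha\in A_0}\uu_\alpha$. Since $|\uu_\alpha|\le \aleph_1$ for each $\alpha$ and $A_0$ is countable, $|u|\le \aleph_1$. To see that $u$ is $\uu$-closed, take $\gamma\in u$: if $\gamma\in A_0$ then $\uu_\gamma\subseteq u$ by construction; otherwise $\gamma\in \uu_\alpha$ for some $\alpha\in A_0$, in which case the transitivity clause of Definition~\ref{def:memory_template} gives $\uu_\gamma\subseteq \uu_\alpha\subseteq u$. Since $A_0\subseteq u$, $\PP_{A_0}\subseteq \PP_u$ and $\eta$ is a $\PP_u$-name, as required. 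The main obstacle is the ccc induction in the second paragraph; the rest is bookkeeping around the transitivity axiom for memory templates.
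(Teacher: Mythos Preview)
Your approach is correct but takes a genuinely different route from the paper. The paper never proves (or uses) the ccc of~$\PP_\delta$; instead it fixes an elementary submodel $M\prec H_\theta$ of size~$\aleph_1$ with $\omega_1\subset M$ and containing $\bar\PP,\uu,\eta$, sets $u=M\cap\delta$, and observes directly that $u$ is $\uu$-closed (each $\uu_\alpha$ has size $\le\aleph_1$ and lies in~$M$, so $\uu_\alpha\subset M$), that $\PP_u=\PP_\delta\cap M$ (finite supports; $R_\alpha\subset M$), and that the dense sets deciding~$\eta\rest n$ reflect to~$\PP_u$. Your argument establishes ccc first, extracts a nice name with countable support~$A_0$, and then closes $A_0$ under the template in one step via the transitivity clause of Definition~\ref{def:memory_template}. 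Your route yields a smaller (countable) ``core'' $A_0$ and the auxiliary fact that $\PP_\delta$ is ccc; the paper's route is shorter and sidesteps the chain-condition analysis entirely.

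One step in your ccc induction deserves a firmer justification. You assert that $\QQ_\alpha$, being a quotient of a Cohen algebra, is Knaster; but preservation of the Knaster property under passage to quotients $\BB/\BB_0$ is not a one-line fact. What you actually need---and what is immediate---is that $\PP_{\uu_\alpha\cup\{\alpha\}}$ itself (not its quotient) has precaliber~$\aleph_1$ in~$V$, so $\PP_{\uu_\alpha\cup\{\alpha\}}\times\PP_\alpha$ is ccc in~$V$. Since $\PP_{\uu_\alpha}\compembed \PP_{\uu_\alpha\cup\{\alpha\}}\times\PP_\alpha$ (via the first coordinate), the quotient $(\PP_{\uu_\alpha\cup\{\alpha\}}\times\PP_\alpha)/\PP_{\uu_\alpha}$ is ccc in $V^{\PP_{\uu_\alpha}}$; this quotient equals $(\PP_{\uu_\alpha\cup\{\alpha\}}/\PP_{\uu_\alpha})\times\PP_\alpha$, which contains $(\PP_{\uu_\alpha\cup\{\alpha\}}/\PP_{\uu_\alpha})\times(\PP_\alpha/\PP_{\uu_\alpha})$ as a suborder with the same incompatibility relation. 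That closes the gap and your induction goes through.
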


\begin{proof}
	Let $\uu$ be an $S$-memory template which witnesses that $\bar\PP\in \PPP_\delta(S)$. Let $M\elementary V$ (you know what we mean) of size $\aleph_1$ such that $\w_1\subset M$, with $\delta,S,\uu,\bar \PP,\eta \in M$. Let $u = M\cap \delta$. First, observe that $u$ is $\uu$-closed. If $\alpha\in u$ then $\alpha\in M$ and so $\uu_\alpha\in M$. Since $\w_1\subset M$ and $|\uu_\alpha| \le \aleph_1$, $\uu_\alpha \subset M$, so $\uu_\alpha\subset u$. 

	We claim that $\PP_u = \PP_\delta \cap M$. In one direction, let $p\in \PP_\delta\cap M$. Then the support $\supp(p)$ of $p$ is finite and is an element of $M$, and so $\supp(p)\subset M$. In the other direction, let $p\in \PP_u$. Since $\supp(p)$ is finite and is a subset of $M$, it is an element of $M$. For each $\alpha\in u$, $R_\alpha\in M$ and since $|R_\alpha|\le \aleph_1$, $R_\alpha \subset M$, and so, for all $\alpha \in \supp(p)$, $p(\alpha)\in M$. It follows that $p\rest {\supp(\alpha)}$ (restriction as a function) is in $M$, whence $p\in M$. 

	For $n<\w$, let $A_n$ be the set of conditions $p\in \PP_\delta$ such that $p\force_{\PP_\delta} \s\prec \eta$ for some $\s\in 2^n$. Then $A_n$ is dense in $\PP_\delta$, and so $A_n\cap M$ is dense in $\PP_\delta\cap M$. Since $\PP_u\compembed \PP_\delta$, we see that $A_n\cap \PP_u$ is dense in $\PP_\delta$, whence $\eta$ is a $\PP_u$-name. 
\end{proof}

\medskip

In order to show that~\eqref{item:Cohen} of Proposition~\ref{prop:main} holds, we prove something stronger, which is necessary elsewhere but also for the inductive proof. 

\begin{definition}\label{def:straight}
	Let $\uu$ be an $S$-memory template of length $\delta$.
	\begin{enumerate}
		\item Let $u\subseteq v\subseteq \delta$ be $\uu$-closed. We say that $v$ is a \emph{$\uu$-straight extension} of $u$ if for all $\beta\in v\cap S$, if $u\cap \beta$ is unbounded in $\beta$ then $\beta\in u$. We write $u\preceq_\uu v$.
		\item A $\uu$-closed set $u\subseteq \delta$ is \emph{$\uu$-straight} if every $\uu$-closed set $v\supseteq u$ is a $\uu$-straight extension of $u$. 
	\end{enumerate}
\end{definition}

Note that:
\begin{itemize}
	\item $\preceq_\uu$ is a transitive relation. Also, if $u\subseteq v \subseteq w$ are $\uu$-closed and $u\preceq_{\uu} w$ then $u\preceq_{\uu} v$. 
	\item A $\uu$-closed set $u\subseteq \delta$ is $\uu$-straight if and only if $u\preceq_\uu \delta$, i.e., if and only if for all $\beta\in S$, if $u\cap \beta$ is unbounded in $\beta$ then $\beta\in u$. 
	\item For all $\alpha\in S$, $\uu_\alpha\cup \{\alpha\}$ is $\uu$-straight. 
	\item If $u\subseteq \delta$ is $\uu$-closed and $\alpha\in \delta\setminus S$ then $u\cap \alpha \preceq_\uu u$. In particular, every $\alpha\in \delta\setminus S$ is $\uu$-straight. 
\end{itemize}

We also remark that in Lemma~\ref{lem:get_it_down_to_aleph_1} we can require $u$ to be $\uu$-straight, not merely $\uu$-closed. This is because every $\uu$-closed subset of $\delta$ of size $\aleph_1$ is contained in a $\uu$-straight subset of $\aleph_1$ of the same size; there are at most $\aleph_1$-many $\delta\in S$ which are limit points of~$u$; adding each of those, and for each such $\delta$, adding $\uu_\delta$, results in a set of size $\aleph_1$; repeating $\w$ times gives the desired~$\uu$-straight set.  
%

\begin{proposition}\label{prop:main_Cohen_propagation}
	Let $\bar \PP \in \PPP_\delta(S)$, witnessed by $\uu$. Let $u\subseteq v \subseteq \delta$ be $\uu$-closed. Suppose that:
	\begin{itemize}
		\item $u \preceq_\uu v$; and 
		\item $S$ does not reflect at any limit point of $v\setminus u$. 
		%
	\end{itemize}
	Then $\PP_v/\PP_u$ is equivalent to a Cohen algebra. 
\end{proposition}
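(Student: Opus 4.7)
The plan is a transfinite induction on the least ordinal $\gamma$ with $v\subseteq \gamma$; the tools I will use are Lemma~\ref{lem:Cohen_extensions} (to extend Cohen embeddings along $\compembed$-chains), Corollary~\ref{cor:2-dimensional_completeness} (to factor quotients), and Lemma~\ref{lem:limit_of_quotients} (to present $\PP_v/\PP_u$ as a directed union at limits). I will also appeal to the fact that ``being equivalent to a Cohen algebra'' is upward absolute between transitive models with the same $\aleph_1$, because $C(X)$ is an absolutely defined dense subset of $\Cohen(X)$, so any dense embedding witnessing equivalence in a smaller universe continues to witness it in an extension.

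For the successor case $\gamma=\gamma'+1$ the interesting configuration is $\gamma'\in v$. If $\gamma'\in u$, I would use Porism~\ref{por:obvious_identity} to write both $\PP_u$ and $\PP_v$ as $\starr\QQ_{\gamma'}$-extensions (with the same $\QQ_{\gamma'}$) of $\PP_{u\cap\gamma'}$ and $\PP_{v\cap\gamma'}$, and then apply Corollary~\ref{cor:2-dimensional_completeness} at $\beta=\gamma'$ to identify $\PP_v/\PP_u$ with $\PP_{v\cap\gamma'}/\PP_{u\cap\gamma'}$, which is handled by induction. If $\gamma'\in(v\setminus u)\setminus S$, then $\QQ_{\gamma'}=\Cohen$ and $\PP_v/\PP_u$ is a Cohen extension of $\PP_{v\cap\gamma'}/\PP_u$ (which is Cohen by induction), hence Cohen by Lemma~\ref{lem:Cohen_extensions}. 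The main case is $\gamma'\in(v\setminus u)\cap S$: using $u\preceq_\uu v$, $u\cap\gamma'$ is bounded in $\gamma'$, so I choose a successor ordinal $\beta$ with $\sup(u\cap\gamma')<\beta<\gamma'$ (so $\beta\notin S$). Definition~\ref{def:S-iteration}(3) gives that $\PP_{\uu_{\gamma'}\cup\{\gamma'\}}/\PP_{\uu_{\gamma'}\cap\beta}$ is Cohen; applying the induction hypothesis to $(\uu_{\gamma'}\cap\beta,\uu_{\gamma'})$---both contained in $\gamma'$, with $\preceq_\uu$ immediate from clause~(3) of Definition~\ref{def:S-memory_template} together with $\beta\notin S$, and non-reflection inherited because every limit point of $\uu_{\gamma'}\cap[\beta,\gamma')$ is a limit point of $v\setminus u$ (using $\uu_{\gamma'}\subseteq v$ and $u\cap\gamma'\subseteq\beta$)---yields that $\PP_{\uu_{\gamma'}}/\PP_{\uu_{\gamma'}\cap\beta}$ is Cohen. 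Lemma~\ref{lem:Cohen_extensions} then gives $\QQ_{\gamma'}$ Cohen in $V^{\PP_{\uu_{\gamma'}}}$, hence in $V^{\PP_{v\cap\gamma'}}$ by upward absoluteness, and a final application of the same lemma to $\PP_u\compembed\PP_{v\cap\gamma'}\compembed\PP_v$ (combined with induction giving $\PP_{v\cap\gamma'}/\PP_u$ Cohen) finishes the step.

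At a limit $\gamma$, if $v\setminus u$ is bounded below $\gamma$ then $u,v$ agree on a final segment of $\gamma$ and Corollary~\ref{cor:below_an_inaccessible_relent} reduces to a smaller ambient. Otherwise $\gamma$ is a limit point of $v\setminus u$, so the non-reflection hypothesis provides an unbounded $c\subseteq\gamma\setminus S$ (a club when $\cf(\gamma)>\aleph_0$; any $\omega$-cofinal sequence suffices when $\cf(\gamma)=\aleph_0$, since $S\subseteq S^\delta_{\aleph_1}$). Lemma~\ref{lem:limit_of_quotients} writes $\PP_v/\PP_u=\bigcup_{\beta\in c}\PP_{v\cap\beta}/\PP_{u\cap\beta}$, each term Cohen by induction; for $\beta<\beta'$ in $c$ the incremental quotient $\PP_{v\cap\beta'}/\PP_{(u\cap\beta')\cup(v\cap\beta)}$ (via Corollary~\ref{cor:2-dimensional_completeness}) is Cohen by induction as well---crucially, $\beta\notin S$ is what makes $(u\cap\beta')\cup(v\cap\beta)\preceq_\uu v\cap\beta'$. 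Lemma~\ref{lem:Cohen_extensions} then lets me choose compatible dense embeddings stagewise into $\Cohen(X_\beta)$ with $X_\beta$ increasing, and finite support makes $C(\bigcup X_\beta)=\bigcup C(X_\beta)$ dense in $\Cohen(\bigcup X_\beta)$, into which $\PP_v/\PP_u$ accordingly densely embeds.

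The main obstacle I expect is the bookkeeping of the non-reflection hypothesis across the induction: each auxiliary pair recursed upon must satisfy that $S$ does not reflect at any limit point of its own ``$v\setminus u$''. This is engineered uniformly in every subcase by using $u\preceq_\uu v$ to push $u\cap\gamma'$ below a single $\beta\notin S$, so that any accumulation of the new set above $\beta$ is automatically an accumulation of the original $v\setminus u$, and inherits the non-reflection property.
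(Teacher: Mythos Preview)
Your limit case and the two easy successor subcases ($\gamma'\in u$, and $\gamma'\in(v\setminus u)\setminus S$) track the paper's argument essentially verbatim, and the bookkeeping you outline for the non-reflection hypothesis is correct.

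The gap is in the main successor case $\gamma'\in (v\setminus u)\cap S$. You argue: Definition~\ref{def:S-iteration}(3) gives $\PP_{\uu_{\gamma'}\cup\{\gamma'\}}/\PP_{\uu_{\gamma'}\cap\beta}$ Cohen; induction gives $\PP_{\uu_{\gamma'}}/\PP_{\uu_{\gamma'}\cap\beta}$ Cohen; ``Lemma~\ref{lem:Cohen_extensions} then gives $\QQ_{\gamma'}$ Cohen in $V^{\PP_{\uu_{\gamma'}}}$''. This last step is a \emph{cancellation} inference: from $C/A$ Cohen and $B/A$ Cohen (with $A\compembed B\compembed C$) you are concluding $C/B$ Cohen. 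Lemma~\ref{lem:Cohen_extensions} does not provide that. The lemma lets you \emph{compose}---extend a dense embedding of $A$ first to $B$, then to $C$---yielding the implication ``$B/A$ Cohen and $C/B$ Cohen $\Rightarrow$ $C/A$ Cohen''. In the other direction, knowing that a fixed dense embedding of $A$ extends separately to $B$ and to $C$ does not tell you that the $B$-extension further extends to $C$; the two extensions need not be compatible. Equivalently, you would need the (unproved, nontrivial) statement that whenever one Cohen algebra is completely embedded in another via an arbitrary complete embedding, the quotient is forced to be Cohen.

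The paper sidesteps cancellation entirely. In this case it sets $y = u\cup\uu_{\gamma'}\cup\{\gamma'\}$, picks $\gamma\in[\sup u,\gamma')\setminus S$, and factors
\[
\PP_u \compembed \PP_{y\cap\gamma} \compembed \PP_y \compembed \PP_v.
\]
The middle quotient $\PP_y/\PP_{y\cap\gamma}$ is identified via Lemma~\ref{lem:tensor_2} with $\PP_{\uu_{\gamma'}\cup\{\gamma'\}}/\PP_{\uu_{\gamma'}\cap\gamma}$ (here one uses $u\subset\gamma$ to compute the intersection), so Definition~\ref{def:S-iteration}(3) applies \emph{directly}, as a single Cohen step rather than as a quotient of two. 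The first and last quotients are Cohen by induction (for the last, $y\preceq_\uu v$ because $u\preceq_\uu v$ and $\uu_{\gamma'}\cup\{\gamma'\}$ is $\uu$-straight, and since $\gamma'\in y$ one is back in the already-handled case $\gamma'\in u\cap v$). Now only compositions are needed, and those are exactly what Lemma~\ref{lem:Cohen_extensions} supplies. Your argument is easily repaired by switching to this decomposition.
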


Note that Proposition~\ref{prop:main_Cohen_propagation} implies~\eqref{item:Cohen} of Proposition~\ref{prop:main}.

\begin{proof}
	By induction on $\delta$. 
	
\smallskip
	
	First suppose that $\delta$ is a limit ordinal. Fix $u\subseteq v\subseteq \delta$ satisfying the hypotheses of the proposition.
	
	Suppose that $v\setminus u$ is bounded below $\delta$; let $\alpha = \sup (v\setminus u)$. By Corollary~\ref{cor:below_an_inaccessible_relent}, in $V^{\PP_u}$, $\PP_v/\PP_u$ is equivalent to $\PP_{v\cap \alpha}/\PP_{u\cap \alpha}$. The conditions of the proposition hold for the pair $(u\cap \alpha,v\cap \alpha)$ and so by induction, $\PP_{v\cap \alpha}/\PP_{u\cap \alpha}$ is equivalent to a Cohen algebra. 
	
	Suppose then that $v\setminus u$ is unbounded below $\delta$. Then~$S$ does not reflect at~$\delta$. Let~$c$ be a closed, unbounded subset of $\delta$ disjoint from $S$ (as mentioned above, this is by definition if $\cf(\delta)\ge \w_1$; otherwise, we use the fact that every element of $S$ has cofinality $\aleph_1$). 
Let $\alpha<\beta$ be elements of $c$. By Corollary~\ref{cor:2-dimensional_completeness}, 
	\[ \frac{\PP_{v\cap \beta}/\PP_{u\cap \beta}}{\PP_{v\cap \alpha}/\PP_{u\cap \alpha}}  = \PP_{v\cap \beta} / \PP_{(v\cap \alpha)\cup (u\cap \beta)}.\]
	Because $\alpha\notin S$, $(v\cap \alpha)\cup (u\cap \beta)\preceq_\uu (v\cap \beta)$. By induction, in $V^{\PP_{u\cap \beta}}$, 
		\[ \frac{\PP_{v\cap \beta}/\PP_{u\cap \beta}}{\PP_{v\cap \alpha}/\PP_{u\cap \alpha}} \]
		is equivalent to a Cohen algebra. Further, if $\beta$ is a limit point of $c$, then by Lemma~\ref{lem:limit_of_quotients},
		\[ \PP_{v\cap \beta}/\PP_{u\cap \beta} = \bigcup_{\alpha\in c\cap \beta} \PP_{v\cap \alpha} / \PP_{u\cap \alpha}. \]

		Let $G\subseteq \PP_u$ be generic; for $\alpha<\delta$, let $G_\alpha = G\cap \PP_{u\cap \alpha}$. Using Lemma~\ref{lem:Cohen_extensions}, by induction on $\alpha\in c\cup\{\delta\}$ we define an increasing and $\subseteq$-continuous sequence of sets $\seq{X_\alpha}$ and and increasing and continuous sequence of dense embeddings $\theta_\alpha\colon \PP_{v\cap \alpha}/G_\alpha\to \Cohen (X_\alpha)$.
		
		\

		Now suppose that the lemma is known for $\delta$. Fix $u\subseteq v\subseteq \delta+1$ satisfying the hypotheses of the proposition. For brevity, let $\bar u = u\cap \delta$ and $\bar v  = v\cap \delta$. By induction, $\PP_{\bar v}/\PP_{\bar u}$ is equivalent to a Cohen algebra. 

If $\delta\notin v$ then $v  = \bar v$ and $u = \bar u$. If $\delta\in u$ then by Corollary~\ref{cor:below_an_inaccessible_relent}, $\PP_{v}/\PP_u \sim \PP_{\bar v}/\PP_{\bar u}$. We suppose, then, that $\delta \in v\setminus u$, so $\bar u = u$.

Now there are two cases. If $\delta\notin S$ then $\PP_v = \PP_{\bar v}\times \Cohen$, so $\PP_v/\PP_u = (\PP_{\bar v}/\PP_u)\times \Cohen$ and so is equivalent to a Cohen algebra.

Suppose that $\delta\in S$. Then $u\preceq_\uu v$ implies that $u$ is bounded below $\delta$. Find some $\gamma\in [\sup u,\delta) \setminus S$ (recall that $S$ does not, for example, contain successor ordinals). Since $v$ is $\uu$-closed, $\uu_\delta\subset v$. We analyse $\PP_v/\PP_u$ in three steps. Let $y = u\cup \uu_\delta\cup \{\delta\}$. 
\begin{enumerate}
	\item $\PP_{y\cap \gamma}/\PP_u$ is equivalent to a Cohen algebra: this follows from induction and the fact that $y\cap \gamma\subseteq v$.
	\item $\PP_{y}/\PP_{y\cap \gamma}$ is equivalent to a Cohen algebra: by Lemma~\ref{lem:tensor_2}, $\PP_y/\PP_{y\cap \gamma}\sim \PP_{\uu_\delta\cup \{\delta\}}/\PP_{\uu_\delta\cap \gamma}$, because $\uu_\delta\cap \gamma = (y\cap \gamma)\cap (\uu_\delta\cup \{\delta\})$ (and the definition of $y$); we then use the assumption that $\bar \PP$ is $\uu$-quasi-Cohen.
	\item $\PP_v/\PP_y$ is equivalent to a Cohen algebra: here we note that since $u\preceq_\uu v$ and $\uu_\delta$ is $\uu$-straight, we have $y\preceq_\uu v$. Certainly $v\setminus y$ is bounded below ordinals at which $S$ reflects as $u\subseteq y$. Since $\delta\in y,v$ we are back in a previous case. \qedhere
\end{enumerate}
\end{proof}

\begin{remark}
	One way to think of Proposition~\ref{prop:main_Cohen_propagation} is by thinking of ``rearrangements'' or ``relistings'' of the coordinates. In the situation described, $v\setminus u$ can be re-ordered so that every initial segment (together with $u$) is $\uu$-closed, and the corresponding partial ordering is equivalent to a Cohen algebra. We do not pursue this formally here. 
\end{remark}

\section{Toward Cohen measurability} \label{sec:toward_measurability}

The following is known; we add a proof for completeness. 

\begin{proposition}\label{prop:jP_is_what_we_need}
	Let $j\colon V\to M = \Ult(V,\mu)$ be a normal ultrafilter embedding witnessing that $\kappa$ is a measurable cardinal. Let $\PP\subset V_\kappa$ be a notion of forcing, and suppose that $\PP\compembed j(\PP)$. Then in $V^\PP$ there is a $\kappa$-complete ideal $I$ on $\kappa$ such that $\Powerset(\kappa)/I\sim j(\PP)/\PP$. 
\end{proposition}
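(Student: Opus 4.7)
The plan is to work in $V^\PP$, use the complete embedding $\PP\compembed j(\PP)$ to lift $j$ generically, and define the ideal $I$ via Boolean values in $\BB(j(\PP)/\PP)$. Specifically, fix $G\subseteq \PP$ generic over $V$ and let $H\subseteq j(\PP)$ be generic over $V$ with $G = H\cap \PP$; standard lifting gives $\tilde j\colon V[G]\to M[H]$ extending $j$, with $\tilde j(G)=H$. For each $A\in\Powerset(\kappa)^{V[G]}$, choose a $\PP$-name $\dot A$ with $\dot A^G = A$, and define
\[
\Phi(A) \,=\, \bigl[\![\,\kappa\in j(\dot A)\,]\!]_{\BB(j(\PP)/G)}.
\]
A routine check (using that two names for $A$ are forced to be equal by some $g\in G$, hence by $j(g)\in H$) shows $\Phi(A)$ is independent of the choice of $\dot A$. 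I would then let $I = \{A\subseteq \kappa : \Phi(A)=0\}$, i.e.\ the sets $A$ for which $\Vdash_{j(\PP)/G} \kappa\notin j(\dot A)$.

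Next I would verify that $\Phi$ is a Boolean homomorphism: elementarity of $\tilde j$ gives $\tilde j(\kappa\setminus A) = j(\kappa)\setminus \tilde j(A)$, $\tilde j(A\cap B) = \tilde j(A)\cap \tilde j(B)$, and similarly for unions, so passing to Boolean values yields $\Phi(\kappa\setminus A) = \neg \Phi(A)$, $\Phi(A\cap B) = \Phi(A)\wedge \Phi(B)$, $\Phi(A\cup B) = \Phi(A)\vee \Phi(B)$. Hence $\Phi$ descends to an injective homomorphism $\bar\Phi\colon \Powerset(\kappa)/I \to \BB(j(\PP)/\PP)$. For $\kappa$-completeness of $I$: given $\lambda<\kappa$ and $\seq{A_\alpha}_{\alpha<\lambda}\in V[G]$ all in $I$, since $\mathrm{crit}(j)=\kappa$ we have $\tilde j(\lambda)=\lambda$ and $\tilde j(\seq{A_\alpha}) = \seq{\tilde j(A_\alpha)}$, so $\tilde j(\bigcup A_\alpha) = \bigcup \tilde j(A_\alpha)$, giving $\Phi(\bigcup A_\alpha) = \bigvee \Phi(A_\alpha) = 0$.

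The main step, and the one that needs the most care, is showing the range of $\bar\Phi$ is dense in $\BB(j(\PP)/\PP)$. Here I would use the ultrapower representation: every $q\in j(\PP)$ has the form $q=j(f)(\kappa)$ for some $f\colon \kappa\to \PP$ in $V$. In $V[G]$, set
\[
A_f \,=\, \{\alpha<\kappa \,:\, f(\alpha)\in G\}.
\]
Then in $V[H]$, $\tilde j(A_f) = \{\beta<j(\kappa) : j(f)(\beta)\in H\}$, so $\kappa\in \tilde j(A_f)$ iff $q\in H$. Translating this equivalence back to Boolean values in $j(\PP)/G$, we get $\Phi(A_f) = q$ as elements of $\BB(j(\PP)/G)$. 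Since the conditions $q\in j(\PP)/G$ are dense in the completion, $\bar\Phi$ has dense range and thus extends to an isomorphism of $\Powerset(\kappa)/I$ with $\BB(j(\PP)/\PP)$.

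The only real subtlety is confirming that $\Phi$ is definable in $V[G]$ (not merely in $V[H]$): the definition via $[\![\kappa\in j(\dot A)]\!]_{\BB(j(\PP)/G)}$ takes place entirely in $V[G]$ once one notes that $j\rest V \in V$ (via the $M$-ultrapower presentation, $j(\dot A)$ is computed from $\dot A$ in $V$), and the quotient $j(\PP)/\PP$ and its completion live in $V[G]$. With $\bar\Phi$ a dense embedding and $I$ a $\kappa$-complete ideal, the proposition follows.
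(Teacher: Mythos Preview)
Your proof is correct and follows essentially the same approach as the paper: both define the map $A\mapsto [\![\kappa\in j(\dot A)]\!]$ into the completion of $j(\PP)/G$, take $I$ to be its kernel, verify it is a $\kappa$-complete Boolean homomorphism, and establish density via the ultrapower representation $q=[f]_\mu$ with $A_f=\{\alpha:f(\alpha)\in G\}$. The only cosmetic difference is that you invoke the lifted embedding $\tilde j\colon V[G]\to M[H]$ to phrase the verifications, whereas the paper works directly with the forcing relation in $M$ and the sets $Y(\vphi)=\{p\in j(\PP)/G:(p\Vdash_{j(\PP)}\vphi)^M\}$; these are equivalent formulations of the same argument.
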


\begin{proof}
	Let $G\subset \PP$ be generic over $V$. In $V[G]$, let $\BB$ be a complete Boolean algebra such that $j(\PP)/G$ is a dense subset of $\BB$ (technically, its separative quotient is, we ignore this point).
	
	For any sentence $\vphi$ of the forcing language for $j(\PP)$ (with names from $M^{j(\PP)}$), let 
	\[ Y(\vphi) = \left\{ p\in j(\PP)/G\,:\, \left(p\force_{j(\PP)} \vphi\right)^M \right\} ,\]
and let $y(\vphi) = \sum^{\BB} Y(A)$. 

	Fact~\ref{fact:generic_compatibility_in_quotient} tells us that for any $\vphi$, $y(\lnot \vphi) = \lnot^{\BB} y(\vphi)$. Further, we note that if $g\in G$ and $g\in Y(\vphi)$ then $y(\vphi) = 1_\BB$, as $g$ is compatible with every $p\in j(\PP)/G$. 
	
	For a $\PP$-name $A$ for a subset of $\kappa$ we consider $y(\kappa\in j(A))$. Let $A$ and $B$ be such $\PP$-names, and suppose that $g\in G$ and $g\force_\PP A\subseteq B$. Since $j(g)=g$, we see that in $M$, $g\force_{j(\PP)} j(A)\subseteq j(B)$, so $y(j(A)\subseteq j(B))=1_{\BB}$. It follows that $y(\kappa\in j(A))\le y(\kappa\in j(B))$. This shows that the map $A\mapsto y(\kappa\in j(A))$ induces a function $\psi$ from $\Powerset(\kappa)^{V[G]}$ to $\BB$. 
	
	The observation above shows that for all $A\in \Powerset(\kappa)^{V[G]}$, $\psi(\kappa\setminus A) = \lnot^{\BB} \psi(A)$. Certainly $\psi(\emptyset) = 0_\BB$ and $\psi(\kappa) = 1_\BB$. 
	
	Let $\gamma<\kappa$, and let $\seq{D_i}_{i<\kappa}$ be a sequence of subsets of $\kappa$ in $V[G]$. We can fix a sequence $\seq{A_i}$ of $\PP$-names for $D_i$; we also fix a $\PP$-names $A_{\join}$ 
for $\bigcup_{i<\gamma} D_i$. 
In $V^{j(\PP)}$, $j(A_\join) = \bigcup_{i<\gamma} j(A_i)$. 
	
	Since for each $k<\gamma$, 
$A_k \subseteq A_\join$ in $V^\PP$, we know that 
$\psi(D_k) \le \psi(\bigcup_{i<\gamma} D_i)$, and so 
$\sum^\BB \psi (D_i) \le \psi(\bigcup_i D_i)$. To get equality, note that by Fact~\ref{fact:generic_compatibility_in_quotient}, $\bigcup_{i<\gamma} Y(\kappa\in j(A_i))$ is dense below each $p\in Y(\kappa\in j(A_\join))$ (in $j(\PP)/G$).

This shows that $\psi$ is a $\kappa$-complete Boolean homomorphism. So $I = \ker (\psi)$ is a $\kappa$-complete ideal on $\kappa$ in $V[G]$, and $\psi$ induces an embedding of $\Powerset(\kappa)/I$ into $\BB$ in $V[G]$. It remains to show that $\psi$ is dense. 

Let $p\in j(\PP)/G$. Recall that $\mu$ is the normal ultrafilter generating $j$; so $p = [\bar p]_\mu$ for some $\bar p \colon \kappa\to \PP$; write $\bar p = \seq{p_\alpha}_{\alpha<\kappa}$. Define a $\PP$-name $A$ for a subset of $\kappa$ by letting $A = \left\{ (p_\alpha,\alpha)  \,:\, \alpha<\kappa \right\}$. Then for all $\alpha<\kappa$ and $q\in \PP$, $q\force_\PP \alpha\in A$ if and only if $q\force_\PP p_\alpha\in G$. Hence in $M$, since $\kappa = [\id]_\mu$, for all $q\in j(\PP)/G$,  $q\force_{j(\PP)} \kappa\in j(A)$ if and only if $q\force_{j(\PP)} p\in G_{j(\PP)}$ if and only if in $\BB$, $q\le p$. Hence $\psi(A[G]) = p$.
\end{proof}

For the following proposition, let $\bar \PP\in \PPP_\kappa(S)$ and let $j\colon V\to M$ be an elementary embedding with critical point $\kappa$. Write $\PP$ for $\PP_\kappa$. Since every $p\in \PP$ has finite support (in particular, support bounded below $\kappa$), we see that essentially  $j\rest{\PP} = \id_{\PP}$ and that in $M$, $\PP = j(\PP)_\kappa$. Hence $\PP\compembed j(\PP)$ (in $M$, but this is absolute).

\begin{proposition}\label{prop:supercompact}
	Suppose that $\kappa$ is $2^\kappa$-supercompact. Suppose that $S$ only reflects at inaccessible cardinals. Let $\bar \PP\in \PPP_\kappa(S)$. Then there is a elementary embedding $j\colon V\to M$ given by a normal ultrafilter on $\kappa$ such that $j(\PP)/\PP$ is equivalent to a Cohen algebra. Hence, in $V^{\PP}$, $\kappa$ is real-valued Cohen. 
\end{proposition}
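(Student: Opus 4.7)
The plan is to apply Proposition~\ref{prop:main_Cohen_propagation} in $V$ to $j(\bar\PP)$, on the interval $[\kappa, j(\kappa))$, and then to invoke Proposition~\ref{prop:jP_is_what_we_need}. I would let $j_{\textup{sc}}\colon V\to N$ witness $2^\kappa$-supercompactness (so ${}^{2^\kappa}N\subseteq N$), define the normal ultrafilter $\mu=\{A\subseteq\kappa : \kappa\in j_{\textup{sc}}(A)\}$ derived from $j_{\textup{sc}}$, and set $j=j_\mu\colon V\to M$; then $j_{\textup{sc}}=k\circ j$ for the standard factor map $k\colon M\to N$, which is the identity on $\kappa+1$. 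Because every condition in $\PP=\PP_\kappa$ has support bounded below $\kappa$, we have $j(p)=p$ for $p\in\PP$, so $\PP=j(\PP)_\kappa$ and $\PP\compembed j(\PP)$ via Lemma~\ref{lem:the_restriction_function}.

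By elementarity, $j(\bar\PP)\in\PPP_{j(\kappa)}(j(S))$ in $M$, witnessed by the memory template $j(\uu)$; upward absoluteness (Proposition~\ref{prop:main}\eqref{item:absoluteness}, using $\aleph_1^M=\aleph_1^V$) promotes this to $V$. To apply Proposition~\ref{prop:main_Cohen_propagation} in $V$ with $u=\kappa$ and $v=j(\kappa)$ it suffices to verify (i) $\kappa\preceq_{j(\uu)}j(\kappa)$ and (ii) in $V$, $j(S)$ reflects no-where at any limit point of $[\kappa,j(\kappa))$. Condition (i) is routine: $\kappa\notin j(S)$, since $j(S)\subseteq S^{j(\kappa)}_{\aleph_1}$ in $M$ and $\cf(\kappa)=\kappa>\aleph_1$; and for any $\beta\in j(S)$ with $\beta>\kappa$ we have $\kappa\cap\beta=\kappa$ bounded in $\beta$.

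The hard part is (ii). Fixing $\beta\in(\kappa,j(\kappa)]$ of uncountable $V$-cofinality, I would need to exhibit a $V$-club $C\subseteq\beta$ disjoint from $j(S)$. By elementarity applied to $j$, the reflection points of $j(S)$ below $j(\kappa)$ in $M$ lie inside the $M$-inaccessibles, so whenever $\beta<j(\kappa)$ is not $M$-inaccessible an $M$-club witnessing $M$-non-reflection at $\beta$ is already a $V$-club. The remaining hard cases are $\beta$ an $M$-inaccessible in $(\kappa,j(\kappa))$, and $\beta=j(\kappa)$ itself; for these I would pass through $k$. The ultrapower representation writes such a $\beta<j(\kappa)$ as $[f]_\mu$ with $f\colon\kappa\to\kappa$ such that $f(\alpha)$ is $V$-inaccessible for $\mu$-almost every $\alpha$, and then $k(\beta)=j_{\textup{sc}}(f)(\kappa)$ is $N$-inaccessible; for $\beta=j(\kappa)$, $k(\beta)=j_{\textup{sc}}(\kappa)$. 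The closure ${}^{2^\kappa}N\subseteq N$ makes $V$ and $N$ agree on clubs (of $V$-cofinality at most $2^\kappa$) inside ordinals of $V$-size at most $2^\kappa$; combined with the hypothesis that $S$ reflects only at inaccessibles below $\kappa$, this should yield a $V$-club $D\subseteq k(\beta)$ disjoint from $j_{\textup{sc}}(S)=k(j(S))$, whose preimage $k^{-1}[D]$ (or a cofinal refinement thereof inside $\beta$) is then a $V$-club in $\beta$ disjoint from $j(S)$. This calibration of the transfer between $N$-clubs and $V$-clubs at the $M$-inaccessible levels is where the bulk of the work lies.

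With (i) and (ii) in hand, Proposition~\ref{prop:main_Cohen_propagation} gives $j(\PP)/\PP\sim\Cohen(Y)$ for some $Y\in V$. Since $j=j_\mu$ is a normal-ultrafilter embedding and $\PP\subset V_\kappa$, Proposition~\ref{prop:jP_is_what_we_need} produces in $V^\PP$ a $\kappa$-complete ideal $I$ on $\kappa$ with $\Powerset(\kappa)/I$ isomorphic to the completion of $j(\PP)/\PP$, hence to a Cohen algebra, witnessing that $\kappa$ is Cohen-measurable in $V^\PP$.
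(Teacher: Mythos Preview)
Your setup via the factor map $k$ is right, but step~(ii) --- showing that $j(S)$ reflects nowhere in $(\kappa, j(\kappa)]$ in $V$ --- is not a matter of ``calibration''; it is where the argument breaks. At an $M$-inaccessible $\beta\in(\kappa, j(\kappa))$ the image $k(\beta)$ is $N$-inaccessible, and the hypothesis on~$S$, transported by elementarity of $j_{\textup{sc}}$, says only that $j_{\textup{sc}}(S)$ reflects \emph{only at $N$-inaccessibles} --- it does not rule out reflection at $k(\beta)$. So there is no $N$-club of $k(\beta)$ disjoint from $j_{\textup{sc}}(S)$ to transfer, and closure of $N$ under $2^\kappa$-sequences gives you no mechanism to manufacture one. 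In fact for a general $S$ satisfying the hypotheses (for instance one with $S\cap\lambda$ stationary in every inaccessible $\lambda<\kappa$) there is no reason to expect $j(S)$ to be non-reflecting in $V$ throughout $(\kappa, j(\kappa)]$; arranging precisely this in the measurable case is what the preparation forcing of Proposition~\ref{prop:main}\eqref{item:preparation} is for.

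The paper avoids the problem by applying Proposition~\ref{prop:main_Cohen_propagation} not to the pair $(\kappa, j(\kappa))$ in $V$, but inside $N$ to the pair $(\kappa, v)$ with $v = k[j(\kappa)]$. Since $|j(\kappa)|^V \le 2^\kappa$ and ${}^{2^\kappa}N\subseteq N$, we have $v\in N$ and $|v|^N \le 2^\kappa$; any $N$-inaccessible above $\kappa$ exceeds $(2^\kappa)^N = 2^\kappa$, so $v\setminus\kappa$ is bounded below every such inaccessible. Hence no limit point of $v\setminus\kappa$ is $N$-inaccessible, and non-reflection of $j_{\textup{sc}}(S)$ at those limit points is immediate from the hypothesis. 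One checks that $v$ is $j_{\textup{sc}}(\uu)$-closed and that $\kappa\preceq_{j_{\textup{sc}}(\uu)} v$, so Proposition~\ref{prop:main_Cohen_propagation} (applied in $N$) gives that $j_{\textup{sc}}(\PP)_v/\PP$ is Cohen; this is upward absolute to $V$. The proof concludes by verifying that $k\rest{j(\PP)}$ is an isomorphism of $j(\PP)$ onto $j_{\textup{sc}}(\PP)_v$ fixing $\PP$, which uses that each $j(\uu)_\alpha$ has size $\le\aleph_1$, so $k$ carries it bijectively onto $j_{\textup{sc}}(\uu)_{k(\alpha)}$.
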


Note that the dense embedding of $j(\PP)/\PP$ into a Cohen algebra is not necessarily in $M^\PP$. 

\begin{proof}
Let $i\colon V\to N$ be an elementary embedding with critical point $\kappa$ such that $i(\kappa)> 2^\kappa$ and $N^{2^\kappa}\subset  N$. Let $U$ be the ultrafilter on $\kappa$ generated by $i$, i.e.\ $A\in U$ iff $\kappa\in i(A)$. Let $M = \Ult(V,U)$ be the transitive collapse of $V^\kappa/U$ and let $j\colon V\to M$ be the associated elementary embedding. The triangle can be completed by an elementary embedding $k\colon M\to N$, defined by $k([f]_U) = (i(f))(\kappa)$. So $i = k\circ j$. 
	
	Since $\kappa^\kappa\subset N$ and $U\in N$, we see that $k\rest{j(\kappa)} \in N$, and so $v = k[j(\kappa)]\in N$. In~$N$, $|v| = 2^\kappa$, so in $N$, $v\setminus \kappa$ is an Easton set (it is bounded below every inaccessible cardinal). The set $v$ is $i(\uu)$-closed, where $\uu$ witnesses that $\bar \PP\in \PPP_\kappa(S)$. For let $\gamma\in v$; let $\alpha = k^{-1}(\gamma)$. Then $k(j(\uu)_\alpha) = i(\uu)_\gamma$, and since $|j(\uu)_\alpha| = \aleph_1 < \kappa$, $k\rest {j(\uu)_\alpha}$ is a bijection between $j(\uu)_\alpha$ and $i(\uu)_\gamma$. It follows that $i(\uu)_\gamma\subset k[j(\kappa)] = v$. 	
	
	 Also note that $\kappa\notin i(S)$ ($\kappa$ is regular in $N$), and so $\kappa \preceq_{i(\uu)} v$. In $N$, $i(S)$ only reflects at inaccessible cardinals and $v\setminus \kappa$ is bounded below these. Hence, the conditions of Proposition~\ref{prop:main_Cohen_propagation} hold and we conclude that in $N$, $\PP\compembed i(\PP)_v$ and $i(\PP)_v/\PP$ is a Cohen algebra; this is upwards absolute, so holds in $V$. 
	
	We claim that $j(\PP)$ and $i(\PP)_v$ are isomorphic over $\PP$. Certainly, $k[j(\PP)]\subseteq i(\PP)$ and $k\rest{j(\PP)}$ is order-preserving. However, since $j(\PP)$ is a finite support iteration, for each $p\in j(\PP)$, $k$ pointwise maps the support of $p$ to the support of $k(p)$, and so $k(p)\in i(\PP)_v$. 
	
	It remains to show that $k$ is onto $i(\PP)_v$. By induction on $\delta\le j(\kappa)$, we show that $k\rest{j(\PP)_\delta}$ is onto $i(\PP)_{k[\delta]}$. This is preserved at limit stages since for limit $\delta\le i(\kappa)$, $i(\PP)_{v\cap \delta} = \bigcup_{\alpha<\delta} i(\PP)_{v\cap \alpha}$. Let $\delta<j(\kappa)$ and suppose that $k\rest{j(\PP)_\delta}$ is onto $i(\PP)_{k[\delta]} = i(\PP)_{v\cap k(\delta)}$. 
	
	As observed above, $k\rest {j(\uu)_\delta}$ is a bijection between $j(\uu)_\delta$ and $i(\uu)_{k(\delta)}$. It follows that $k\rest {j(\PP)_{j(\uu)_\delta}}$ is an isomorphism from $j(\PP)_{j(\uu)_\delta}$ to $i(\PP)_{i(\uu)_{k(\delta)}}$
	
	Recall that for some $R_\delta\subseteq H_{\w_1}$, $j(\PP)_{\delta+1} = j(\PP)_\delta \starr \QQ_\delta$, where $\QQ_\delta\subseteq j(\PP)_{j(\uu)_\delta}\times R_\delta$ is a $j(\PP)_{j(\uu)_\delta}$-name for an initial segment of $R_\delta$, and $j(\PP)_{\delta+1} = j(\PP)_\delta\starr \QQ_\delta$. Then $k(\QQ_\delta) = k[\QQ_\delta]$ is an $i(\PP)_{i(\uu)_{k(\delta)}}$-name for an initial segment of $k(R_\delta) = R_\delta$, and $i(\PP)_{v\cap k(\delta+1)} = i(\PP)_{v\cap k(\delta)}\starr k(\QQ_\delta)$. For $p\in j(\PP)_\delta$ and $s\in R$, $p\force_{j(\PP)_\delta} s\in \QQ_\delta$ if and only if $k(p)\force_{i(\PP)_{i(\uu)_{k(\delta)}}} s\in k(\QQ_\delta)$ and this shows that $k\rest{j(\PP)_{\delta+1}}$ is an isomorphism between $j(\PP)_{\delta+1}$ and $i(\PP)_{k[\delta+1]}$. 
\end{proof}

\section{Continuity on a non-meagre set} \label{sec:continuity}

\begin{definition}\label{def:funnyDiamond}
	Let~$\kappa$ be a cardinal and let $S\subseteq \kappa$ be stationary. We say that~$\funnyDiamond(S)$ holds if there is a sequence $\seq{f_\delta\,:\, \delta\in S}$ of functions such that for each $\delta\in S$, $\dom f_\delta\subseteq \delta$ is unbounded in $\delta$, $\range f_\delta\subset V_\delta$, and for every set $T\subseteq \kappa$ unbounded in~$\kappa$ and every function $F\colon T\to V_\kappa$, for stationarily many $\delta\in S$ we have \linebreak $F\rest {\dom f_\delta} =f_\delta$. 
\end{definition}

	The aim of this section is to prove~\eqref{item:continuity} of Proposition~\ref{prop:main}: if $\kappa$ is inaccessible (in fact $\kappa\ge \aleph_3,2^{\aleph_1}$ is sufficient), $S\subseteq S^\kappa_{\aleph_1}$ and $\funnyDiamond(S)$ holds, then there is $\bar \PP\in \PPP_\kappa(S)$ such that in~$V^{\PP_\kappa}$, every function from $2^\w$ to $2^\w$ is continuous on a non-meagre set. 

\

	We fix such $\kappa$ and $S$. Below, for brevity, for $\delta<\kappa$ we write $\PPP_\delta(S)$ for $\PPP_\delta(S\cap \delta)$. The construction of $\bar \PP$ (together with a witness template $\uu$) will be by induction, so we explain how to obtain pairs in $\PPP_\delta(S)$ from $\PPP_{\alpha}(S)$ for $\alpha<\delta$. The following are immediate from the definition of $\PPP_\delta(S)$:
	\begin{itemize}
		\item Let $\delta\le \kappa$ be a limit ordinal. Then $\PPP_\delta(S)$ is the set of all finite-support iterations $\bar \PP$ of length $\delta$ such that for some $S\cap \delta$-memory template $\uu$ of length $\delta$, for all $\alpha<\delta$, $\uu\rest \alpha$ witnesses that $\bar \PP\rest \alpha\in \PPP_\alpha(S)$. 
		\item Let $\delta<\kappa$ and let $\bar \PP\in \PPP_\delta(S)$, witnesses by $\uu$. Then $\uu\conc\seq{\emptyset}$ witnesses that $\bar \PP\times \Cohen\in \PPP_{\delta+1}(S)$. 
	\end{itemize}

	\begin{lemma}\label{lem:not_so_easy_successor}
		Let $\delta\in S$, and let $\bar \PP\in \PPP_\delta(S)$, witnessed by $\uu$. Suppose that:
		\begin{enumerate}
			\item $w\subseteq \delta$ is $\uu$-straight, and $|w|\le \aleph_1$;
			\item for some set $X$, $j\colon \PP_w\to \Cohen(\w_1,X)$ is a dense embedding;
			\item $c$ is an unbounded subset of $\delta$ which is disjoint from $S$;
			\item For all $\gamma\in c$, $j\rest{\PP_{w\cap \gamma}}$ is a dense embedding of $\PP_{w\cap \gamma}$ into $\Cohen(\epsilon_\gamma,X_\gamma)$ for some even $\epsilon_\gamma<\w_1$ and some set $X_\gamma\subset X$.
		\end{enumerate}
		For $i<\w_1$, let $\eta_i$ be the $\PP_u$-name for a real which is the $j$-pullback of the Cohen name given by $\Cohen(\{2i\})$, and similarly let $\zeta_i$ be the pullback of the name given by $\Cohen(\{2i+1\})$. Then $\uu\conc\{w\}$ witnesses that $\bar \PP \starr \Shelah(\bar\eta,\bar \zeta)\in \PPP_{\delta+1}(S)$. 
	\end{lemma}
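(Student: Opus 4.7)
The plan is to verify the three bundles of conditions making up $\uu\conc\{w\}$-quasi-Cohenness: the $S$-memory-template conditions (Definition~\ref{def:S-memory_template}), the $\uu\conc\{w\}$-iteration structure (Definition~\ref{def:u-iteration}), and the Cohen-quotient condition at the new coordinate $\delta$ (Definition~\ref{def:S-iteration}(3)). Since $\uu$ already witnesses $\bar\PP \in \PPP_\delta(S)$, only the new coordinate $\delta$, with memory $w$, needs attention. Here $w\subseteq\delta$ and $|w|\le\aleph_1$ are hypotheses; $w$ is $\uu$-closed because it is $\uu$-straight; and the limit-point condition (``every $\beta\in S\cap\delta$ that is a limit point of $w$ lies in $w$'') is precisely the $\uu$-straightness of $w$. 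The new forcing $\Shelah(\bar\eta,\bar\zeta)$ is a $\PP_w$-name because $\bar\eta,\bar\zeta$ are, and its ambient ordering $R_\delta$ sits in $H_{\w_1}$ and has size $\aleph_1$.

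The substantive content is showing that for every $\beta\in\delta\setminus S$ the quotient $\PP_{w\cup\{\delta\}}/\PP_{w\cap\beta}$ is equivalent to a Cohen algebra. The strategy is to push the analysis through $j$. Since $\bar\eta,\bar\zeta$ are the $j$-pullbacks of the canonical Cohen generics on the even/odd $\w_1$-coordinates, the map $(p,s)\mapsto(j(p),s)$ extends $j$ to a dense embedding of $\PP_{w\cup\{\delta\}} = \PP_w\starr\Shelah(\bar\eta,\bar\zeta)$ into $\Cohen(\w_1,X)\starr\Shelah$ (built from the canonical Cohen names); and since Shelah conditions and forcing statements of the form ``$\sigma\prec\eta_i$'' only see the $\w_1$-part of a Cohen condition, the latter forcing factors as $\Cohen(X)\times(\Cohen(\w_1)\starr\Shelah)$. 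For $\beta\in c$, $\PP_{w\cap\beta}$ is dense in $\Cohen(\epsilon_\beta,X_\beta)$ with $\epsilon_\beta$ even, so by Fact~\ref{fact:dense_then_isomorphic_quotient} the target quotient is forcing-equivalent to $\Cohen(X\setminus X_\beta)\times\bigl((\Cohen(\w_1)\starr\Shelah)/\Cohen(\epsilon_\beta)\bigr)$, whose second factor is Cohen by Proposition~\ref{prop:Shelah_properties}(2).

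For $\beta\in(\delta\setminus S)\setminus c$, I would pick $\gamma\in c$ with $\gamma>\beta$ and split the quotient as $(\PP_{w\cap\gamma}/\PP_{w\cap\beta})\starr(\PP_{w\cup\{\delta\}}/\PP_{w\cap\gamma})$; the second factor is Cohen by the case just handled. The first factor I would handle by applying Proposition~\ref{prop:main_Cohen_propagation} inside the base iteration $\bar\PP\in\PPP_\delta(S)$ at the pair $(w\cap\beta,w\cap\gamma)$: both sets are $\uu$-closed, and $w\cap\beta\preceq_\uu w\cap\gamma$ follows directly from $\uu$-straightness of $w$ together with $\beta\notin S$. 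The main obstacle I expect is the non-reflection clause of Proposition~\ref{prop:main_Cohen_propagation}: that $S$ does not reflect at any uncountable-cofinality limit point $\alpha\in(\beta,\gamma]$ of $w$. By $\uu$-straightness, such an $\alpha$ in $S$ must itself lie in $w$, which is useful structural information but does not in itself rule out reflection; I expect this to be overcome either by exploiting the freedom to thin the unbounded set $c$ so that successive elements do not straddle a reflection ordinal of $S$, or by invoking ambient non-reflection properties of $S$ in the recursive context in which this lemma is applied.
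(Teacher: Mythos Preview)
Your overall route matches the paper's: reduce to checking condition~(3) of Definition~\ref{def:S-iteration} at the new coordinate~$\delta$, and for each $\beta\in\delta\setminus S$ choose $\gamma\in c$ with $\gamma>\beta$ and split $\PP_{w\cup\{\delta\}}/\PP_{w\cap\beta}$ into the two pieces $\PP_{w\cap\gamma}/\PP_{w\cap\beta}$ and $\PP_{w\cup\{\delta\}}/\PP_{w\cap\gamma}$, handling the second via the dense embedding~$j$ and Proposition~\ref{prop:Shelah_properties}(2), and the first via Proposition~\ref{prop:main_Cohen_propagation}. (The paper does not bother treating $\beta\in c$ as a separate base case, but your version is harmless.)

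The gap you flag --- the non-reflection hypothesis of Proposition~\ref{prop:main_Cohen_propagation} for the pair $(w\cap\beta,\,w\cap\gamma)$ --- is real, but its resolution is much simpler than either of your proposed fixes, and it is already forced by the hypotheses of the lemma. Since $|w|\le\aleph_1$, every limit point of~$w$ (and in particular every limit point of $(w\cap\gamma)\setminus(w\cap\beta)$) has cofinality at most~$\aleph_1$. But $S\subseteq S^\kappa_{\aleph_1}$ cannot reflect at any such ordinal: reflection at an ordinal of countable cofinality is impossible by convention, and if $\cf(\alpha)=\aleph_1$ then the $\w$-limit points of any club of~$\alpha$ of order type~$\w_1$ form a club of~$\alpha$ consisting of ordinals of countable cofinality, hence disjoint from~$S$. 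Thus $w$ is bounded below every ordinal at which~$S$ reflects, the non-reflection clause holds vacuously, and Proposition~\ref{prop:main_Cohen_propagation} applies directly. No thinning of~$c$ and no extra ambient assumption on~$S$ is needed.
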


	\begin{proof}
		We need to show that for all $\beta\in \delta\setminus S$, $\PP_{w}\starr \Shelah(\bar\eta,\bar \zeta) / \PP_{w\cap \beta}$ is equivalent to a Cohen algebra. Fix such $\beta$. Find some $\gamma\in c$ greater than $\beta$. Since $\beta\notin S$, $w\cap \beta \preceq_{\uu} w\cap \gamma$. Since $|w|\le \aleph_1$ and $S$ does not reflect at any ordinal of cofinality~$\aleph_1$ (as $S\subseteq S^\kappa_{\aleph_1}$), $w$ is bounded below any ordinal at which $S$ reflects. Hence, by Proposition~\ref{prop:main_Cohen_propagation}, $\PP_{w\cap \gamma} / \PP_{w\cap \beta}$ is equivalent to a Cohen algebra. 
		
		$\PP_{w}\starr \Shelah(\bar\eta,\bar \zeta) / \PP_{w\cap \gamma}$ is isomorphic to $\Cohen (X\setminus X_\gamma)\times \left( \Cohen(\w_1)\starr \Shelah \right)/\Cohen(\epsilon_\gamma)$, and so by Proposition~\ref{prop:Shelah_properties} is equivalent to a Cohen algebra. 
\end{proof}

	\begin{lemma}\label{lem:one-step_meagreness}
		Let $\bar \PP\in \PPP_\kappa(S)$, witnessed by $\uu$. Suppose that $p\in \PP_\kappa$ forces that $A\subset 2^\w$ has size~$\aleph_1$ and is meagre. Then there is some $\uu$-closed $w\subset \kappa$ of size $\aleph_1$ such that $p\in \PP_w$ and $A$ is a $\PP_w$-name and such that $p\force_{\PP_w} ``\text{$A$ is meagre}"$.
	\end{lemma}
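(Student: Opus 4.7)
The plan is to adapt the elementary submodel argument of Lemma~\ref{lem:get_it_down_to_aleph_1} to absorb not only a single real but the entire (size-$\aleph_1$) set $A$ together with a witness to its meagreness. Fix a large enough regular $\theta$ and an elementary submodel $M \elementary (H_\theta,\in)$ of size $\aleph_1$ with $\w_1 \subset M$ and $\{\bar\PP,\uu,\kappa,S,A,p\} \subset M$. Set $w_0 = M\cap \kappa$. As in Lemma~\ref{lem:get_it_down_to_aleph_1}, for any $\alpha\in w_0$ we have $\uu_\alpha\in M$ and $|\uu_\alpha|\le\aleph_1\subset M$, so $\uu_\alpha\subset w_0$; hence $w_0$ is $\uu$-closed, and that lemma's identification gives $\PP_{w_0} = \PP_\kappa\cap M$. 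Using the remark after Definition~\ref{def:straight}, enlarge $w_0$ to a $\uu$-straight $\uu$-closed set $w$ of size $\aleph_1$ (add reflection points and close under $\uu$, iterated $\w$-many times); crucially this enlargement can be carried out while keeping $w\subset M$, since $M$ is closed under the operations involved.

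Next I would extract $\PP_w$-names for the objects of interest. Since $p\force_{\PP_\kappa} |A|=\aleph_1$ and $A,p\in M$, by elementarity there is, in $M$, an enumeration $\seq{\eta_i}_{i<\w_1}$ of $\PP_\kappa$-names for reals with $p\force A=\{\eta_i : i<\w_1\}$. For each $i<\w_1\subset M$ we have $\eta_i\in M$; by the argument at the end of the proof of Lemma~\ref{lem:get_it_down_to_aleph_1} (applied to $\eta_i$ in place of $\eta$), the antichains deciding each $\eta_i(n)$ lie in $M\cap \PP_\kappa = \PP_{w}$, so $\eta_i$ is a $\PP_w$-name. Thus $A$ itself can be presented as the $\PP_w$-name $\{(\eta_i,1) : i<\w_1\}$. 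By the same argument applied to a $\PP_\kappa$-name $\tau\in M$ coding a sequence $\seq{T_n}_{n<\w}$ of nowhere-dense subtrees of $2^{<\w}$ whose sets of branches $F_n = [T_n]$ are forced by $p$ to cover $A$ (such $\tau$ exists in $M$ by elementarity, since $p$ forces meagreness of $A$), we obtain that $\tau$, and hence each $F_n$, is a $\PP_w$-name.

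Finally, the statement ``$\seq{F_n}$ is a sequence of closed nowhere dense subsets of $2^\w$ whose union covers $A$'' is a statement in the forcing language of $\PP_w$, involving only $\PP_w$-names. Since $\PP_w\compembed \PP_\kappa$ (Lemma~\ref{lem:the_restriction_function}), the forcing relation for such statements agrees between $\PP_w$ and $\PP_\kappa$ below any $p\in \PP_w$: if $p\force_{\PP_\kappa}\vphi$ for $\vphi$ in the language of $\PP_w$, then $p\force_{\PP_w}\vphi$, else some $q\le p$ in $\PP_w$ would force $\lnot\vphi$ in $\PP_w$ and, by completeness of the embedding, also in $\PP_\kappa$, contradicting compatibility in $\PP_\kappa$. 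Applied to $\vphi\equiv$ ``$A\subseteq \bigcup_n F_n$ with each $F_n$ closed nowhere dense'', this gives $p\force_{\PP_w}$ ``$A$ is meagre''.

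The main obstacle is bookkeeping rather than conceptual: ensuring simultaneously that (i) the membership $w\subset M$ survives the closure operation that makes $w$ $\uu$-straight, and (ii) the specific witness to meagreness chosen inside $M$ is genuinely a $\PP_w$-name, rather than merely a $\PP_\kappa$-name that happens to lie in $M$. Both rely on the fact, used repeatedly, that $|R_\alpha|\le\aleph_1$ and $|\uu_\alpha|\le\aleph_1$ together with $\w_1\subset M$ force $R_\alpha,\uu_\alpha\subset M$ whenever $\alpha\in M$.
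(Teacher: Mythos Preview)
Your approach is essentially the paper's: pass to an elementary submodel $M$ of size $\aleph_1$ containing $\w_1$, take $w=M\cap\kappa$, identify $\PP_w=\PP_\kappa\cap M$, and argue that the names for the enumeration of $A$ and for the nowhere-dense trees covering it are decided by dense sets that remain dense in $\PP_w$. That part is fine.

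The problem is the detour through $\uu$-straightness. The lemma only asks for $w$ to be $\uu$-closed, and $w_0=M\cap\kappa$ already is. Your claim that the enlargement from $w_0$ to a $\uu$-straight $w$ can be carried out \emph{inside} $M$ is false in general: the enlargement adds each $\delta\in S$ that is a limit point of $w_0$, but such $\delta$ need not lie in $M$. Concretely, $\delta^*=\sup(M\cap\kappa)$ has cofinality $\ge\aleph_1$ and is never in $M$ (if it were, elementarity would produce an ordinal of $M$ strictly between $\delta^*$ and $\kappa$); when $\delta^*\in S$ --- which can certainly happen, $S$ being stationary in $S^\kappa_{\aleph_1}$ --- you must add $\delta^*\notin M$. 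Once $w\not\subseteq M$, the key identification $\PP_w=\PP_\kappa\cap M$ (which you invoke later as ``$M\cap\PP_\kappa=\PP_w$'') fails, and the density argument does not transfer.

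The fix is simply to drop the enlargement and work with $w=w_0$ throughout; then your argument goes through and matches the paper's.
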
	

	\begin{proof}
		This is similar to the proof of Lemma~\ref{lem:get_it_down_to_aleph_1}. There is a countable sequence $\seq{T_n}$ of~$\PP_\kappa$-names for trees in Cantor space (subtrees of $2^{<\w}$) such that~$p$ forces that no~$T_n$ contains a clopen set, and $A\subseteq \bigcup_n [T_n]$. We pass to an elementary submodel~$M$ of the universe of size~$\aleph_1$ containing all pertinent objects. Let $\seq{\eta_\alpha\,:\, \alpha<\w_1}$ be a sequence of~$\PP_\kappa$-names for reals such that $p$ forces that $A = \{\eta_\alpha\,:\, \alpha<\w_1\}$. For each~$\alpha$ and each~$n$, the set of conditions $q\in \PP_\kappa$ for which there is some finite binary string~$\s$ of length~$n$ such that~$q$ forces in~$\PP_\kappa$ that~$\s\in T$ and $\s\prec \eta_\alpha$ is dense below~$p$, and so dense below~$p$ in $\PP_w = \PP_\kappa\cap M$ where $w = \kappa\cap M$. Since $\PP_w\compembed \PP_\kappa$ and these statements are absolute, forcing them in~$\PP_\kappa$ and in~$\PP_w$ are equivalent. 		 
	\end{proof}

	\begin{lemma}\label{lem:meagreness_preserved}
		Let $\bar \PP\in \PPP_\kappa(S)$, witnessed by the $S$-memory template $\uu$. Let $v\subset \kappa$ be $\uu$-straight and suppose that in $V^{\PP_v}$, $A\subseteq 2^\w$ is non-meagre and has size $\aleph_1$. Then $A$ is also non-meagre in $V^{\PP_\kappa}$. 
	\end{lemma}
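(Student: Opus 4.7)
The plan is to argue by contradiction. Assume $A$ is meagre in $V^{\PP_\kappa}$, witnessed by a $\PP_\kappa$-name $\seq{\dot T_n:n<\w}$ for nowhere-dense subtrees of $2^{<\w}$ whose branches cover $A$. Fix a $\PP_v$-name $\seq{\eta_\alpha:\alpha<\w_1}$ enumerating $A$, and pick $M\prec H_\theta$ with $|M|=\aleph_1$, $\w_1\subseteq M$, containing $\bar\PP,\uu,v,S,\kappa$, the $\dot T_n$, and the $\eta_\alpha$. Set $w=M\cap\kappa$ and $u=M\cap v=w\cap v$. As in the arguments of Lemmas~\ref{lem:get_it_down_to_aleph_1} and~\ref{lem:one-step_meagreness}, both $w$ and $u$ are $\uu$-closed and of size $\aleph_1$; the $\dot T_n$ are $\PP_w$-names and (by absoluteness) witness $A$ meagre in $V^{\PP_w}$; each $\eta_\alpha$ is a $\PP_u$-name so $A$ is itself a $\PP_u$-name; and because $\PP_u\compembed\PP_v$ and any meagre cover in the inner extension remains one in the wider extension, non-meagreness of $A$ passes from $V^{\PP_v}$ down to $V^{\PP_u}$.

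Next I close $w$ at most $\w$ times under the operation of adjoining each $\delta\in S$ which is a limit point of $w$ together with $\uu_\delta$ (as in the remark after Definition~\ref{def:straight}), making $w$ $\uu$-straight while keeping $|w|=\aleph_1$. Since $v$ is $\uu$-straight, $u$ becomes $\uu$-straight as well, so $u\preceq_\uu w$. The goal is to apply Proposition~\ref{prop:main_Cohen_propagation} to the pair $(u,w)$ and conclude that $\PP_w/\PP_u$ is equivalent to a Cohen algebra; its remaining hypothesis is that $S$ reflects at no limit point of $w\setminus u$. Any such limit point has cofinality $\w$ or $\w_1$; the countable-cofinality case is automatic since $S\subseteq S^\kappa_{\aleph_1}$. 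For the $\w_1$-cofinality case, any such $\delta\in S$ already lies in $w$ by $\uu$-straightness of $w$; the delicate bookkeeping is to arrange, by an appropriate enlargement of $M$ (using the $\uu$-straightness of $v$ to absorb the relevant reflection points into $v$), that every such $\delta$ lies also in $v$, hence in $u$, and so is not a limit point of $w\setminus u$. This is the main obstacle in the proof.

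Granted that $\PP_w/\PP_u$ is equivalent to a Cohen algebra, the contradiction follows by a standard preservation argument carried out over $V^{\PP_u}$. By the countable chain condition, the countably many $\dot T_n$ together depend on only countably many coordinates of $\PP_w/\PP_u$; replacing it with a countable sub-algebra we may assume $\PP_w/\PP_u=\Cohen$ is a single copy of Cohen forcing. For each $x\in A$ pick $n(x)<\w$ and $p(x)\in\Cohen$ with $p(x)\Vdash x\in[\dot T_{n(x)}]$. Setting $T_n^p:=\{\s\in 2^{<\w}:p\Vdash\s\in\dot T_n\}$ in $V^{\PP_u}$, the set $[T_n^p]$ is closed nowhere dense in $V^{\PP_u}$: any $[\s]\subseteq[T_n^p]$ would force $[\s]\subseteq[\dot T_n]$, contradicting that $p$ forces $\dot T_n$ to be nowhere dense. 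Grouping the $x$'s by $(n,p)$ we get $A\subseteq\bigcup_{n,p}[T_n^p]$ in $V^{\PP_w}$, and since all sets involved lie in $V^{\PP_u}$ this inclusion is absolute. Hence $A$ is a countable union of $V^{\PP_u}$-closed-nowhere-dense sets, contradicting its non-meagreness in $V^{\PP_u}$.
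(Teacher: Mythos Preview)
Your detour through a small $u\subseteq v$ can be made to work, but the ``main obstacle'' you flag is not an obstacle at all, and the paper's route avoids the whole manoeuvre.

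On the non-reflection hypothesis of Proposition~\ref{prop:main_Cohen_propagation}: since $|w|=\aleph_1$, every limit point of $w\setminus u$ has cofinality at most $\aleph_1$. You handle $\cf(\delta)=\omega$. For $\cf(\delta)=\aleph_1$, take a club $C\subseteq\delta$ of order type $\omega_1$; each limit point of $C$ then has countable cofinality, so the club of such limit points is disjoint from $S\subseteq S^\kappa_{\aleph_1}$. Hence $S$ reflects at no ordinal of cofinality $\aleph_1$ whatsoever (the paper records exactly this in the proof of Lemma~\ref{lem:not_so_easy_successor}), and your ``delicate bookkeeping'' to force such $\delta$ into $u$ is unnecessary. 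You appear to be conflating ``$S$ reflects at $\delta$'' with ``$\delta\in S$''; these are unrelated conditions. Separately, the sentence ``since $v$ is $\uu$-straight, $u$ becomes $\uu$-straight as well'' is not justified as written: closing $w$ does nothing to $u$. What does work is to take $u=w\cap v$ after the closure (or equivalently close $u$ in parallel); then any $\beta\in S$ that is a limit point of $u$ lies in both $w$ and $v$ by their respective straightness, hence in $u$.

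The paper sidesteps the entire descent to $u$ by enlarging rather than shrinking. From a condition $p\in\PP_\kappa$ forcing $A$ meagre, Lemma~\ref{lem:one-step_meagreness} gives a $\uu$-closed set of size $\aleph_1$ carrying $p$ and the meagreness witness; its union with $v$ is a $\uu$-closed $w\supseteq v$ with $|w\setminus v|\le\aleph_1$ and $p\force_{\PP_w}``A$ is meagre''. Now $v\preceq_\uu w$ is immediate from the $\uu$-straightness of $v$, non-reflection is automatic as above, so Proposition~\ref{prop:main_Cohen_propagation} makes $\PP_w/\PP_v$ Cohen. Your last paragraph (which is correct, and is precisely what the paper's one-line ``this is impossible, since $\PP_w/\PP_v$ is a Cohen extension'' unpacks to) then finishes: $A$, non-meagre in $V^{\PP_v}$, stays non-meagre in $V^{\PP_w}$, contradicting $p$. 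No submodel $u$, no downward-absoluteness step, no straightening iteration.
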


	\begin{proof}
		If not, then there is some $p\in \PP_\kappa$ which forces that $A$ is meagre. By Lemma~\ref{lem:one-step_meagreness}, there is some $\uu$-closed $w\supseteq v$ such that $|w\setminus v| \le \aleph_1$, $p\in \PP_w$ and $p\force_{\PP_w}``\text{$A$ is meagre}"$. This is impossible, since $\PP_w/\PP_v$ is a Cohen extension.
	\end{proof}

For $\delta\le \kappa$, $\bar \PP\in \PPP_\delta(S)$ (witnessed by $\uu$) and $\alpha\in \delta\setminus S$, we let $\rho_\alpha$ be the name for the Cohen real added by $\QQ_\alpha = \PP_{\{\alpha\}}$. Since $\rho_\alpha$ is a $\PP_{\{\alpha\}}$-name, it is a $\PP_v$-name for any $\uu$-closed set $v$ containing $\alpha$. 

\begin{lemma}\label{lem:main_step}
	Let $\delta\in S$, and let $\bar \PP\in \PPP_\delta(S)$, witnessed by $\uu$.
	
	Suppose that $\{\alpha_i\,:\,i<\w_1\}$ is an increasing enumeration of an unbounded subset of $\delta$, disjoint from $S$, and that we have:
		\begin{itemize}
			\item for $i<\w_1$, $\uu$-straight sets $w_i$ of size $\aleph_1$ such that $\alpha_i\in w_i$, $w_i\subset \alpha_{i+1}$ and $w_i\cap \alpha_i$ is a constant $w^*$;
			\item sets $Y,Z,W$ and dense embeddings $j_i$ of $\PP_{w_i}$ into $\Cohen(Y,Z,W)$, such that $j_i\rest {\PP_{w^*}}$ is constant, a dense embedding into $\Cohen(Y)$; and $j_i\rest {\QQ_{\alpha_i}}$ is 
		a dense embedding into $\Cohen(Z)$ such that $j_i[\rho_{\alpha_i}]$ is the Cohen real added by $\Cohen(Z)$;
			\item A $\PP_{w_i}$-name $\eta_i$ for a real such that the push-forward $j_i[\eta_i]$ (which is a $\Cohen(Y,Z,W)$-name for a real) is a constant~$\nu^*$.
			\end{itemize}
		Let $w = \bigcup_i w_i$. Then $w$ is $\uu$-straight and there is a $\PP_w$-name $\QQ$ for a notion of forcing such that:
	\begin{itemize}
		\item $\bar \PP\starr \QQ\in \PPP_\delta(S)$ (witnessed by $\uu\conc\seq{w}$); and
		\item In $V^{\PP_w\starr \QQ}$ there is a non-meagre subset $A$ of $\{\rho_{\alpha_i}\,:\, i<\w_1\}$ such that the map $\rho_{\alpha_i}\mapsto \eta_i$ is continuous on $A$. 
	\end{itemize}
\end{lemma}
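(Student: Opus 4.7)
The plan is to take $\QQ$ as Shelah's forcing $\Shelah(\bar\rho,\bar\eta)$ from Section~\ref{sec:preliminaries}, with $\bar\rho=\seq{\rho_{\alpha_i}}_{i<\w_1}$ and $\bar\eta=\seq{\eta_i}_{i<\w_1}$, and to reduce both conclusions to Shelah's Proposition~\ref{prop:Shelah_properties} by producing a dense embedding $J\colon\PP_w\to\Cohen(\w_1,X)$ whose pullbacks of the coordinates $\Cohen(\{2i\})$ and $\Cohen(\{2i+1\})$ are respectively $\rho_{\alpha_i}$ and $\eta_i$. First I would check that $w=\bigcup_i w_i$ is $\uu$-straight: if $\beta\in S$ has $w\cap\beta$ unbounded in $\beta$, then because $\cf\beta=\aleph_1$ and the $\alpha_i$ form an $\w_1$-sequence cofinal in $\delta$, $\beta$ cannot be a limit of the $\alpha_i$ (else $\beta=\delta$), so $\beta<\alpha_{i+1}$ for some $i$; since $\bigcup_{j<i}w_j\subseteq\alpha_i$, this forces $w_i\cap\beta$ unbounded in $\beta$, and the $\uu$-straightness of $w_i$ yields $\beta\in w_i\subseteq w$. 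Note also that $w\cap\alpha_i=\bigcup_{j<i}w_j$, matching the template of Lemma~\ref{lem:not_so_easy_successor} with $c=\{\alpha_i\,:\,i<\w_1\}$.

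To assemble $J$, I would exploit that $w_i\cap w_j=w^*$ for $i\ne j$, so iterated application of Lemma~\ref{lem:tensor_product} expresses $\PP_w/\PP_{w^*}$ as the finite-support product indexed by $i<\w_1$ of the quotients $\PP_{w_i}/\PP_{w^*}$; each such quotient is equivalent to $\Cohen(Z\cup W)$ by Lemma~\ref{lem:Cohen_extensions} applied to $j_i$, and the constancy of $j_i\rest{\PP_{w^*}}$ and of $\nu^*=j_i[\eta_i]$ makes this description uniform across $i$. This densely embeds $\PP_w$ into $\Cohen(Y)\times\prod^{\mathrm{fs}}_{i<\w_1}\Cohen(Z\cup W)$, with $\rho_{\alpha_i}$ the $Z$-generic of the $i$-th factor and $\eta_i$ the reading of $\nu^*$ in that factor. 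Renaming the $Z$-slot of the $i$-th factor to $\{2i\}$ displays $\rho_{\alpha_i}$ as the $2i$-th Cohen generic; a further dense embedding of each $\Cohen(Z\cup W)$ into $\Cohen(\{2i+1\})\times\Cohen(X_i')$ (again via Lemma~\ref{lem:Cohen_extensions}) sending $\nu^*$ to the $\{2i+1\}$-generic then produces the required $J$, with $J\rest{\PP_{w\cap\alpha_i}}$ a dense embedding into $\Cohen(2i,X_{\alpha_i})$ for $X_{\alpha_i}=Y\cup\bigcup_{j<i}X_j'\subseteq X$.

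With $J$ in place the hypotheses of Lemma~\ref{lem:not_so_easy_successor} are satisfied, so $\bar\PP\starr\QQ\in\PPP_{\delta+1}(S)$ witnessed by $\uu\conc\seq{w}$. The induced forcing equivalence $\PP_w\starr\QQ\sim\Cohen(X)\times(\Cohen(\w_1)\starr\Shelah)$ then lets Proposition~\ref{prop:Shelah_properties}(1) produce a non-meagre subset of the $2i$-th Cohen reals on which $2i\mapsto 2i+1$ is continuous, and transporting back by $J^{-1}$ gives the non-meagre subset of $\{\rho_{\alpha_i}\}$ on which $\rho_{\alpha_i}\mapsto\eta_i$ is continuous. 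The main obstacle is the last step in the assembly of $J$: relabelling so that $\nu^*$ is a genuine Cohen coordinate requires $\nu^*$ to be Cohen-generic within $\Cohen(Z\cup W)$, which must be ensured by the choice of the $\eta_i$'s (hence $\nu^*$) from Cohen-fresh names in the parent construction of Section~\ref{sec:continuity} via the $\funnyDiamond(S)$-sequence.
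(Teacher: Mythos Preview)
Your plan breaks at exactly the point you flag as ``the main obstacle'', and this cannot be repaired by pushing the requirement back to the parent construction. In the application (the proof of~\eqref{item:continuity} of Proposition~\ref{prop:main}) one takes $\eta_i = F(\rho_{\alpha_i})$ for an \emph{arbitrary} function~$F$ in the extension; so $\nu^* = j_i[\eta_i]$ will in general depend on the $Z$-coordinate (that is, on $\rho_{\alpha_i}$). For instance if $F=\id$ then $\nu^*$ is literally the $Z$-generic, hence lies in $V^{\Cohen(Y,Z)}$ and cannot be made into an independent $\{2i+1\}$-coordinate over $V^{\Cohen(Y,\{2i\})}$. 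Thus there is no dense embedding sending $\eta_i$ to the $(2i+1)$-st Cohen real, and your choice $\QQ=\Shelah(\bar\rho,\bar\eta)$ does not reduce to the situation of Proposition~\ref{prop:Shelah_properties}.

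The paper avoids this entirely by \emph{not} asking $\eta_i$ to be a Cohen coordinate. Since $\Cohen(Y,Z,W)$ is c.c.c., $\nu^*$ is already a $\Cohen(Y,Z,X)$-name for some countable $X\subseteq W$; one relabels $Z\to\{2i\}$ and $X\to\{2i+1\}$ and lets $\mu_i$ be the pullback of the $(2i+1)$-st Cohen real. Now over $V^{\Cohen(Y)}=V^{\PP_{w^*}}$, the real $\eta_i$ is a continuous function of the pair $(\rho_{\alpha_i},\mu_i)$ (any real named in a Cohen extension by countably many Cohen reals is a ground-model-continuous function of the generic sequence). One then takes $\QQ=\Shelah(\seq{\rho_{\alpha_i}},\seq{\mu_i})$; Lemma~\ref{lem:not_so_easy_successor} applies verbatim, Proposition~\ref{prop:Shelah_properties} gives a non-meagre $A$ on which $\rho_{\alpha_i}\mapsto\mu_i$ is continuous, and on $A$ the map $\rho_{\alpha_i}\mapsto\eta_i$ is then a composition of two continuous maps. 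Your assembly of the dense embedding $\psi\colon\PP_w\to\Cohen(Y,\w_1,A)$ via the product-over-$\PP_{w^*}$ structure is correct and is exactly what the paper does; only the target of the Shelah forcing needs to change from $\bar\eta$ to the auxiliary $\bar\mu$.
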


\begin{proof}
	Since $\Cohen(Y,Z,W)$ is c.c.c., there is a countable set $X\subseteq W$ such that $\nu^*$ is a $\Cohen(Y,Z,X)$-name. We replace $W$ by $W\setminus X$. Also note that $Z$ is countable. 
	
	We merge the embeddings $j_i$ as follows. We assume that $Y$ and $W$ are disjoint from $\w_1$. For $i<\w_1$ fix isomorphisms $f_i$ of $\Cohen(Z)$ with $\Cohen(\{2i\})$ and $g_i$ of $\Cohen(X)$ with $\Cohen(\{2i+1\})$. Also fix sets $A_i$ (pairwise disjoint, and disjoint from $Y$ and $\w_1$) and isomorphisms $h_i$ of $\Cohen(W)$ with $\Cohen(A_i)$. Let $A_{<i} = \bigcup_{i'<i} A_{i'}$ and $A = A_{<\w_1}$.
	
	 The product map $\vphi_i = (\id_{\Cohen(Y)}, f_i,g_i,h_i)$ is an isomorphism of $\Cohen(Y,Z,X,W)$ with $\Cohen(Y,\{2i,2i+1\},A_i)$. The composition $\psi_i = \vphi_i\circ j_i$ is a dense embedding of $\PP_{w_i}$ into $\Cohen(Y,\{2i,2i+1\},A_i)$. The map $\psi_i\rest {\PP_{w^*}}$ equals $j_i\rest{\PP_{w^*}}$ (a constant, dense embedding into $\Cohen(Y)$). Letting $\nu_i$ be the name for the Cohen real added by $\Cohen(\{i\})$, we have $\psi_i[\rho_{\alpha_i}] = \nu_{2i}$. 
	
	$w$ is $\uu$-straight since every limit point of $w$ is a limit point of some $w_i$. Let $p\in \PP_w$. For all but finitely many $i<\w_1$ we have $p\rest{w_i}\in \PP_{w^*}$. In other words,~$\PP_{w}$ is the finite support product of the~$\PP_{w_i}$ over the root~$\PP_{w^*}$. Thus, the sequence \linebreak $\psi(p) = \seq{\psi_i(p)\,:\, i<\w_1}$ is an element of the finite support product of \linebreak $\Cohen(Y,\{2i,2i+1\},A_i)$ over $\Cohen(Y)$ which equals $\Cohen(Y,\w_1,A)$, and $\psi$ is a dense embedding of $\PP_w$ into $\Cohen(Y,\w_1,A)$. Further, for $i<\w_1$, $\psi\rest{\PP_{w\cap \alpha_i}}$ is a dense embedding of $\PP_{w\cap \alpha_i} = \PP_{\bigcup_{i'<i} w_{i'}}$ into $\Cohen(Y,2i,A_{<i})$. 

	The isomorphism $\vphi_i$ carries the name $\nu^*$ to a $\Cohen(Y,\{2i,2i+1\})$-name for a real $\vphi_i[\nu^*]$. In $V^{\Cohen(Y)}$ there is a continuous function which in $V^{\Cohen(Y,\w_1)}$ maps the pair $(\nu_{2i},\nu_{2i+1})$ to $\vphi_i[\nu^*] = \psi_i(\eta_i)$. Translating back, letting $\mu_i = \psi^{-1}[\nu_{2i+1}]$, in $V^{\PP_{w^*}}$ there is a continuous function which in $V^{\PP_w}$ takes the pair $(\rho_{\alpha_i},\mu_i)$ to $\eta_i$. 
	
	Let $\QQ = \Shelah(\seq{\rho_{\alpha_i}},\seq{\mu_i})$. Then in $V^{\PP_w\starr \QQ}$ there is a non-meagre subset $A$ of \linebreak $\{\rho_{\alpha_i}\,:\, i<\w_1\}$ on which the map $\rho_{\alpha_i}\mapsto \mu_{i}$ is continuous. On $A$, the map $\rho_{\alpha_i}\mapsto \eta_i$ is the composition of two continuous functions and so is continuous. Finally, Lemma~\ref{lem:not_so_easy_successor} shows that $\bar \PP\rest \delta \starr \QQ\in \PPP_{\delta+1}(S)$.
\end{proof}

We now prove~\eqref{item:continuity} of Proposition~\ref{prop:main}. Let $\seq{f_\delta\,:\, \delta\in S}$ witness $\funnyDiamond(S)$. 

By recursion we define $\bar \PP\in \PPP_\kappa(S)$ with a witness template $\uu$. At step $\delta<\kappa$, say we have already defined $\uu\rest \delta$ and $\bar \PP\rest \delta$ (taking limits at limit stages). In the interesting case, suppose that $\delta\in S$. If $\dom f_\delta$ is a set increasingly enumerated as $\{\alpha_i\,:\, i<\w_1\}$ and $f_\delta(\alpha_i) = (w_i,Y,Z,W,j_i,\eta_i)$ where the conditions of Lemma~\ref{lem:main_step} hold, then we choose $\uu_{\delta} = w$ and $\QQ_\delta = \QQ$ for $(w,\QQ)$ given by the lemma. Otherwise we let $\uu_\delta = \emptyset$ and $\QQ_\delta = \Cohen$. 

This defines $\uu$ and $\bar \PP\in \PPP_\kappa(S)$. Now let $F$ be a $\PP_\kappa$-name for a function from $2^\w$ to~$2^\w$. Again recall that for $\alpha\in \kappa\setminus S$, $\rho_\alpha$ is the $\QQ_\alpha = \PP_{\{\alpha\}}$-name for the Cohen real added. 

By Lemma~\ref{lem:get_it_down_to_aleph_1} (and the discussion after Definition~\ref{def:straight}), for $\alpha\in \kappa\setminus S$ let $v_\alpha$ be a $\uu$-straight set of size $\aleph_1$ such that $F(\rho_\alpha)$ is a $\PP_{v_\alpha}$-name. By increasing, we may assume that $\alpha\in v_\alpha$. 

Since $|v_\alpha|= \aleph_1$, it is bounded below any ordinal at which $S$ reflects. Then $\PP_{v_\alpha\cap \alpha}$ is equivalent to a Cohen algebra (of dimension at most $\aleph_1$). Of course $\PP_{v_\alpha\cap (\alpha+1)}/ \PP_{v_\alpha\cap \alpha} = \PP_{\{\alpha\}}$ is equivalent to a Cohen algebra (of dimension 1), and since $\alpha+1\notin S$, $\PP_{v_\alpha}/\PP_{v_\alpha\cap (\alpha+1)}$ is equivalent to a Cohen algebra. We can therefore find sets $Y_\alpha,Z_\alpha,W_\alpha\in V_{\w_1}$ (all of size at most $\aleph_1$, and $Z_\alpha$ countable (or a singleton)) and a dense embedding $k_\alpha$ of $\PP_{v_\alpha}$ into $\Cohen(Y_\alpha,Z_\alpha,W_\alpha)$ such that $k_\alpha\rest{\PP_{v_\alpha\cap \alpha}}$ is a dense embedding into $\Cohen(Y_\alpha)$ and $k_\alpha\rest{\PP_{\{\alpha\}}}$ is the canonical dense embedding into $\Cohen(Z_\alpha)$ (so $k_\alpha[\rho_\alpha]$ is the Cohen real added by $\Cohen(Z_\alpha)$). We let $\nu_\alpha = k_\alpha[F(\rho_\alpha)]$. 

Since $\kappa\setminus S$ is stationary, there is some unbounded (indeed stationary) set $T\subset \kappa$ such that for $\alpha\in T$, $Y_\alpha,Z_\alpha,W_\alpha$ and $\nu_\alpha$ are constant $Y,Z,W$ and $\nu^*$; and further, since $\kappa$ is inaccessible, $v_\alpha\cap \alpha$ is a constant $w^*$ and $j_\alpha\rest{\PP_{w^*}}$ is a constant $j^*$.
 By induction we choose an unbounded $T'\subset T$ such that for $\alpha<\beta$ from $T'$, $v_\alpha\subset \beta$. For $\alpha\in T'$, let $h(\alpha) = (v_\alpha,Y,Z,W,j_\alpha,F(\rho_\alpha))$. 

There is some $\delta\in S$ such that $f_\delta = h\rest{\dom f_\delta}$. By restricting to a subset we may assume that $\dom f_\delta  = \{\alpha_i\,:\, i<\w_1\}$ (increasing enumeration). 
Then letting $w_i = v_{\alpha_i}$ and $\eta_i = F(\rho_{\alpha_i})$, we see that the conditions of Lemma~\ref{lem:main_step} hold, and so, for the resulting $w$, in $V^{\PP_{w\cup\{\delta\}}}$ there is a non-meagre set $A\subseteq \{\rho_{\alpha_i}\,:\, i<\w_1\}$ on which $\rho_{\alpha_i}\mapsto \eta_i = F(\rho_{\alpha_i})$ is continuous. By Lemma~\ref{lem:meagreness_preserved}, $A$ is non-meagre in $V^{\PP_\kappa}$ as well. This concludes the proof.

\section{Consistency strength} \label{sec:consistency}
	We prove~\eqref{item:preparation} of Proposition~\ref{prop:main}. Several ideas come from~\cite{GitikShelah}. Let $\kappa$ be a measurable cardinal. We want to show that in a forcing extension $W$ there are $S\subseteq S^\kappa_{\aleph_1}$ and a normal ultrafilter embedding $j\colon W\to N$ with critical point $\kappa$ such that:
	\begin{enumerate}
		\item $\funnyDiamond(S)$ holds; and
		\item in $W$, $j(S)$ reflects no-where in the interval $(\kappa,j(\kappa)]$. 
	\end{enumerate}
	
We first add $S$ and its ``quasi-diamond'' sequence.

 \begin{notation}
	 Let $I$ be the class of inaccessible cardinals and let $\bar I$ be its closure. For $\alpha<\kappa$ let $\alpha^\iplus$ be the least element of $I$ greater than $\alpha$. 
 \end{notation}

\begin{definition} \label{def:Q}
	\textbf{1.} Let $\QQ$ consists of the pairs $p = (\s,{F})$ such that:
	\begin{itemize}
		\item $\s$ is a partial function from $\kappa$ to $2$. The domain of $\s$ is an Easton set (bounded below each inaccessible cardinal). Further, for all $\delta\in \bar I$, the domain of 
	$\s\rest{[\delta,\delta^\iplus)}$ is an initial segment of $[\delta,\delta^\iplus)$. 
		\item Every $\alpha\in \s^{-1}\{1\}$ has cofinality $\aleph_1$.
		\item For all $\alpha<\kappa$ of uncountable cofinality, if $\dom \s$ is unbounded in $\alpha$, then $\s^{-1}\{0\}$ contains a club of $\alpha$.
		\item $F = \seq{f_\alpha:\alpha\in \s^{-1}\{1\}}$ is a sequence of functions such that for all $\alpha\in \s^{-1}\{1\}$, $\dom f_\alpha$ is a club of $\alpha$ of order-type $\w_1$ and $\range f_\alpha\subset \alpha$.
	\end{itemize}
	Extension in $\QQ$ is given by extension in both coordinates. 

	\textbf{2.} As defined, $\QQ$ is a class of conditions, and we will use only a set of these. For an interval of ordinals $A$, we let $\QQ_A$ be the set of conditions $(\s,F)\in \QQ$ such that $\dom \s\subseteq A$. We will be mainly interested in the case $A = \kappa$. 
\end{definition}

\begin{definition}
	Let $p = (\s,F)\in \QQ_A$. We define a condition $\bar p = (\bar \s,F)$ by defining $\bar \s(\beta) =0$ for all $\beta\in A\setminus \dom \s$ such that $\beta\cap \dom \s$ is unbounded in $\beta$. Since we are only adding to $\dom \bar \s$ limit points of $\dom \s$, it follows that the limit points of $\dom \s$ and $\dom \bar \s$ are the same, and so $\bar p\in \QQ_A$.
\end{definition}
 
\medskip

Let $\gamma$ be an ordinal, and suppose that $\seq{p_i}_{i<\gamma}$ is a decreasing sequence from~$\QQ$. If $p_i = (\s_i,F_i)$ then we let $\s_{<\gamma} = \bigcup_{i<\gamma} \s_i$, $F_{<\gamma} = \bigcup_{i<\gamma} F_i$, and $p_{<\gamma} = (\s_{<\gamma},F_{<\gamma})$. If $p_i\in \QQ_A$ for all $i<\gamma$, and $\cf(\gamma) < (\min A)^\iplus$ it may still be the case that $p_{<\gamma}\notin \QQ$, because of the non-reflection requirement. However, if in addition the sequence witnesses non-reflection the we get closure. 

\begin{lemma}\label{lem:essential_strategic_closure}
	Let $A$ be an interval of ordinals. Let $\gamma < (\min A)^\iplus$, and let $\seq{p_i}_{i<\gamma}$ be a decreasing sequence of conditions in $\QQ_A$. Further suppose that for all $i<\gamma$,~$p_i$ extends~$\overline{p_{<i}}$. Then $p_{<\gamma}\in \QQ_A$.
\end{lemma}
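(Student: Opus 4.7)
The plan is to verify each of the four clauses in the definition of $\QQ_A$ for the candidate $p_{<\gamma} = (\s_{<\gamma}, F_{<\gamma})$. The initial-segment property on each interval $[\delta,\delta^\iplus)$, the condition $\s^{-1}\{1\}\subseteq\{\alpha:\cf\alpha=\aleph_1\}$, and the requirements on $F_{<\gamma}$ all pass through unions automatically. For the Easton condition, I would observe that if $\lambda$ is an inaccessible meeting $A$ non-trivially, then $\lambda\ge (\min A)^\iplus>\gamma$, so $\gamma<\lambda=\cf(\lambda)$, and the regularity of $\lambda$ bounds $\sup_i \sup(\dom\s_i\cap\lambda)$ below $\lambda$.

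The genuine content is the non-reflection clause: given $\alpha$ of uncountable cofinality with $\dom\s_{<\gamma}$ unbounded in $\alpha$, exhibit a club of $\alpha$ inside $\s_{<\gamma}^{-1}\{0\}$. I would split into two cases. In Case~1, some $\dom\s_i$ is already unbounded in $\alpha$; then the club witnessing non-reflection for the condition $p_i\in\QQ_A$ works directly, since $\s_{<\gamma}\supseteq\s_i$. In Case~2, each $\dom\s_i\cap\alpha$ is bounded in $\alpha$ (forcing $\gamma$ to be a limit). For $j\le\gamma$ set $\beta_j=\sup(\dom\s_{<j}\cap\alpha)$; this sequence is non-decreasing, continuous at limits, satisfies $\beta_j<\alpha$ for $j<\gamma$ and $\beta_\gamma=\alpha$. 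Let $V=\{\beta_j:j<\gamma\}$, which is unbounded in $\alpha$, and let $C$ be its set of limit points strictly below $\alpha$. Since $\cf\alpha>\w$, $C$ is a club of $\alpha$.

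The heart of the argument is showing $C\subseteq\s_{<\gamma}^{-1}\{0\}$. Fix $\beta^*\in C$ and let $j^*=\min\{j\le\gamma:\beta_j\ge\beta^*\}$. First I would rule out $j^*$ being a successor $j'+1$: in that case $V\cap\beta^*\subseteq\{\beta_j:j\le j'\}$ has supremum $\beta_{j'}<\beta^*$, contradicting $\beta^*\in C$. Next I would rule out $\beta^*\in\dom\s_{<j^*}$: if $\beta^*\in\dom\s_j$ for some $j<j^*$, then $\beta_{j+1}\ge\beta^*$, violating the minimality of $j^*$. So $j^*$ is a limit ordinal, $\beta^*=\sup_{j<j^*}\beta_j$ is a limit point of $\dom\s_{<j^*}$, and $\beta^*\notin\dom\s_{<j^*}$. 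By the definition of the bar operation, $\bar\s_{<j^*}(\beta^*)=0$; since by hypothesis $p_{j^*}\le\overline{p_{<j^*}}$, we conclude $\s_{j^*}(\beta^*)=0$, hence $\s_{<\gamma}(\beta^*)=0$.

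The main obstacle is precisely this Case~2 analysis: one has to recognize that the relevant witnesses to non-reflection at $\alpha$ are not inherited from any single $p_i$, but are produced precisely at the limit stages where the bar-extension hypothesis is used to install zeros at new limit points of the aggregating domain. Once the correct club $C$ (limit points of the ``support-sup'' sequence) is identified and $j^*$ is shown to be a limit at which $\beta^*$ is a fresh limit point, the bar-closure condition does the rest.
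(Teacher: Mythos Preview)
Your proposal is correct and follows essentially the same approach as the paper: the paper also reduces to the non-reflection clause, handles your Case~1 implicitly, and in Case~2 defines the same sequence $\beta_i=\sup(\dom\s_{<i}\cap\alpha)$, then passes to a club of indices~$i$ at which $\beta_i\notin\dom\s_{<i}$ so that the bar-extension hypothesis yields $\s_i(\beta_i)=\overline{\s_{<i}}(\beta_i)=0$. Your minimal-index argument for~$j^*$ simply makes explicit why such a club of indices exists.
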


\begin{proof}
	We only need to check the non-reflection condition. Let $\alpha$ have uncountable cofinality and suppose that $\dom \s_{<\gamma}$ is unbounded in $\alpha$. We assume that for all $i<\gamma$, $\alpha\cap \dom \s_{i}$ is bounded below $\alpha$; let $\beta_i = \sup (\dom \s_{<i}\cap \alpha)$. The sequence $\seq{\s_{<i}}$ is continuous so $\{\beta_i\,:\, i<\gamma\}$ is a club of $\alpha$. By restricting to a club of $i\in \gamma$, we may assume that $\beta_i\notin \dom \s_{<i}$, and in this case $\s_i(\beta_i) = \overline{\s_{<i}}(\beta_i) = 0$.  Thus, a club subset of the $\beta_i$ ensures that $\alpha$ does not prevent $p_{<\gamma}$ from being in $\QQ_A$. 
\end{proof}

This shows that $\QQ_A$ is $<(\min A)^\iplus$-strategically directed closed, and so $<(\min A)^\iplus$-distributive. If $\lambda<\kappa$ is inaccessible, then $|\QQ_\lambda|= \lambda$, and $\QQ_\kappa = \QQ_\lambda \times \QQ_{[\lambda,\kappa)}$. Since $\QQ_{[\lambda,\kappa)}$ is $\lambda^+$-distributive, this shows that $\QQ_\kappa$  preserves all cofinalities (and so, preserves all cardinals). Similarly, for any cardinal $\delta<\kappa$, since $|\QQ_{\delta}| < \delta^\iplus$, $(2^{\delta})^{V_{\QQ_\delta}}< \delta^\iplus$, and $\QQ_{[\delta,\kappa)}$ does not add subsets of $\delta$. We conclude that any inaccessible cardinal in $V$ is also inaccessible in $V^{\QQ_\kappa}$. 
%
%
%


\begin{proposition}\label{prop:measurability_preserved}
	Assume $2^\kappa = \kappa^+$. Then $\kappa$ is measurable in $V^{\QQ_\kappa}$. 
	%
	%
\end{proposition}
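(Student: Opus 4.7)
The plan is to lift a normal ultrapower embedding through $\QQ_\kappa$. Fix a normal measure $U$ on $\kappa$ and let $j \colon V \to M \cong \Ult(V,U)$ be the associated ultrapower embedding, so $M^\kappa\subseteq M$ in $V$, the critical point of $j$ is $\kappa$, and by $2^\kappa = \kappa^+$ one has $|j(\kappa)|^V = \kappa^+$. Conditions of $\QQ_\kappa$ sit in $V_\kappa$ (their domains are Easton-bounded below the inaccessible $\kappa$, and the $f_\alpha$'s are functions in $V_{\alpha+1}$ for $\alpha<\kappa$), so $j\rest{\QQ_\kappa} = \id_{\QQ_\kappa}$; applying elementarity to the product factorization recorded just before Proposition~\ref{prop:measurability_preserved} (with $\kappa$ in place of $\lambda$), in $M$ we get $j(\QQ_\kappa) = \QQ_\kappa \times \QQ_{[\kappa,j(\kappa))}^M$.

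Let $G \subseteq \QQ_\kappa$ be $V$-generic (hence $M$-generic). The main task is to construct $H \subseteq \QQ_{[\kappa,j(\kappa))}^M$ generic over $M[G]$ \emph{inside} $V[G]$, by enumerating the dense open subsets of $\QQ_{[\kappa,j(\kappa))}^M$ that lie in $M[G]$ in some order type $\leq \kappa^+$ and building a decreasing sequence that meets them all. The enumeration is possible because a nice $\QQ_\kappa$-name for such a dense open is coded by $j(\kappa)$-many antichains in $\QQ_\kappa$; using $M^\kappa\subseteq M$, $|\QQ_\kappa|=\kappa$, and GCH at $\kappa$, the total number of such names in $M$ has cardinality at most $|j(\kappa)^\kappa|^V \le ((\kappa^+)^\kappa)^V = \kappa^+$, a bound that is preserved under $\QQ_\kappa$. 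The decreasing sequence is built using the game from Lemma~\ref{lem:essential_strategic_closure}: the tail $\QQ_{[\kappa,j(\kappa))}^M$ is $<\kappa^\iplus$-strategically closed in $M$, and $\kappa^\iplus$ (computed in $M$) is an $M$-inaccessible cardinal strictly above the successor cardinal $\kappa^+$, so we actually get $<\kappa^{++}$-strategic closure, more than enough for a build of length $\kappa^+$. Crucially, the strategy ``pass to $\overline{p_{<i}}$ at limit stages'' and the closure lemma are absolute statements about the structure of $\QQ_A$, so the strategic closure transfers intact from $M$ to $M[G]$ and to $V[G]$.

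With $H$ in hand, $G \cup H$ is $j(\QQ_\kappa)$-generic over $M$, and because $j[G] = G \subseteq G\cup H$ the embedding lifts to $j^+ \colon V[G] \to M[G][H]$ via $j^+(\dot A_G) = j(\dot A)_{G\cup H}$. Define $U^+ = \{A \in \mathcal{P}(\kappa)^{V[G]} : \kappa \in j^+(A)\}$; this is a $V[G]$-normal ultrafilter on $\kappa$. It lies in $V[G]$, not merely in $V[G][H]$: since $\QQ_{[\kappa,j(\kappa))}^M$ is $<\kappa^{++}$-strategically closed in $V[G]$ (hence $<\kappa^{++}$-distributive there) and $|\mathcal{P}(\kappa)^{V[G]}|^{V[G]} = \kappa^+$, forcing with the tail adds no new subsets of $\mathcal{P}(\kappa)^{V[G]}$. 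So $U^+ \in V[G]$ witnesses measurability of $\kappa$ in $V^{\QQ_\kappa}$.

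The hardest step will be the middle one: verifying carefully that the game of Lemma~\ref{lem:essential_strategic_closure} still produces lower bounds for plays constructed in $V[G]$ against dense opens from $M[G]$ (rather than plays entirely inside the ground model), and pinning down the count of dense opens at $\kappa^+$ via the GCH-plus-closure computation above. Once that bookkeeping is in place, the lifting and the extraction of $U^+$ are standard.
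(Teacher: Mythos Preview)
Your overall plan is the standard Silver-style lifting argument and is essentially the route the paper takes: both build a descending $\kappa^+$-sequence through the tail forcing $\QQ^M_{[\kappa,j(\kappa))}$, using its strategic closure together with the fact that only $\kappa^+$ objects need to be handled. You phrase it as meeting all dense open sets of $M[G]$ and then lifting $j$; the paper more directly builds $\seq{p_i}_{i<\kappa^+}$ so that for each $\QQ_\kappa$-name $A_i$ some $(g_i,p_i)$ with $g_i\in G$ decides $\kappa\in j(A_i)$, and reads the ultrafilter off from these decisions. The bookkeeping and the role of $2^\kappa=\kappa^+$ are the same.

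The one real soft spot is your absoluteness claim. Lemma~\ref{lem:essential_strategic_closure} is a theorem about $\QQ_A$ \emph{as computed in the ambient model}: applied in $V[G]$ it would only tell you that the limit $p_{<\gamma}$ lies in $\QQ^{V[G]}_{[\kappa,j(\kappa))}$, whereas you need it in $\QQ^M_{[\kappa,j(\kappa))}$. The obstruction is not the defining clauses (Easton, cofinality, club) but simply membership in $M$: since your choice of $p_{i+1}$ depends on $G$ (through which dense set $D_i\in M[G]$ you are meeting), the sequence $\seq{p_i}_{i<\gamma}$ genuinely lives in $V[G]$ and can encode $G$, so its union need not be a set in $M$ at all. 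What does the work here is not absoluteness of the strategy but closure of $M$: the paper uses $M[G]^\kappa\subset M[G]$ in $V[G]$ to place each proper initial segment into $M[G]$, and then argues it down into $M$, so that Lemma~\ref{lem:essential_strategic_closure} can be invoked \emph{inside $M$}. You correctly flag this as the delicate step; you just need to replace the absoluteness appeal by this closure-of-$M$ argument.

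(Your last paragraph is unnecessary: you built $H$ in $V[G]$, so $j^+$ and hence $U^+$ are already definable there; there is no larger model $V[G][H]$ to pull back from.)
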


This is fairly standard, but we include a proof for completeness. 

\begin{proof}
	Let $j\colon V\to M$ be a normal ultrafilter elementary embedding with critical point $\kappa$; so $M^\kappa\subset M$. In $M$, $j(\QQ_\kappa) = \QQ_{j(\kappa)} = \QQ_\kappa\times \QQ_{[\kappa,j(\kappa))}$, where $\QQ_\kappa$ is absolute between $V$ and $M$, and $j$ is the identity on $\QQ_\kappa$. 
	
	Let $G\subset \QQ_\kappa$ be generic over $V$. Let $\seq{A_i}_{i<\kappa^+}$ be an enumeration of all $\QQ_\kappa$-names for subsets of $\kappa$. In $V[G]$ we construct a sequence $\seq{p_i}_{i<\kappa^+}$ of conditions in $\QQ^M_{[\kappa,j(\kappa))}$ (with each $p_i$ extending $\overline{p_{<i}}$), such that for some $g\in G$, $(g,p_i)$ decides (in $M$, for the forcing $\QQ^M_{j(\kappa)}$) whether $\kappa\in j(A_i)$ or not. If $k<\kappa^+$ then, as $M[G]^{\kappa}\subset M[G]$ in $V[G]$, the sequence $\seq{p_i}_{i<k}$ is in $M[G]$; and as in $M$, $\QQ^M_{[\kappa,j(\kappa))}$ is $<\kappa^+$-distributive, the sequence $\seq{p_i}_{i<k}$ is in fact in $M$, and so its limit $p_{<k}$ is also in $M$, and the construction can proceed. This defines, in $V[G]$, a $\kappa$-complete ultrafilter on $\kappa$. The point is that if $J\subset \kappa^+$ has size at most $\kappa$, and for all $i\in J$, $\left((g,p_i)\force \kappa\in j(A_i)\right)^M$ for some $g\in G$, then for $k<\kappa^+$ such that $\force_{\QQ_\kappa} A_k = \bigcap_{i\in I} A_i$, for no $g\in G$ can $(g,p_k)$ force that $\kappa\notin j(A_k)$. 
\end{proof}

If $G\subset \QQ_\kappa$ is a generic filter, we let $S = S[G]$ be the union of $\s^{-1}\{1\}$ where $(\s,F)\in G$ for some $F$.

\begin{lemma}\label{lem:diamond}
	In $V^{\QQ_\kappa}$, $S$ is stationary and $\funnyDiamond(S)$ holds. 
\end{lemma}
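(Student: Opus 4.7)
The plan is to prove both assertions by the same elementary-submodel density argument, leveraging the strategic closure of $\QQ_\kappa$ (Lemma~\ref{lem:essential_strategic_closure}) to build conditions along descending sequences of length~$\w_1$. For stationarity, fix $p\in\QQ_\kappa$ and a $\QQ_\kappa$-name $\dot C$ for a club in $\kappa$. Choose a large regular $\theta$ and an elementary submodel $M\prec H_\theta$ of size $\aleph_1$ with $\w_1\cup\{p,\dot C,\QQ_\kappa\}\subseteq M$, arranged so that $\delta:=\sup(M\cap\kappa)$ has $V$-cofinality $\aleph_1$ (and hence the same in $V^{\QQ_\kappa}$, since $\QQ_\kappa$ preserves cofinalities). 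Inside $M$, build a descending sequence $\seq{p_i}_{i<\w_1}$ starting from $p_0=p$, strategically in the sense of Lemma~\ref{lem:essential_strategic_closure}: at each stage $p_i\le\overline{p_{<i}}$ and $p_i$ meets the $i^{\textup{th}}$ dense set from a fixed enumeration of the dense subsets of $\QQ_\kappa$ in~$M$. By the lemma, $p^*:=p_{<\w_1}=(\s^*,\bar F^*)\in\QQ_\kappa$; a standard submodel argument gives that $p^*$ forces $\delta\in\dot C$; and since $\delta\notin\dom\s^*$, setting
\[
q=\left(\s^*\cup\{(\delta,1)\},\ \bar F^*\cup\{(\delta,g_\delta)\}\right)
\]
for any function $g_\delta$ on a club of $\delta$ of order type $\w_1$ with range in~$\delta$ yields $q\in\QQ_\kappa$ extending~$p$ and forcing $\delta\in\dot C\cap S$.

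For $\funnyDiamond(S)$, declare the witnessing sequence to be the generic $\seq{f_\delta:\delta\in S}$. Given names $\dot T\subseteq\kappa$ (forced unbounded), $\dot F\colon\dot T\to V_\kappa$, a name $\dot C$ for a club, and a condition $p$, repeat the above construction with all these data put into $M$, obtaining $\delta=\sup(M\cap\kappa)$. At each successor stage of the strategic descent in~$M$, strengthen the requirement so that $p_{i+1}$ decides some $\alpha_i\in\dot T$ above $\sup_{j<i}\alpha_j$ and decides $\dot F(\alpha_i)=v_i$ for some specific~$v_i$. Since the relevant dense sets are in~$M$, the choices $p_{i+1},\alpha_i,v_i$ can all be made in~$M$. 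Then $\{\alpha_i:i<\w_1\}\subseteq M\cap\kappa$ is cofinal in~$\delta$ of order type~$\w_1$, and each $v_i\in M\cap V_\kappa\subseteq V_\delta$, because $\operatorname{rank}(v_i)\in M\cap\kappa=\delta$. Let $D$ be the closure of $\{\alpha_i\}$ in~$\delta$, a club of~$\delta$ of order type~$\w_1$, and define $f_\delta$ on~$D$ by $f_\delta(\alpha_i)=v_i$ (extended arbitrarily at the limit points). Then
\[
q=\left(\s^*\cup\{(\delta,1)\},\ \bar F^*\cup\{(\delta,f_\delta)\}\right)
\]
lies in $\QQ_\kappa$, extends~$p$, and forces $\delta\in\dot C\cap S$ together with $\dot F\rest\dom f_\delta=f_\delta$, as required. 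Since $p$ and $\dot C$ were arbitrary, the set of such $\delta$ is forced to be stationary.

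The main delicate point is harmonising strategic closure with the enrichment of each $p_i$ by new information: at limit stages of the descent the bar operation only adds zeros at limit points of $\dom\s_{<\lambda}$, all of which lie strictly below~$\delta$, so it never obstructs the later extension at $\delta$ itself; at successor stages we simply pick $p_{i+1}\le\overline{p_i}$ meeting the desired dense set. A secondary minor issue is reconciling the literal condition $\range f_\alpha\subset\alpha$ from Definition~\ref{def:Q} with the $V_\delta$-valued quantification in Definition~\ref{def:funnyDiamond}: either the $f_\alpha$ in~$\QQ$ are intended to be $V_\alpha$-valued, or one composes with a bijection $V_\delta\leftrightarrow\delta$ when coding $v_i$ as an ordinal; such a bijection is available on a stationary set of $\delta\in S$ (for instance at $\beth$-fixed points of $V$-cofinality~$\aleph_1$, abundant below the measurable~$\kappa$), which still suffices to derive $\funnyDiamond(S)$ on the full generic~$S$.
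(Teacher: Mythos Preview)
Your approach is essentially the same as the paper's: both build a strategically-closed descending $\omega_1$-sequence of conditions, at each stage deciding a new element of the club and a new value of the function, and then extend the limit condition at the supremum~$\delta$ by declaring $\sigma(\delta)=1$ and installing the appropriate~$f_\delta$. The paper carries this out directly (without an elementary submodel), proving the ordinal-valued version of $\funnyDiamond$ first and then composing with a fixed bijection $\kappa \leftrightarrow V_\kappa$; your elementary-submodel packaging accomplishes the same thing while handling the $V_\delta$-valued case in one pass.

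One point deserves care. Your claim that $p^*$ forces $\delta \in \dot C$, and that the decided witnesses $\alpha_i, v_i$ land in~$M$ (hence below~$\delta$, respectively in~$V_\delta$), requires each $p_i$ actually to lie in~$M$. At successor stages this is fine once $\overline{p_i}\in M$, but at a limit stage $\lambda<\omega_1$ you need $\overline{p_{<\lambda}}\in M$ in order to choose $p_\lambda\in M$ below it; that in turn needs the initial segment $\seq{p_j:j<\lambda}$ to be in~$M$, which a single $\aleph_1$-sized elementary submodel need not guarantee. The standard fix is to take $M=\bigcup_{i<\omega_1}M_i$ an internally approachable continuous chain (with $\seq{M_j:j\le i}\in M_{i+1}$) and build $p_i\in M_{i+1}$; alternatively one simply drops the submodel and argues directly as the paper does, letting $\alpha^*=\sup_i\alpha_i$ emerge at the end. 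Your final paragraph on the $\alpha$-versus-$V_\alpha$ coding is correct but more elaborate than necessary: a single global bijection $b\colon\kappa\to V_\kappa$ suffices, since the set of $\delta$ with $b[\delta]=V_\delta$ is club, and one defines the $\funnyDiamond$-sequence by $b\circ f_\delta$ on that club and by a dummy function elsewhere.
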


\begin{proof}
	The $\funnyDiamond(S)$ sequence is a modification of the sequence given generically by the second coordinate of the forcing conditions. Let $\seq{f_\delta}_{\delta\in S}$ be the union of of $F$ where $(\s,F)\in G$ for some $\s$. We show that in $V^{\QQ_\kappa}$, for any function $h$ from an unbounded subset of $\kappa$ to $\kappa$, there are stationarily many $\delta\in S$ such that $h\rest{\dom f_\delta} = f_\delta$. To then capture functions into $V_\kappa$ we compose the functions $f_\delta$ with some fixed bijection between $\kappa$ and $V_\kappa$. 
	
	Let~$C$ be a $\QQ_\kappa$-name for a club of $\kappa$, and let $h$ be a $\QQ_\kappa$-name for a function from an unbounded subset of $\kappa$ to $\kappa$. 
	
	 Starting from an arbitrary $p_0\in \QQ_\kappa$, we define a decreasing sequence $\seq{p_i}_{i<\w_1}$ with each $p_i$ extending $\overline{p_{<i}}$, and three increasing sequences $\seq{\alpha_i}_{i<\w_1}$, $\seq{\zeta_i}_{i<\w_1}$ and $\seq{\gamma_i}_{i<\w_1}$ such that:
	\begin{itemize}
		\item $\alpha_i < \gamma_i < \alpha_{i+1}$ and $\zeta_i < \alpha_{i+1}$;
		\item $p_i\force_{\QQ_\kappa} \alpha_i\in C$;
		\item $p_i\force_{\QQ_\kappa} \gamma_i\in \dom h \andd h(\gamma_i) = \zeta_i$. 
	\end{itemize}
	Let $\alpha^* = \sup_{i<\w_1} \alpha_i$. Define $p^*$ by extending $p_{<\w_1}$ by letting $\s^*(\alpha^*) = 1$ and $f_{\alpha^*} = \seq{\gamma_i\mapsto \zeta_i}$. Then~$p^*$ forces that~$S$ intersects~$C$ at $\alpha^*$ and that $h\rest{\dom f_{\alpha^*}}= f_{\alpha^*}$. 
		%
		%
\end{proof}

\begin{lemma}\label{lem:only_inaccessibles}
	In $V^{\QQ_\kappa}$, $S$ only reflects at inaccessible cardinals. 
\end{lemma}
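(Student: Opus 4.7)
The plan is to reduce the statement to a density argument in $\QQ_\kappa$. Fix $\gamma<\kappa$ of uncountable cofinality which is not inaccessible; I want to show that $S\cap \gamma$ is non-stationary in $V^{\QQ_\kappa}$. The key observation is that if $p=(\s,F)\in \QQ_\kappa$ satisfies ``$\dom \s$ is unbounded in $\gamma$'', then by the third bullet of Definition~\ref{def:Q} applied to the condition $p$ itself, $\s^{-1}\{0\}$ already contains a club $C$ of $\gamma$. Since extensions in $\QQ_\kappa$ extend $\s$ as a partial function, defined values are never overwritten, and so if $p\in G$ then $C\subseteq \s_G^{-1}\{0\}$, yielding $C\cap S = \emptyset$; because $\QQ_\kappa$ preserves cofinalities, $C$ is still a club in $V^{\QQ_\kappa}$. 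Hence it suffices to prove that $E_\gamma = \{p\in \QQ_\kappa : \dom(p(\s))\text{ is unbounded in }\gamma\}$ is dense.

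Given $p=(\s,F)$, the density argument splits into two cases. In the easy case $\gamma\notin \bar I$, let $\delta = \sup(\bar I\cap \gamma)$; then $\delta\in \bar I$, no inaccessible lies in $(\delta,\gamma]$, and so $\gamma<\delta^\iplus$. Now $\dom \s\cap [\delta,\delta^\iplus)$ is some initial segment $[\delta,\gamma_0)$, and (assuming $\gamma_0<\gamma$, else nothing to do) I extend to $[\delta,\gamma)$ by assigning $0$ to every new point, leaving $\s$ unchanged elsewhere. The Easton and initial-segment conditions are immediate; the new zeros supply a tail, hence a club, of any newly affected $\beta\in (\delta,\gamma]$ of uncountable cofinality inside $\s'^{-1}\{0\}$, so the non-reflection condition is preserved.

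The main obstacle is the hard case, where $\gamma\in \bar I\setminus I$ is a singular limit of inaccessibles. Here $\dom\s$ restricted to $[\gamma,\gamma^\iplus)$ cannot contribute to unboundedness in $\gamma$, so I must arrange cofinal-in-$\gamma$ additions in the intervals $[\delta,\delta^\iplus)$ for $\delta\in \bar I\cap \gamma$ while respecting the Easton bound at every inaccessible. To do this I choose an increasing continuous cofinal sequence $\langle \delta_i:i<\cf(\gamma)\rangle$ of inaccessibles below $\gamma$ with $\delta_0>\cf(\gamma)$, and extend the initial segment in each $[\delta_i,\delta_i^\iplus)$ so that it includes $\delta_i$, with value $0$. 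The Easton condition at an inaccessible $\lambda$ is verified by cases: if $\lambda\le \delta_0$ no new points lie below $\lambda$; if $\lambda>\cf(\gamma)$, then $\{i:\delta_i<\lambda\}$ has order-type at most $\cf(\gamma)<\lambda$, and cofinality of these $\delta_i$'s in $\lambda$ would force $\cf(\lambda)\le \cf(\gamma)<\lambda$, contradicting regularity. The initial-segment condition holds by construction. Finally, the non-reflection condition at $\gamma$ is witnessed by the club $\{\delta_i\}_{i<\cf(\gamma)}\subseteq \s'^{-1}\{0\}$; for any other $\beta<\gamma$ of uncountable cofinality in which $\dom\s'$ becomes newly unbounded, $\beta$ must be a limit of $\delta_i$'s, and the corresponding tail again supplies a club of $\beta$ inside $\s'^{-1}\{0\}$.

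The expected source of trouble is precisely this second case: the tension between making $\dom\s$ cofinal in $\gamma$ and preserving the Easton bound at each inaccessible below $\gamma$. The twin choices of (i) starting the cofinal sequence above $\cf(\gamma)$, and (ii) taking it continuous, are what simultaneously supply the Easton estimate and the club required by the non-reflection clause, closing the argument.
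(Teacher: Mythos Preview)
Your argument follows exactly the paper's approach: reduce to showing that the set of conditions with $\dom\s$ unbounded in~$\gamma$ is dense, split into the cases $\gamma\notin\bar I$ (extend on an interval) and $\gamma\in\bar I\setminus I$ (extend on a club of order-type $\cf(\gamma)$). The paper's proof is a one-sentence sketch; yours supplies the details it omits.

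One small slip to fix: you write ``an increasing continuous cofinal sequence $\langle\delta_i:i<\cf(\gamma)\rangle$ of inaccessibles''. Since $\cf(\gamma)$ is uncountable, at any limit $j<\cf(\gamma)$ continuity forces $\cf(\delta_j)=\cf(j)\le j<\cf(\gamma)<\delta_0\le\delta_j$, so $\delta_j$ is singular and cannot be inaccessible. What you actually need (and what the rest of your argument uses) is that each $\delta_i\in\bar I$, so that the interval $[\delta_i,\delta_i^\iplus)$ is available and you may place $\delta_i$ into $\dom\s'$; this is fine since $\bar I$ is closed. With that correction your Easton verification goes through unchanged (the key point, which you state correctly, is that an inaccessible $\lambda\le\gamma$ cannot be a limit of the $\delta_i$'s, as that would give $\cf(\lambda)\le\cf(\gamma)<\lambda$), and your club $\{\delta_i:i<\cf(\gamma)\}\subseteq(\s')^{-1}\{0\}$ witnesses non-reflection at $\gamma$ and at any $\beta$ that is a limit of the $\delta_i$'s.
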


\begin{proof}
	Let $\alpha<\kappa$ of uncountable cofinality be accessible in $V^{\QQ_\kappa}$; then it is accessible in $V$ as well. Let $p = (\s,F)\in \QQ_\kappa$. If $\dom \s\cap \alpha$ is unbounded in $\alpha$, then since $\s^{-1}\{0\}$ contains a club of $\alpha$, $p$ forces that $S$ does not reflect at $\alpha$. The collection of condition $p = (\s,F)$ such that $\dom \s\cap \alpha$ is unbounded in $\alpha$ is dense in $\QQ_\kappa$; we can always extend any given condition by sufficiently many zeros (on an interval if $\alpha$ is not a limit of inaccessible cardinals, and on a club of $\alpha$ of order-type $\cf(\alpha)$ otherwise). 
\end{proof}

\

Our next step is to extend $V^{\QQ_\kappa}$ to a model in which $j(S)$ reflects no-where in $(\kappa,j(\kappa)]$. As this is mirrored below $\kappa$, we look at $\lambda<\kappa$ first. 

\medskip

We work in $V$. Let $\lambda<\kappa$. We let $R_{\lambda}$ be the collection of all functions $C$ whose domain is $I\cap (\lambda,\kappa]$ such that for each $\delta\in \dom C$, $C(\delta)\subseteq (\lambda,\delta)$ is a closed set, bounded below $\delta$ (so $\sup C(\delta)\in C(\delta)$). We order $R_\lambda$ coordinatewise by end extension. 
%
%
We define a $\QQ_{[\lambda,\kappa)}$-name $\SSS_\lambda$ for an upward-closed subset of $R_\lambda$. This name consists of the pairs $((\s,F),C)\in \QQ_{[\lambda,\kappa)}\times R_\lambda$ such that for all $\delta\in I\cap (\lambda,\kappa]$, $C(\delta) \subseteq \s^{-1}\{0\}$. Note that since $\QQ_\kappa = \QQ_\lambda\times \QQ_{[\lambda,\kappa)}$, $\SSS_\lambda$ is also a $\QQ_\kappa$-name. 

%
%

\begin{proposition}\label{prop:S1}
	In $V^{\QQ_\kappa\starr \SSS_\lambda}$, $S$ reflects no-where in the interval $(\lambda,\kappa]$. 
\end{proposition}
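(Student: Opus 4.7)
The plan is to show that in $V^{\QQ_\kappa \starr \SSS_\lambda}$, for every $\delta \in (\lambda, \kappa]$ of uncountable cofinality, the set $S \cap \delta$ is non-stationary in $\delta$. By Lemma~\ref{lem:only_inaccessibles}, in $V^{\QQ_\kappa}$ the set $S$ already fails to reflect at any \emph{accessible} ordinal of uncountable cofinality, and the witnessing clubs persist in the further extension since $S$ is determined at stage $\QQ_\kappa$; so it suffices to handle inaccessible $\delta \in (\lambda, \kappa]$. For such $\delta$ my candidate club is
\[
C_*(\delta) \;=\; \bigcup \bigl\{\, C(\delta) \,:\, ((q,C)) \in G \,\bigr\},
\]
where $G$ is $(\QQ_\kappa \starr \SSS_\lambda)$-generic. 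Disjointness from $S$ is immediate: every condition in $\SSS_\lambda$ forces $C(\delta) \subseteq \s^{-1}\{0\}$, so $C_*(\delta) \subseteq \s_G^{-1}\{0\}$ while $S \subseteq \s_G^{-1}\{1\}$. Closedness follows from the coordinatewise end-extension ordering on $\SSS_\lambda$: conditions in $G$ give a directed (hence linearly end-extending) chain of closed bounded sets $C(\delta)$, so any limit point $\gamma$ with $\gamma < \sup C_*(\delta)$ lies below $\sup C(\delta)$ for some single piece, hence is a limit point of that piece, hence lies in it.

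The core step is the density statement $\sup C_*(\delta) = \delta$. Given $((q_0, C_0))$ with $q_0 = (\s_0, F_0)$ and $\alpha \in (\sup C_0(\delta), \delta)$, I would build an extension $((q, C))$ containing some $\beta \in C(\delta)$ with $\beta > \alpha$. Since $\dom \s_0$ is an Easton set and $\delta \in I$, $\dom \s_0 \cap \delta$ is bounded in $\delta$; pick $\beta \in (\alpha, \delta)$ strictly above this bound. Let $\delta^{(0)} = \max(\bar I \cap [0,\beta])$, so $\beta \in [\delta^{(0)},(\delta^{(0)})^\iplus)$ and $(\delta^{(0)})^\iplus \le \delta$. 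Extend $\s_0$ to $\s$ by setting $\dom \s \cap [\delta^{(0)}, (\delta^{(0)})^\iplus) = [\delta^{(0)}, \beta+1)$ with value $0$ at every new point $\gamma \in [\gamma_0, \beta+1)$, where $\gamma_0 = \sup(\dom \s_0 \cap [\delta^{(0)}, (\delta^{(0)})^\iplus))$; leave all other intervals unchanged. Set $q = (\s, F_0)$ and let $C$ be obtained from $C_0$ by replacing $C_0(\delta)$ with $C_0(\delta) \cup \{\beta\}$. Then $C(\delta)$ is still closed, bounded, and contained in $\s^{-1}\{0\}$; all other coordinates of $C$ coincide with those of $C_0$ and so remain admissible; and $((q,C))$ extends $((q_0,C_0))$.

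The main obstacle is verifying that $q \in \QQ$, which reduces to the non-reflection clause. Suppose $\alpha'$ has uncountable cofinality and $\dom \s$ is unbounded in $\alpha'$. If $\dom \s_0$ was already unbounded in $\alpha'$, then $\s_0^{-1}\{0\} \subseteq \s^{-1}\{0\}$ already contains a club of $\alpha'$. Otherwise the new unboundedness is caused by the fresh values on $[\gamma_0, \beta+1)$, which forces $\alpha' \in (\gamma_0, \beta]$; in that case $[\gamma_0, \alpha')$ is a tail interval of $\alpha'$ contained in $\s^{-1}\{0\}$, and in particular contains a club of $\alpha'$. The initial-segment clause holds by construction, and since no new $1$'s are added to $\s$, the sequence $F$ need not be extended. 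This validates the extension, establishes density, and shows that $C_*(\delta)$ is the desired club of $\delta$ disjoint from $S$.
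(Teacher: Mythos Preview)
Your proof is correct and follows essentially the same approach as the paper: reduce to inaccessible $\delta\in(\lambda,\kappa]$ via Lemma~\ref{lem:only_inaccessibles}, take the union $C_*(\delta)$ of the generic approximations, and prove unboundedness by a density argument that pads $\s$ with zeros above $\sup(\dom\s_0\cap\delta)$. The paper dispatches the density step in one sentence (``it is easy to extend $\s$ to $\s'$ by adding sufficiently many zeros\ldots''), whereas you carry out the verification of the initial-segment and non-reflection clauses explicitly; the only cosmetic difference is that the paper passes to the factor $\QQ_{[\lambda,\kappa)}\starr\SSS_\lambda$ before running the argument, while you work directly in $\QQ_\kappa\starr\SSS_\lambda$.
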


\begin{proof}
	If $\delta\in(\lambda,\kappa]$ is an accessible ordinal of uncountable cofinality, then by Lemma~\ref{lem:only_inaccessibles}, in $V^{\QQ_\kappa}$, $S$ is disjoint from a club of $\delta$ in $V^{\QQ_\kappa}$ and hence also in any extension of $V^{\QQ_\kappa}$.
	
	Let $\delta\in (\lambda,\kappa]$ be inaccessible. As $V^{\QQ_{[\lambda,\kappa)}\starr \SSS_\lambda}\subset V^{\QQ_\kappa\starr \SSS_\lambda}$, it is sufficient to show that~$S_{[\lambda,\kappa)}$ (the generic subset of $[\lambda,\kappa)$ added by $\QQ_{[\lambda,\kappa)}$) does not reflect at~$\delta$ in $V^{\QQ_{[\lambda,\kappa)}\starr \SSS_\lambda}$. If~$G$ is a generic filter for $\QQ_{[\lambda,\kappa)}\starr \SSS_\lambda$ over~$V$ then we let~$D_\delta$ be the union of $C(\delta)$ where $(p,C)\in G$ for some~$p$. It is clear that~$D_\delta$ is disjoint from~$S_{[\lambda,\kappa)}$, and that~$D_\delta$ is a closed subset of $\sup D_\delta$. We need to show that $\sup D_\delta = \delta$. 
	
	We work in $V$. Let $\gamma<\delta$; let $((\s,F),C)$ be any condition in $\QQ_{[\lambda,\kappa)}\starr \SSS_\lambda$. Since $\delta \cap \dom \s$ is bounded below $\delta$, it is easy to extend $\s$ to $\s'$ by adding sufficiently many zeros so that $\gamma\in \dom \s'$ and extend $C$ to $C'$ so that $C'(\delta)$ intersects the interval $[\gamma,\delta)$. 	
\end{proof}

\begin{proposition}\label{prop:S2}
	In $V^{\QQ_\kappa}$, $\SSS_\lambda$ is $<\lambda^\iplus$-distributive.
\end{proposition}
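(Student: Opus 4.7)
The plan is to prove that in $V$ the two-step iteration $\QQ_{[\lambda,\kappa)}\starr \SSS_\lambda$ is $<\lambda^\iplus$-strategically closed, and then to deduce the distributivity of $\SSS_\lambda$ over $V^{\QQ_\kappa}$ by standard arguments: because $|\QQ_\lambda|\le\lambda<\lambda^\iplus$, small-forcing preservation transfers this strategic closure to $V^{\QQ_\lambda}$, and in $V^{\QQ_\lambda}$ we also have $<\lambda^\iplus$-distributivity of $\QQ_{[\lambda,\kappa)}$ itself (Lemma~\ref{lem:essential_strategic_closure}), so the quotient $\SSS_\lambda$ over $V^{\QQ_\lambda\times \QQ_{[\lambda,\kappa)}}=V^{\QQ_\kappa}$ inherits $<\lambda^\iplus$-distributivity.

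For the strategic closure I would give player II the following response to player I's move $((\s,F),C)$. Set $\tau_\delta:=\sup(\dom\s\cap\delta)+1<\delta$ for each $\delta\in I\cap(\lambda,\kappa]$, and play $((\s',F),C')$ where $\s'$ extends $\s$ by assigning value $0$ at every $\beta\in\overline{\dom\s}\setminus\dom\s$ and at every $\tau_\delta$, and $C'(\delta):=C(\delta)\cup\{\tau_\delta\}$. The result is a valid extension: at each inaccessible $\mu$, the newly added points of $\dom\s'\cap\mu$ are all bounded by $\sup(\dom\s\cap\mu)+1<\mu$ (a $\tau_\delta$ with $\delta>\mu$ and $\dom\s\cap[\mu,\delta)\ne\emptyset$ already lies at or above $\mu$); only $0$'s are added, so the cofinality-$\aleph_1$ and non-reflection clauses on $\s'$ are untouched; and $\tau_\delta>\sup C(\delta)$ gives end-extension of each $C(\delta)$.

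At a limit stage $\gamma<\lambda^\iplus$, the $\s$-coordinate is handled exactly as in Lemma~\ref{lem:essential_strategic_closure}: every inaccessible $\mu\in[\lambda,\kappa]$ has cofinality at least $\lambda^\iplus>\gamma$, so the Easton condition survives at the union, and the club-of-zeros requirement at any $\alpha$ of uncountable cofinality with $\dom\bar\s_{<\gamma}$ newly unbounded in $\alpha$ is met by the suprema that the $\bar{\phantom{x}}$-operation sends to $0$. For the $C$-coordinate, each $C_{<\gamma}(\delta)$ is bounded below $\delta$ (again $\cf\delta>\gamma$), and its supremum $\eta_\delta$ must be adjoined to restore closure. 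This is legitimate because $\eta_\delta\in\bar\s_{<\gamma}^{-1}\{0\}$: if the sequence $\sup C_i(\delta)$ stabilizes then $\eta_\delta$ already lies in some $C_{i_0}(\delta)\subseteq\s_{i_0}^{-1}\{0\}$; otherwise the $\tau_\delta$-augmentation ensures $\eta_\delta\notin\dom\s_{<\gamma}$, for $\eta_\delta\in\dom\s_j$ would give $\sup C(\delta)>\sup(\dom\s_j\cap\delta)\ge\eta_\delta$ at any player-II stage after $j$, contradicting $\eta_\delta=\sup_i\sup C_i(\delta)$, and then $\eta_\delta$ being a limit point of $\dom\s_{<\gamma}$ picks up value $0$ from the $\bar{\phantom{x}}$-operation.

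I expect the main obstacle to be precisely this coordination between the two coordinates of the iteration: a naive strategy (just close $\dom\s$ and let the $C(\delta)$-sequences drift) would fail because player I could plant a $1$ at some cofinality-$\aleph_1$ point $\eta\in(\lambda,\delta)$ and then aim the sequence of $\sup C(\delta)$'s at $\eta$, producing a limit $\eta_\delta\in S$ that blocks closure. The $\tau_\delta$-trick deflects this by forcing each player-II move to step strictly past the current reach of $\dom\s\cap\delta$, which is the minimal intervention ensuring that the new limit $\eta_\delta$ cannot have been captured by $\s^{-1}\{1\}$.
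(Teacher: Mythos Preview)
Your proposal is correct and follows essentially the same approach as the paper: both reduce to showing that $\QQ_{[\lambda,\kappa)}\starr\SSS_\lambda$ is $<\lambda^\iplus$-strategically closed in $V$, and both use the same key idea of having player~II force $\max C(\delta)$ to track $\sup(\dom\s\cap\delta)$ (the paper does this by working inside the dense set $T$ of conditions where $\dom\s$ is closed and $\max C(\delta)=\sup(\dom\s\cap\delta)$, while you describe the equivalent strategy directly via the $\tau_\delta$-augmentation). The limit arguments are then identical in spirit: the coordination guarantees that $\sup C_{<\gamma}(\delta)$ coincides with a point that the $\bar{\phantom{x}}$-operation sends to~$0$.
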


\begin{proof}
	It suffices to show that $\QQ_{[\lambda,\kappa)}\starr \SSS_\lambda$ is $<\lambda^\iplus$ distributive (in $V$). For then, since $|\QQ_\lambda|= \lambda$, it has the same property in $V^{\QQ_\lambda}$, which in turn implies that for $\gamma<\lambda^\iplus$, all sequences of ordinals of length $\gamma$ in $V^{\QQ_\kappa\starr \SSS_\lambda}$ in fact lie in $V^{\QQ_\lambda}$ and so in~$V^{\QQ_\kappa}$. 
	
		Work in $V$. Given a condition $q = (p,C)$ in $\QQ_{[\lambda,\kappa)}\starr \SSS_\lambda$, we define $\bar q = (\bar p,\bar C)$ by taking $\bar p$ as above, and defining, for each limit point $\beta$ of $\dom \s$ (where $p = (\s,F)$) which is not already in $\dom \s$, for each inaccessible $\delta > \beta$ such that $\dom \s\cap [\beta,\delta) = \emptyset$, $\bar C(\delta) = C(\delta)\cup\{\beta\}$. Since for such $\delta$ we have $C(\delta)\subseteq \beta$, $\bar C(\delta)$ is indeed an end-extension of $C(\delta)$, and it is clear that $\bar q\in \QQ_{[\lambda,\kappa)}\starr \SSS_\lambda$.
		
		We show that a dense subset of $\QQ_{[\lambda,\kappa)}\starr \SSS_\lambda$ is $<\lambda^\iplus$-strategically closed. Let $T$ be the set of conditions $((\s,F),C)\in \QQ_{[\lambda,\kappa)}\starr \SSS_\lambda$ such that:
		\begin{itemize}
			\item $\dom \s$ is a closed subset of $\kappa$; and
			\item For all $\delta\in I\cap (\lambda,\kappa]$, $\max C(\delta) = \sup (\dom \s\cap \delta)$. 
		\end{itemize}
		Extending a given condition $((\s,F),C)$ from $\QQ_{[\lambda,\kappa)}\starr \SSS_\lambda$ to a condition $((\s',F),C')$ in $T$ is not difficult; for each $\delta\in I\cap (\lambda,\kappa]$ we add $\epsilon =\sup (\dom \s\cap \delta)$ and also $\epsilon+1$ to $\dom \s'$, and let $\s'(\epsilon+1)=0$ and $\epsilon+1 \in C'(\delta)$. 
		
		Let $\gamma<\lambda^\iplus$ and that $\seq{q_i}_{i<\gamma}$ is a decreasing sequence of conditions from $T$ such that for all $i<\gamma$, $q_i$ extends $\overline{q_{<i}}$. Then $\overline{q_{<\gamma}}\in T$. 
\end{proof}

\medskip

We can now prove~\eqref{item:preparation} of Proposition~\ref{prop:main}. Starting with the measurable cardinal~$\kappa$ such that $2^\kappa = \kappa^+$, we first work in $V^{\QQ_\kappa}$. Since $\kappa$ is measurable in $V^{\QQ_\kappa}$ (Proposition~\ref{prop:measurability_preserved}), let $\mu$ be a normal ultrafilter on $\kappa$ in $V^{\QQ_\kappa}$, and let $j$ be the associated embedding from $V^{\QQ_\kappa}$ into the transitive inner model $P = \left( \Ult(V^{\QQ_\kappa},\mu) \right)$ of~$V^{\QQ_\kappa}$. Using $\mu$ to average the notions of forcing $\SSS_\lambda$ for inaccessible $\lambda<\kappa$ (again, thinking of these as elements of $V^{\QQ_\kappa}$), we see that in~$P$ there is some notion of forcing $\SSS$ such that:
\begin{itemize}
	\item in $P$, $\SSS$ is $<\kappa^\iplus$-distributive;
	\item in $P^{\SSS}$, $j(S)$ reflects no-where at the interval $(\kappa,j(\kappa)]$. 
\end{itemize}
Since $P$ is an inner model of $V^{\QQ_\kappa}$, $\SSS$ is an element of $V^{\QQ_\kappa}$. The model $W$ we are after is $V^{\QQ_\kappa\starr \SSS}$. Because in $V^{\QQ_\kappa}$, $P$ is closed under taking sequences of length $\kappa$, we see that $\SSS$ is $<\kappa^+$-distributive in $V^{\QQ_\kappa}$ as well. This means that $(V_{\kappa+1})^{\QQ_\kappa} = (V_{\kappa+1})^W$. From this we conclude:
\begin{itemize}
	\item In $W$, $S$ is a stationary subset of $\kappa$ and $\funnyDiamond(S)$ holds.
	\item $\mu$ is a normal ultrafilter on $\kappa$  in $W$ as well, and by taking the ultrapower $W^\kappa/\mu$, the embedding $j$ can be extended in $W$ to an embedding (which we also call $j$) from $W$ to an inner model $N$ of $W$. 
\end{itemize}
Finally, since $P\subset V^{\QQ_\kappa}$, $P^{\SSS}\subset W$. As non-reflection is upward absolute, $j(S)$ reflects no-where at the interval $(\kappa,j(\kappa)]$ in $W$ as well. This concludes the proof.

%
%

\end{document}